\theoremstyle{plain}
\newtheorem{thm}{\textbf{Theorem}}
\theoremstyle{definition}
\theoremstyle{remark}
\newtheorem*{rem}{\textbf{\textup{Remark}}}
\theoremstyle{plain}
\newtheorem{prop}{\textbf{Proposition}}
\theoremstyle{plain}
\newtheorem{lem}{\textbf{Lemma}}
\renewenvironment{proof}[1][\textbf {Proof.}]{\par
\pushQED{$\blacksquare$}%
\normalfont \topsep6\p@\@plus6\p@\relax
\trivlist
\item\relax
{
#1\@addpunct{.}}\hspace\labelsep\ignorespaces
}{%
\popQED\endtrivlist\@endpefalse
}
\newenvironment{qedequation*}{%
   \pushQED{\qed}%
   \mathdisplay@push
   \st@rredtrue \global\@eqnswfalse
   \mathdisplay{equation*}%
}{%
   \endmathdisplay{equation*}%
   \mathdisplay@pop
   \ignorespacesafterend
   \popQED\@endpefalse
}
\newcommand*{\defeq}{\mathrel{\vcenter{\baselineskip0.5ex \lineskiplimit0pt
                     \hbox{\scriptsize.}\hbox{\scriptsize.}}}%
                     =}
\newcommand*{\eqdef}{= \mathrel{\vcenter{\baselineskip0.5ex \lineskiplimit0pt
                     \hbox{\scriptsize.}\hbox{\scriptsize.}}}
                     }
\newcommand{\pushright}[1]{\ifmeasuring@#1\else\omit\hfill$\displaystyle#1$\fi\ignorespaces}
\newcommand{\pushleft}[1]{\ifmeasuring@#1\else\omit$\displaystyle#1$\hfill\fi\ignorespaces}
\newcommand{\RN}[1]{%
  \textup{\uppercase\expandafter{\romannumeral#1}}%
}
\newcommand{\specialcell}[1]{\ifmeasuring@#1\else\omit$\displaystyle#1$\ignorespaces\fi}
\title[Global weak solutions for the LLG-VM system]{Global weak solutions for the Landau-Lifshitz-Gilbert-Vlasov-Maxwell system coupled via emergent electromagnetic fields}
\author[Tvrtko Dorešić]{Tvrtko Dorešić$^*$}
\address{$^*$Department of Mathematics, RWTH Aachen University.}
\author[Christof Melcher]{Christof Melcher$^\dagger$}
\address{$^{\dagger}$Department of Mathematics \& JARA FIT, RWTH Aachen University.}
\email{$^*$doresic@eddy.rwth-aachen.de, $^\dagger$melcher@rwth-aachen.de}
  \date{\today}
  \keywords{Landau-Lifshitz-Gilbert equation, Vlasov-Maxwell system, global weak solutions, micromagnetics, topological solitons}
  \subjclass[2010]{35Q20, 35Q60, 35K55, 35D30}
\begin{document}
\titleformat{\section}[hang]{}{\Large\thetitle.}{0.8em}{\Large\bfseries\centering}
\titleformat{\subsection}[hang]{}{\large\thetitle.}{0.5em}{\large\bfseries}
\titlespacing*{\subsection}{0pt}{*2}{*1.5}
\titleformat{\subsubsection}[hang]{}{\normalsize\thetitle.}{0.5em}{\normalsize\bfseries}
\titleformat{\paragraph}[runin]{}{\large\thetitle.}{0.5em}{\normalsize\bfseries}
\begin{abstract}
Motivated by recent models of current driven magnetization dynamics, we examine the coupling of the Landau-Lifshitz-Gilbert equation and classical electron transport governed by the Vlasov-Maxwell system. The interaction is based on space-time gyro-coupling in the form of emergent electromagnetic fields of quantized helicity that add up to the conventional Maxwell fields. We construct global weak solutions of the coupled system in the framework of frustrated magnets with competing first and second order gradient interactions known to host topological solitons such as magnetic skyrmions and hopfions.
\end{abstract}

\maketitle

\section{Introduction}

We are concerned with a mathematically novel and somewhat unusual combination of PDEs aiming to describe the dynamic interplay of magnetization structures and (collision-free) electric currents, which is essential for various spintronic applications. Magnetization structures are given in terms of an 
$\mathbb S^2$ valued field $\bm m=\bm m(\bm x,t)$ which is governed by a
micromagnetic interaction energy $E=E(\bm m)$, a quadratic integral functional 
that we shall specify later. We focus on a dynamic model, 
where $\bm m$ evolves according to the following Landau-Lifshitz-Gilbert equation (LLG)
\begin{equation}\label{eq:abstract_LLG}
    \partial_t \bm m +  (\bm j \cdot \nabla) \bm m 
    = \bm m \times \left( \alpha \, \partial_t \bm m  - \bm h_{\rm eff} \right) \, .
\end{equation}
Here $\alpha>0$ is the Gilbert damping factor and $\bm h_{\rm eff}=-\frac{\delta E}{\delta \bm m}(\bm m)$ is the effective field induced by $\bm m$. The dynamics is
driven by a current $\bm j= \bm j(\bm x,t)$ of electrons whose spins are assumed to adiabatically align with the local magnetization direction $\bm m$, giving rise to a spin current $\bm Q = \bm j \otimes \bm m$.  The divergence of $\bm Q$ perpendicular to $\bm m$, featured in \eqref{eq:abstract_LLG}, is called adiabatic spin-transfer torque \cite{ZhangLi2004}.

In the absence of $\bm j$, LLG is a hybrid of heat and Hamiltonian flow of $E$. The spin-transfer torque has the form of a transport term. In a simplified approach it is assumed that $\bm j$ is constant. Due to the hybrid structure, however, the term cannot be eliminated by means of a simple Galilean transformation. In models of current driven domain walls it is customary to include non-adiabatic spin-transfer terms $\beta \bm m \times (\bm j \cdot \nabla) \bm m$ with an additional parameter $\beta$. Existence and well-posedness results for this so-called Landau-Lifshitz-Slonczewski equation have been derived in \cite{MelcherPtashnyk2013, DoeringMelcher2017}. 

In the context of topological phases on very small scales the interplay of electron currents and magnetization structures becomes more complex and multifaceted, calling for a more precise description that takes into account mutual interactions, i.e. the counter-effect of magnetization structures 
on the electron flow. Electron transport is described in term of an electron distribution function $f=f(t,\bm x,\bm v)$ depending on time $t$, position $\bm x\in \mathbb R^3$ and velocity $\bm v \in \mathbb R^3$ so that the current is obtained as the first velocity moment 
$\bm j =q \int_{\mathbb R^3} \bm v f \, d\bm v$. Ignoring collisions, distribution functions generally evolve according to Vlasov equations
\begin{equation*}
    \partial_t f + \bm v \cdot \nabla_{\bm x} f + \bm F \cdot \nabla_{\bm v} f =0 \, ,
\end{equation*}
where $\bm F=q (\bm E + \bm v \times \bm B)$ with $q=-1$ is the Lorentz force induced by electromagnetic fields $\bm E$ and $\bm B$ that satisfy Maxwell's equations in a self-consistent way, giving rise to the Vlasov-Maxwell system, see \cite{Schmeiser1990,Juengel2009} for a detailed discussion in the context of semiconductors.\\
Short-time existence and uniqueness of classical solutions to the Vlasov-Maxwell system have been proven by Wollman in \cite{Wollman1984}
based on a generalization of a general local existence result for quasilinear symmetric hyperbolic systems \cite{Kato1975}.
Global existence of a weak solution with has been obtained by DiPerna and Lions in \cite{DiPernaLions1989} based on a regularization procedure, velocity averaging and the method of renormalization. \\

Coupling to the magnetization structure is based on a recent physical observation in connection with Hall effects in the presence of nontrivial topologies, namely the emergence of virtual electromagnetic fields $\bm e$ and $\bm b$, which contribute to the conventional Maxwell fields on the level of electron transport with a modified Lorentz force 
\begin{equation} \label{eq:total_Lorentz}
\bm F=q \left(\bm E + \bm e + \bm v \times (\bm B + \bm b)\right),
\end{equation}
\cite{Schulz2012, Nagaosa2013}. The emergent fields are derived from the evolving magnetization field $\bm m$, see below. 
The resulting system will be called Landau-Lifshitz-Gilbert-Vlasov-Maxwell system (LLG-VM). A key property of this system is the following energy-dissipation law
\begin{equation} \label{eq:abstract_energy_law}
     \alpha \int_0^T  \|\partial_t \bm m\|^2_{L^2} \, dt + \Big[ \mathbb{E}(f,\bm E, \bm B, \bm m)\Big]_{t=0}^T \le 0 \, ,
\end{equation}
for the total energy
\begin{equation} \label{eq:abstract_energy}
\mathbb{E}(f,\bm E, \bm B, \bm m) = 
      \int_{\mathbb R^3} \int_{\mathbb R^3}  \frac{|\bm v|}{2}^2 f \, d\bm v \, d\bm x +    \frac{1}{2}\left( \varepsilon_r \, \|\bm E\|^2_{L^2} + \frac{1}{\mu_r} \, \|\bm B\|^2_{L^2} \right) + E(\bm m) \, ,
\end{equation}
valid for sufficiently regular solutions and weak limits. Here $\varepsilon_r \geq 1$ and $\mu_r \geq 1$ represent relative permittivity and permeability constants, respectively. Notably, there in no explicit dependence on the emergent fields $\bm e$ and $\bm b$, which indicates that this coupling is indeed natural. Depending on the choice of $E=E(\bm m)$, the resulting a priori bounds are the basis of the our global existence result
for (weak) solutions
\[
(f,\bm E, \bm B, \bm m) \in L^\infty((0,\infty);\{\mathbb{E}< \infty\})
\quad \text{and} \quad \frac{\partial \bm m}{\partial t} \in L^2((0, \infty) \times \mathbb R^3) \, ,
\]
under further requirements on the initial data and with further specifications on the regularity.
To this end we shall focus on a small scale  model for a frustrated magnet that takes into account second order gradient terms\begin{align}\label{eq:frustratedmagnet}
E(\bm m) = \frac{1}{2} \int_{\mathbb{R}^3}  |\nabla^2 \bm m|^2  -  |\nabla \bm m|^2 +  h |\bm m - \bm{\hat {e}}_3|^2 d \bm x.
\end{align}
The first and second order gradient terms account for competing nearest-neighbor ferromagnetic and higher-neighbor antiferromagnetic interaction, respectively. The last term accounts for the interaction with a Zeeman field pointing in the $\bm{\hat {e}}_3$ direction.
In the ferromagnetic regime with $h>1/4$
so that $E(\bm m) \gtrsim \|\bm m - \bm{\hat{e}}_3\|_{H^2}^2$,
such models are known to host topological solitons in space dimension $d=2,3$, magnetic skyrmions and hopfions, respectively, see e.g. \cite{Sutcliffe2017}. A key analytical consequence of a second order gradient term is that LLG behaves subcritical with respect to the energy $E(\bm m)$, i.e., concentration effects are ruled out by the fundamental energy law \eqref{eq:abstract_energy_law}. Moreover, the topology of the field $\bm m$ is preserved under a flow that exhibits the space-time bounds provided by  \eqref{eq:abstract_energy_law}. Finally, the resulting regularity of the emergent fields $\bm e$ and $\bm b$ facilitate the compactness arguments for the transport equation.

Related systems arising in connection with domain wall motion and magnetic switching in multi-layers has been developed amd examined in \cite{Chen2015, Chen2016, Chai2018}. Starting from Schr\"odinger-Poisson equations for spinors, the semiclassical mean-field limit yields a Vlasov-Poission equation for the associated Wigner function coupled to the Landau-Lifshitz-Gilbert.
The coupling is realized by means of a spin transfer term in the effective field, which is induced by Pauli projections, and a source term in the Wigner equation, respectively, rather than an adiabatic spin-transfer torque and emergent electromagnetic fields as in our case. Conventional magnetostatic stray-fields playing a particular role for conventional micromagnetic structures such as domain walls are taken into account as lower order perturbation on the level of LLG. \\
Here our focus is on models to describe the transport magnetic topological solitons occurring on very small scales. Stray-fields are less relevant in this regime and are neglected in our discussion, focussing on the new difficulties due to the lack of regularizing properties of the full Maxwell equations in combination with emergent electromagnetism and topology that we now discuss in more detail.

\subsection*{Emergent electromagnetism and topology}\label{subsection:emergentfields}
A smoothly evolving magnetization field $\bm m$
induces a space-time vorticity, i.e., a two-form $\omega= \frac 1 2 \, \omega_{\mu \nu} \, dx_\mu \wedge dx_\nu$ with components
\begin{align*} 
\omega_{\mu \nu}= \bm m \cdot \left( \partial_\mu \bm m \times \partial_\nu \bm m \right)\, , 
\end{align*}
for space-time indices $0 \le \mu, \nu \le 3$ so that $\partial_0 = \partial_t$. The two-form $\omega$ is the pull-back $\bm m^\ast \omega_{\mathbb{S}^2}$
of the standard volume form $\omega_{\mathbb{S}^2}$ on $\mathbb{S}^2$ by $\bm m$, see e.g. \cite{KurzkeMelcherMoser2011, KurzkeMelcherMoserSpirn2010}.  
It follows that $\omega$ is closed so that Bianchi's identity 
holds true for $0 \le \mu, \nu, \sigma  \le 3$:
\begin{align*}
\partial_\sigma  \omega_{\mu \nu} +\partial_\nu  \omega_{\sigma \mu } +\partial_\mu  \omega_{\nu \sigma} =0 \, .
\end{align*}
In the spirit of the Faraday form from electromagnetism, the decomposition
\begin{align*}
\omega =e_i  \, dx_i \wedge dt +  \frac 1 2 \omega_{jk} \, dx_j \wedge dx_k,
\end{align*}
for spatial indices $1 \le i,j,k \le 3$, gives rise to the previously mentioned
emergent electromagnetic fields $\bm e$ and $\bm b$ with components
\begin{align} \label{eq:emergent_fields}
b_i = \frac{1}{2}\epsilon_{ijk} \, \bm m \cdot  (\partial_j \bm m \times \partial_k \bm m)  \quad \text{and} \quad e_i = \bm m \cdot  (\partial_i \bm m \times \partial_t \bm m).
\end{align}
According to the Bianchi identity the emergent fields satisfy the homogeneous Maxwell equations (Gau{\ss} law for magnetism and Faraday's law)
\begin{align*}
\nabla \cdot \bm b =0 \quad \text{and} \quad \frac{\partial \bm b}{\partial t} + \nabla \times \bm e =0. 
\end{align*}
The emergent electromagnetism bears physical relevance as it captures the interplay between the magnetization structure and electric currents.
Charge $q$ particles traversing at velocity $\bm v$ and momentum $\bm p$ experience an additional Lorenz force $q \left( \bm e + \bm v \times \bm b\right)$, giving rise to the total force $\bm F$ in \eqref{eq:total_Lorentz} such that $\frac{d \bm p}{dt}=\bm F$. There is no universal counter part to the inhomogeneous Maxwell equations on the level of emergent fields.

\medskip

An intriguing new feature compared to conventional electromagnetism is quantization, which gives certain localized magnetization structures the character of charged particles. Such topological solitons in magnetism are classified according to their dimension as skyrmions and hopfions, respectively.
The Gau{\ss} law $\nabla \cdot \bm b=0$ gives rise to a vector potential $\bm a$ such that $\bm b = \nabla \times \bm a$. The scalar product
$\bm a \cdot \bm b$ is the emergent magnetic helicity density. Under suitable decay conditions $\bm m \to \bm{\hat e}_3$ as $|\bm x| \to \infty$, the helicity integral exists and is quantized 
\begin{align*} 
 \int_{\mathbb R^3} \bm a \cdot \bm b \; d \bm x =(4 \pi)^2 \, H(\bm m) \, ,
\end{align*}
where $H(\bm m) \in \mathbb{Z}$ is the Hopf invariant associated to $\bm m$,
considered as a continuous map from the compactification $\mathbb{S}^3$.
The Hopf invariant is a homotopy invariant and describes the topology of the field $\bm m$ in terms of the linking number of two generic fibers of $\bm m$. 
Moreover, the flux $\bm b$ through a hyperplane, say $\mathbb{R}^2$ is also quantized
\begin{align*} 
 \int_{\mathbb R^2} b_3 \; dx =4 \pi \, Q(\bm m) \, , 
\end{align*}
where $Q(\bm m)\in \mathbb Z$ is the Brouwer degree or skyrmion number associated to $\bm m =\bm m|_{\mathbb R^2}$, considered as a continuous map from the compactification $\mathbb{S}^2$. The invariants $Q$ and $H$ are used for topological classification of localized structures in magnetism.

\medskip


 The mathematical theory of topological solitons in magnetism has mainly been developed in the context of two-dimensional structures. The so-called chiral skyrmions have been predicted to exist 20 years before \cite{Bogdanov1989} and owe their stability to the spin-orbit effects (Dzyaloshinskii-Moriya interaction), see e.g. \cite{Melcher2014, LiMelcher2017} for a mathematical account. An alternative stabilization mechanism is based on magnetic frustration, i.e., alternating ferromagnetic and anti-ferromagnetic interaction in a Heisenberg lattice, see e.g.
 \cite{Meyer2019}. A continuum theory for frustrated magnets 
 including \eqref{eq:frustratedmagnet}
 is derived in \cite{Lin2016} and is shown to support two-dimensional skyrmions \cite{Lin2016} as well as their three-dimensional topological counterparts (hopfions) \cite{Sutcliffe2017, Rybakov2019}. Here we focus on this three-dimensional case. Hopfion dynamics have been recently studied in chiral magnets \cite{Wang2019} and frustrated magnets \cite{Liu2020}. The model considered in this work is the combined system of equations of motion that has been suggested in \cite{Nagaosa2013} for the case of magnetic skyrmions.
 
\subsection*{Main result}\label{subsection:LLGVM}
The complete Landau-Lifshitz-Gilbert-Vlasov-Maxwell (LLG-VM) system for the magnetization field $\bm m=\bm m (\bm x, t)$,
the distribution function $f=f(\bm x, \bm v, t)$ and the 
electromagnetic fields
$\bm E=\bm E(\bm x, t)$ and $\bm B=\bm B(\bm x, t)$ with
$(\bm x, \bm v, t) \in \mathbb{R}^3 \times \mathbb{R}^3 \times (0, \infty)$ has the following explicit form:
The Landau-Lifshitz-Gilbert equation
\begin{equation}\label{eq:LLG}
    \partial_t \bm m = \bm m \times ( \alpha\, \partial_t \bm m + \Delta \bm m + \Delta^2 \bm m-h\, \bm{\hat{e}}_3) - (\bm j \cdot \nabla) \bm m,  
\end{equation}
inducing emergent fields $\bm e$ and $\bm b$ given by
\eqref{eq:emergent_fields},
is coupled to the Vlasov-Maxwell system
\begin{equation} \label{eq:Vlasov-1}
\partial_t f + \bm v \cdot \nabla_x f - (\bm E + \bm e + \bm v \times(\bm B + \bm b)) \cdot \nabla_v f =0 \, ,
\end{equation}
with the homogeneous Maxwell equations
\begin{equation}
\partial_t \bm B + \nabla \times \bm E = 0 
\quad \text{and} \quad \nabla \cdot \bm B = 0 \label{eq:Maxwell4} \, ,
\end{equation}
and the inhomogeneous Maxwell equations
\begin{equation}
 \partial_t \bm D -  \, \nabla \times \bm H = - \bm j
 \quad \text{and} \quad \nabla \cdot \bm D = \rho \label{eq:Maxwell-1} \, ,
\end{equation}
with constitutive laws in terms of relative permittivity and permeability $\varepsilon_r \geq 1$ and $\mu_r \geq 1$ 
\begin{equation}
\bm D = \varepsilon_r \bm E \quad \text{and}
\quad \bm B = \mu_r \bm H \, ,
\end{equation}
and current and charge densities
\begin{equation*}
    \bm j =- \int_{\mathbb R^3} \bm v f \, d\bm v \quad \text{and} \quad \rho = -\int_{\mathbb R^3} f \, d\bm v. \end{equation*}

\subsubsection*{Notation and function spaces}
By $\mathscr{D}(\mathbb R^3)$ we denote the space of infinitely smooth functions with compact support, and by $\mathscr{D}'(\mathbb R^3)$ the 
space of Schwartz distributions.
For any $s\in\mathbb R$ and $1<p<\infty$, with $W^{s,p}(\mathbb R^N)$ we denote the fractional Sobolev space
\begin{equation*}
    W^{s,p}(\mathbb R^N) \defeq \big( I-\Delta \big)^{-s/2}L^p(\mathbb R^N) \, .
\end{equation*}
 In the case $p=2$ we let $H^s(\mathbb R^N)\defeq W^{s,2}(\mathbb R^N).$ We refer to \cite{Taylor3} for the definition on domains and other properties. In the formulation of the Theorems we use the following notation
\begin{equation*}
    H^s(\mathbb R^3 ;\mathbb S^2) \defeq \{ \bm m :\mathbb R^3 \rightarrow \mathbb S^2 : \bm m - \bm{\hat {e}}_3 \in H^s(\mathbb R^3;\mathbb R^3) \} \, .
\end{equation*}
With $C>0$ we always denote a generic constant that may change from line to line.
With $\langle \cdot,\cdot \rangle$ we denote the $L^2$ scalar product with respect to the spatial varible $\bm x$.
With $B_R$ we denote an open ball in $\mathbb R^3$ with radius $R>0$.
With $\mathcal{F}$ we denote the Fourier transform.

\subsubsection*{Global weak solutions of the LLG-VM system}

We are concerned with global existence of distributional solutions for initial data in the energy space. Under some further integrability properties on the initial distribution
function we obtain the following existence result:
\begin{thm}\label{thm:glavni}
Let $\bm m_0 \in H^2 (\mathbb R^3;\mathbb S^2)$ and $h>1/4\,$. Let $f_0\in L^1\cap L^r(\mathbb R^3 \times \mathbb R^3)$ for $r>3$ be non-negative and satisfy
\begin{equation*}
    \int\int_{\mathbb R^3 \times \mathbb R^3} f_0 |\bm v|^2 \, d\bm x \, d\bm v < \infty.
\end{equation*}
Let $\bm E_0,\bm B_0 \, \in L^2(\mathbb R^3; \mathbb R^3)$ satisfy the following compatibility condition:
\begin{equation*}
    \nabla \cdot \bm E_0 = - \int_{\mathbb R^3} f_0 \, d\bm v\, , \quad \nabla \cdot \bm B_0 = 0 \quad \text{in} \quad \mathscr{D}'(\mathbb R^3) \, .
\end{equation*}
Then there exist
\begin{gather*}
\bm m \in L^\infty((0,\infty);H^2 (\mathbb R^3;\mathbb S^2)) \cap \dot H^1((0,\infty);L^2(\mathbb R^3;\mathbb R^3)) \, , \\
f\in L^\infty((0,\infty) \, ;L^1 \cap L^r ( \mathbb R^3 \times \mathbb R^3))\, , \\
\bm E, \bm B \in L^\infty((0,\infty) ; L^2(\mathbb R^3;\mathbb R^3)) \, , 
\end{gather*}
which satisfy the LLG-VM system \eqref{eq:LLG}-\eqref{eq:Maxwell4} in the sense of distributions such that
\begin{alignat*}{3}
    &\hphantom{\bm E,}\bm m \in C([0,\infty)\, ;\, H^s(\mathbb R^3 \,;\mathbb S^2)) \quad &&\text{for all } s<2 \, , \\
   &f \in C([0,\infty)\, ; \,W_{loc}^{-s,\,p}(\mathbb R^3 \times \mathbb R^3)) \quad &&\text{for all } s>0 \, , \; p\in [3r/(2r+3),\, 3] \, , \\
   &\bm E,\, \bm B \in C([0,\infty)\, ;\, H^{-s}_{loc}(\mathbb R^3\,;\mathbb R^3)) \quad &&\text{for all } s>0 \, ,
\end{alignat*}
where $f$ is non-negative and $f|_{t=0} = f_0,\ \bm E|_{t=0}=\bm E_0, \ \bm B|_{t=0}=\bm B_0,\ \bm m|_{t=0} = \bm m_0$.
\end{thm}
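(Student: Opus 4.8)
The plan is to realize $(\bm m,f,\bm E,\bm B)$ as a weak limit of solutions to a regularized coupled system, the point being that the energy--dissipation law \eqref{eq:abstract_energy_law} is engineered to hold for the full LLG-VM system and that its right-hand side produces precisely the a priori bounds listed in the statement. For $\ve>0$ I would mollify in $\bm x$ the fields $\bm E,\bm B,\bm e,\bm b$ wherever they multiply $\nabla_{\bm v}f$ in \eqref{eq:Vlasov-1}, mollify $f$ in the velocity moments defining $\bm j$ and $\rho$, and regularize the fourth order equation \eqref{eq:LLG} (keeping the drift coefficient mollified) along the lines of the existence theory for Landau--Lifshitz(--Slonczewski) type equations \cite{MelcherPtashnyk2013, DoeringMelcher2017}, e.g. by combining a Ginzburg--Landau penalization of the constraint $|\bm m|=1$ with a mollification. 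For the regularized problem, local-in-time existence is standard — characteristics for the Vlasov equation with smooth fields, linear theory for Maxwell, semilinear higher order parabolic theory for the Landau--Lifshitz form $(1+\alpha^2)\partial_t\bm m=-\alpha\Delta^2\bm m+\cdots$ — and a fixed point closes the coupling on a short interval. Choosing the mollifiers symmetrically, the regularized system still obeys \eqref{eq:abstract_energy_law} up to errors vanishing as $\ve\to0$; using $E(\bm m)\gtrsim\|\bm m-\bm{\hat{e}}_3\|_{H^2}^2$ for $h>1/4$ this yields, uniformly in $\ve$: $\bm m_\ve-\bm{\hat{e}}_3$ bounded in $L^\infty((0,\infty);H^2)$ and $\partial_t\bm m_\ve$ bounded in $L^2((0,\infty)\times\mathbb R^3)$; $\bm E_\ve,\bm B_\ve$ bounded in $L^\infty((0,\infty);L^2)$; $\int\int|\bm v|^2 f_\ve$ bounded in $L^\infty_t$; and $f_\ve$ bounded in $L^\infty((0,\infty);L^1\cap L^r)$, the last because all $L^p$ norms of $f_\ve$ are conserved by a transport equation whose (mollified) force is divergence free in $\bm v$. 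These bounds being global in time, the approximate solutions exist for all time, and the moment interpolation lemma gives $\bm j_\ve,\rho_\ve$ bounded in $L^\infty((0,\infty);L^{m})$ for some $m=m(r)>1$, the strict inequality $r>3$ providing the integrability margin needed below.

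Next I would extract limits and pass to the limit. From the $H^2\cap\dot H^1$ bounds, Aubin--Lions--Simon gives $\bm m_\ve\to\bm m$ strongly in $C([0,T];H^s_{loc}(\mathbb R^3))$ for every $s<2$, hence $\nabla\bm m_\ve\to\nabla\bm m$ strongly in $L^q_tL^p_{x,loc}$ for all $p<6,\ q<\infty$ (and bounded in $L^\infty_tL^6_x$), while $\partial_t\bm m_\ve\rightharpoonup\partial_t\bm m$ weakly in $L^2$. Consequently the emergent magnetic field $\bm b_\ve$, quadratic in $\nabla\bm m_\ve$, converges strongly in $L^{q}_tL^{p}_{loc}$ and is bounded in $L^\infty_tL^3_x$, whereas the emergent electric field $\bm e_\ve$, linear in $\partial_t\bm m_\ve$ and hence only bounded in $L^2_tL^{3/2}_x$, converges merely weakly to $\bm e$; this suffices because in the weak form of \eqref{eq:Vlasov-1} the fields $\bm E,\bm e$ are paired, after integrating $\nabla_{\bm v}$ by parts, with velocity averages $\int f\,\psi(\bm v)\,d\bm v$, which are strongly precompact by the velocity averaging lemma à la DiPerna--Lions \cite{DiPernaLions1989}: the transport structure of the regularized Vlasov equation together with the uniform bounds on $f_\ve$ and on the mollified force $\bm F_\ve=-(\bm E_\ve+\bm e_\ve+\bm v\times(\bm B_\ve+\bm b_\ve))$ gives strong $L^p_{loc}$ compactness of all velocity-truncated moments of $f_\ve$, and, controlling the high-velocity tail by the uniform kinetic energy bound, strong convergence $\bm j_\ve\to\bm j=-\int\bm v f\,d\bm v$ and $\rho_\ve\to\rho$ in $L^p_{loc}$ for some $p>1$. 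Finally $\bm E_\ve,\bm B_\ve$ have weak-$\ast$ limits $\bm E,\bm B$ in $L^\infty(L^2)$.

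It then remains to identify the limiting system. Passing to the limit in the Maxwell equations \eqref{eq:Maxwell4}--\eqref{eq:Maxwell-1} is immediate by linearity once $\bm j_\ve\to\bm j,\ \rho_\ve\to\rho$ strongly; the Gauss-law constraints $\nabla\cdot\bm B=0$ and $\nabla\cdot\bm D=\rho$ hold in $\mathscr{D}'$ for all times, propagating from the compatibility conditions on the data via the continuity equation $\partial_t\rho+\nabla\cdot\bm j=0$, and the time regularity $\bm E,\bm B\in C([0,\infty);H^{-s}_{loc})$ together with the attainment of $\bm E_0,\bm B_0$ follows by interpolating the $L^\infty(L^2)$ bound against $\partial_t\bm E,\partial_t\bm B\in L^\infty_t(H^{-1}_{loc})$. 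In \eqref{eq:Vlasov-1} every term is of ``strong $\times$ weak'' type by the above, $f\ge0$ is inherited, and $f\in C([0,\infty);W^{-s,p}_{loc})$ with $f|_{t=0}=f_0$ follows from the equation. In \eqref{eq:LLG} the one delicate term is $\bm m_\ve\times\Delta^2\bm m_\ve$, handled by integrating by parts twice against $\bm\psi\in\mathscr{D}$ and using $\Delta\bm m_\ve\times\Delta\bm m_\ve=0$ to write $\langle\bm m_\ve\times\Delta^2\bm m_\ve,\bm\psi\rangle=\int_{\mathbb R^3}\Delta\bm m_\ve\cdot\big(\Delta\bm\psi\times\bm m_\ve+2\sum_{i=1}^{3}\partial_i\bm\psi\times\partial_i\bm m_\ve\big)\,d\bm x$, a pairing of $\Delta\bm m_\ve\rightharpoonup\Delta\bm m$ (weak $L^2$) with strongly convergent factors; the remaining terms $\bm m_\ve\times\partial_t\bm m_\ve$, $\bm m_\ve\times\Delta\bm m_\ve$, $\bm m_\ve\times\bm{\hat{e}}_3$ and the drift $(\bm j_\ve\cdot\nabla)\bm m_\ve$ are again products of one strongly and one weakly (or two strongly) convergent factor in dual spaces, the drift using $\bm j\in L^\infty_tL^{m}_x$ against $\nabla\bm m\in L^\infty_tL^6_x$. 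The penalty term, bounded by the energy and hence vanishing, forces $|\bm m|=1$ a.e.; and $\bm m\in C([0,\infty);H^s(\mathbb R^3;\mathbb S^2))$ for all $s<2$ follows from $\bm m\in L^\infty(H^2)\cap\dot H^1(L^2)$ — which gives $\bm m\in C_w(H^2)\cap C(L^2)$ — by interpolation, with $\bm m|_{t=0}=\bm m_0$.

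The main obstacle is the Vlasov step. Because the emergent electric field $\bm e$ inherits only the $L^2$-in-time, $L^{3/2}$-in-space integrability of $\partial_t\bm m$ and converges merely weakly, one must verify with care that the regularized force $\bm F_\ve$ meets the hypotheses of the velocity averaging lemma despite these anisotropic, barely-integrable bounds, and that the resulting compactness of $\bm j_\ve$ is strong in a space good enough both to serve as a source in Maxwell and to pass to the limit in the drift term of \eqref{eq:LLG} — which is exactly where the subcriticality furnished by the fourth order energy \eqref{eq:frustratedmagnet} (no concentration, $\nabla\bm m\in L^\infty_tL^6_x$) and the hypothesis $r>3$ are needed to leave enough room for the Hölder pairings to close. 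A secondary, purely technical, point is to arrange the joint regularization so that the exact energy identity behind \eqref{eq:abstract_energy_law} — in which the work of the adiabatic spin-transfer torque against $\bm h_{\rm eff}$ cancels the electromagnetic energy flux — degrades only by terms vanishing with $\ve$.
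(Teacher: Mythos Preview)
Your proposal is correct and follows essentially the same strategy as the paper: regularize, obtain global approximate solutions via the energy-dissipation law, and pass to the limit using Aubin--Lions for $\bm m$, weak compactness for $\bm E,\bm B$, and DiPerna--Lions velocity averaging for $f$; you also correctly single out the Vlasov step and the weak convergence of $\bm e\in L^2_tL^{3/2}_x$ as the main obstacle, with $r>3$ providing the required margin.

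The one noteworthy difference is the regularization of the LLG part. The paper does \emph{not} penalize the constraint: it keeps $|\bm m^\ve|=1$ exactly by first proving (their Theorem~\ref{thm:LLGglobalno}) global $H^l$ solutions to the LLG equation for smooth sphere-valued data and smooth current, and then uses these in an iteration scheme for the coupled system. The payoff is that the regularized energy identity \eqref{eq:energyestimate2} holds as an \emph{exact equality} (not merely up to $o_\ve(1)$): the same symmetric mollifier $K_\ve$ is applied to $\bm j$ in the LLG drift and in the Maxwell source, and to the full Lorentz force in the Vlasov equation, so the self-adjointness of $K_\ve$ makes $\langle \bm j,K_\ve\bm E\rangle$, $\langle \bm j,K_\ve\bm e\rangle$ cancel exactly with $\langle K_\ve\bm j,\bm E\rangle$, $\langle K_\ve\bm j,\bm e\rangle$. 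A Ginzburg--Landau relaxation would spoil the identity $|\bm m^\ve\times\partial_t\bm m^\ve|^2=|\partial_t\bm m^\ve|^2$ (used when testing with $\bm m^\ve\times\partial_t\bm m^\ve$) and hence the clean dissipation term; it can be made to work, but one then has to check carefully that the approximate energy law still yields bounds uniform in time at fixed $\ve$, which is what you flag at the end as the ``secondary technical point''. The paper's choice sidesteps this entirely.
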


\paragraph{Remarks.}

\begin{enumerate}
\item The magnetization field $\bm m$ obtained in Theorem \ref{thm:glavni} is, by virtue of Sobolev embedding, continuous in space-time and therefore topology preserving. Alternatively, it can be shown that 
$t \mapsto H(\bm m(t)) \in \mathbb{Z}$ is continuous and therefore constant, see Lemma \ref{lem:hopfion}. The model is therefore suitable to describe the current driven dynamics skyrmions and hopfions in frustrated magnets.

\item The solution $(\bm m, f, \bm E, \bm B)$ satisfies the energy inequality \eqref{eq:abstract_energy_law}. An interesting open question concerns strong continuity in the energy norm, i.e., whether \eqref{eq:abstract_energy_law} upgrades to an identity, which is unknown even for weak solutions of the Vlasov-Maxwell system alone.

\item Due to the strong regularizing effect of the governing micromagnetic energy \eqref{eq:frustratedmagnet}, the free LLG equation \eqref{eq:LLG} for $\bm j=0$ allows for regular solutions $\bm m$ up to any order.
In the coupled system, however, the regularity of $\bm m$ is limited by the regularity of $\bm j$ arising from $f$.

\item The critical regularity assumption for $f_0$ is $r=3$ in order to have integrability of the coupling term $\bm e \, f$ in the Vlasov equation. Consequently, from the velocity moment estimate in Lemma \ref{lem:jregularity} and energy-dissipation law \eqref{eq:abstract_energy_law}, $\bm j$ has the spatial regularity given in $L^{6/5}$. Interestingly, in view of Sobolev embedding, this is a mutual critical exponent for the coupling in the LLG equation $(\bm j \cdot \nabla) \bm m\,$. In contrast to \cite{DiPernaLions1989}, the proof of Theorem \ref{thm:glavni} fails in the critical case due to the lack of strong convergence of $\bm j$ in $L^{6/5}$.

\item Another open question is uniqueness, which is unknown even for weak solutions of the Vlasov-Maxwell system alone. The regularity theory for the LLG part is strong enough to obtain a partial uniqueness result, even for the critical exponent $L^{6/5}$.
\end{enumerate}

\begin{thm}\label{thm:uniqueness}
Let $\bm j \in L^\infty ((0,\infty); L^{6/5}(\mathbb R^3;\mathbb R^3))$ be fixed. Then the distributional solution to equation \eqref{eq:LLG} with regularity from the previous Theorem 
\begin{align*}
    \bm m \in L^\infty((0,\infty);H^2 (\mathbb R^3;\mathbb S^2)) \cap \dot H^1((0,\infty);L^2(\mathbb R^3;\mathbb R^3))
\end{align*}
is unique.
\end{thm}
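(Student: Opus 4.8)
Let $\bm m_1,\bm m_2$ be two distributional solutions of \eqref{eq:LLG} with the prescribed $\bm j$, in the stated class and with the same initial datum, and put $\bm w\defeq\bm m_1-\bm m_2$. Since $\partial_t\bm m_i\in L^2((0,\infty);L^2)$ and $\bm m_i-\bm{\hat e}_3\in L^\infty((0,\infty);L^2)$, one has $\bm w\in C([0,\infty);L^2)$ with $\bm w(0)=0$ and $t\mapsto\|\bm w(t)\|_{L^2}^2$ locally absolutely continuous; set $M\defeq\sup_t(\|\bm m_1(t)-\bm{\hat e}_3\|_{H^2}+\|\bm m_2(t)-\bm{\hat e}_3\|_{H^2})<\infty$. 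The plan is a Gronwall estimate for $\|\bm w(t)\|_{L^2}^2$ based on the biharmonic dissipation of \eqref{eq:LLG}. First I would recast \eqref{eq:LLG} in Landau--Lifshitz form: applying $\mathrm{Id}+\alpha\,\bm m_i\times(\cdot)$ and using $|\bm m_i|=1$, $\bm m_i\cdot\partial_t\bm m_i=0$ — the products of the $H^2$-factor $\bm m_i$ with the $H^{-2}$-valued expressions $\Delta^2\bm m_i$ etc.\ being meaningful because $H^2(\mathbb R^3)$ multiplies $H^{-2}(\mathbb R^3)$ — gives a distributional identity of the form
\begin{equation*}
(1+\alpha^2)\,\partial_t\bm m_i=-\alpha\,\Delta^2\bm m_i+\bm m_i\times\Delta^2\bm m_i+\alpha\,\bm m_i\,(\bm m_i\cdot\Delta^2\bm m_i)+\mathcal L_i ,
\end{equation*}
where $\mathcal L_i$ gathers the terms with at most two derivatives on $\bm m_i$ together with the fixed transport terms $-(\bm j\cdot\nabla)\bm m_i-\alpha\,\bm m_i\times((\bm j\cdot\nabla)\bm m_i)$. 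Its right-hand side lies in $L^2_{\mathrm{loc}}((0,\infty);H^{-2})$, which together with $\partial_t\bm w\in L^2(L^2)$ legitimizes testing the difference of the two identities against $\bm w(\cdot,t)\in H^2$ and writing $\langle\partial_t\bm w,\bm w\rangle=\tfrac12\tfrac{d}{dt}\|\bm w\|_{L^2}^2$.

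The energy computation hinges on two pointwise identities. Since $(\bm w\times\bm\psi)\cdot\bm w=0$, every contribution of the form $\bm w\times(\text{expression in }\bm m_2)$ produced by the difference drops out — in particular the most singular one, $\bm w\times\Delta^2\bm m_2$, and also $\bm w\times\Delta\bm m_2$ and $\bm w\times((\bm j\cdot\nabla)\bm m_2)$. Since $\bm m_1\cdot\bm w=\tfrac12|\bm w|^2=-\bm m_2\cdot\bm w$ (from $|\bm w|^2=2-2\bm m_1\cdot\bm m_2$), the difference of the $\alpha\,\bm m_i(\bm m_i\cdot\Delta^2\bm m_i)$ terms becomes $\tfrac\alpha2\int|\bm w|^2(\bm m_1\cdot\Delta^2\bm m_1+\bm m_2\cdot\Delta^2\bm m_2)$. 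The leading term gives the good sign, $\langle-\alpha\,\Delta^2\bm w,\bm w\rangle=-\alpha\|\Delta\bm w\|_{L^2}^2$; the remaining fourth-order piece $\langle\bm m_1\times\Delta^2\bm w,\bm w\rangle=-\langle\Delta\bm w,\Delta(\bm m_1\times\bm w)\rangle$ reduces, after Leibniz and $\langle\Delta\bm w,\bm m_1\times\Delta\bm w\rangle=0$, to $-\langle\Delta\bm w,(\Delta\bm m_1)\times\bm w\rangle-2\sum_k\langle\Delta\bm w,(\partial_k\bm m_1)\times\partial_k\bm w\rangle$; and in the $|\bm w|^2$-term one substitutes $\bm m_i\cdot\Delta^2\bm m_i=-\Delta|\nabla\bm m_i|^2-|\Delta\bm m_i|^2-2\,\nabla\bm m_i:\nabla\Delta\bm m_i$ and integrates by parts until at most two derivatives act on $\bm m_i$.

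It then remains to bound every term that has appeared by $\ve\|\Delta\bm w\|_{L^2}^2+C_\ve\Phi(t)P(\|\bm w(t)\|_{L^2}^2)$, with $P$ a fixed polynomial, $P(0)=0$, and $\Phi(t)=(1+\|\bm m_1(t)-\bm{\hat e}_3\|_{H^2}+\|\bm m_2(t)-\bm{\hat e}_3\|_{H^2}+\|\bm j(t)\|_{L^{6/5}})^{N}$ for a suitable $N$. This uses only the three-dimensional embeddings $H^2\hookrightarrow L^\infty\cap W^{1,6}$, $H^1\hookrightarrow L^6$, the interpolation inequalities $\|\nabla\bm w\|_{L^3}\lesssim\|\Delta\bm w\|_{L^2}^{1/4}\|\bm w\|_{L^2}^{1/4}\|\bm w\|_{H^2}^{1/2}$ and $\|\bm w\|_{L^\infty}\lesssim\|\bm w\|_{L^2}^{1-\sigma}\|\bm w\|_{H^2}^{\sigma}$ for some $\sigma<1$, together with $\|\nabla\bm w\|_{L^2}^2\le\|\Delta\bm w\|_{L^2}\|\bm w\|_{L^2}$, $\|\bm w\|_{H^2}\lesssim\|\bm w\|_{L^2}+\|\Delta\bm w\|_{L^2}$, and Young's inequality — all powers of $\|\Delta\bm w\|_{L^2}$ that occur being strictly below $2$, hence absorbable into the dissipation. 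The transport terms are handled via $|\langle(\bm j\cdot\nabla)\bm w,\bm w\rangle|\le\|\bm j\|_{L^{6/5}}\|\nabla\bm w\|_{L^6}\|\bm w\|_{L^\infty}$ and the analogous bound after the skew identity $\langle\bm m_1\times((\bm j\cdot\nabla)\bm w),\bm w\rangle=-\langle(\bm j\cdot\nabla)\bm w,\bm m_1\times\bm w\rangle$, while $\bm w\times((\bm j\cdot\nabla)\bm m_2)$ again vanishes against $\bm w$. Choosing $\ve$ small to absorb $\ve\|\Delta\bm w\|_{L^2}^2$ and summing gives $\tfrac{d}{dt}\|\bm w(t)\|_{L^2}^2\le\Phi(t)P(\|\bm w(t)\|_{L^2}^2)$ a.e.; since $\bm m_i\in L^\infty((0,\infty);H^2)$ and $\bm j\in L^\infty((0,\infty);L^{6/5})$, $\Phi$ is locally bounded, and $P(0)=0$, $\|\bm w(0)\|_{L^2}=0$ force $\bm w\equiv0$, i.e.\ $\bm m_1\equiv\bm m_2$.

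I expect the main obstacle to be the interplay between the fourth order of \eqref{eq:LLG} and the low regularity of the coefficients $\bm m_1,\bm m_2$, which lie only in $H^2(\mathbb R^3)$ (so $\nabla\bm m_i\in L^6$ but not $L^\infty$): the borderline terms are precisely those in which three or four derivatives must be transferred from $\bm m_i$ onto $\bm w$, namely $\langle\Delta\bm w,(\partial_k\bm m_1)\times\partial_k\bm w\rangle$, $\langle\Delta\bm w,(\Delta\bm m_1)\times\bm w\rangle$ and $\int|\bm w|^2\,\partial_k\bm m_i\cdot\partial_k\Delta\bm m_i$. In dimension three the Hölder and interpolation exponents are tight for these, and the estimate closes only because the biharmonic dissipation $-\alpha\|\Delta\bm w\|_{L^2}^2$ is available and the problem is subcritical with respect to the energy space $H^2$; concretely, in the last term $|\bm w|^2$ must be used as an $H^{3/2+\delta}$-multiplier against $\nabla\bm m_i:\nabla\Delta\bm m_i\in H^{-1}$, rather than as an $H^2$-multiplier, in order to keep a positive power of $\|\bm w\|_{L^2}$ free for absorption. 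A routine preliminary step is to verify that a distributional solution in the stated class may indeed be rewritten in Landau--Lifshitz form and that this identity holds in $L^2_{\mathrm{loc}}((0,\infty);H^{-2})$, so that the $L^2$ energy identity is licit.
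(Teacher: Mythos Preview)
Your proposal is correct and follows essentially the same route as the paper: pass to the Landau--Lifshitz form, subtract the two equations, test against $\bm w=\bm m_1-\bm m_2$, use the biharmonic dissipation $-\alpha\|\Delta\bm w\|_{L^2}^2$ to absorb all remaining terms via Young's inequality, and close with Gronwall; in particular your treatment of the transport term via $\|\bm j\|_{L^{6/5}}\|\nabla\bm w\|_{L^6}\|\bm w\|_{L^\infty}\lesssim\|\bm j\|_{L^{6/5}}\|\bm w\|_{H^2}^{7/4}\|\bm w\|_{L^2}^{1/4}$ coincides with the paper's. The only cosmetic difference is that the paper writes the $\Lambda$-contribution as $-\langle|\Delta\bm m|^2\bm m,\bm v\rangle+2\langle D\bm m\otimes D\bm m,D^2(\bm v\cdot\bm m)\rangle$ and cites \cite{ChugreevaMelcher2017} for the borderline $H^2$-estimates you sketch explicitly.
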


\paragraph{Further questions and possible extensions}

\begin{enumerate}
\item The system of equations considered in this work are perhaps the most basic model that
features a coupling of dissipative magnetization dynamics and classical electron transport via emergent electromagnetic fields. The compactness methods of
\cite{DiPernaLions1989} and thus the present result extends to 
the relativistic case where $\bm v$ is replaced by $\bm v/\sqrt{1+|\bm v|^2}$, see also \cite{Rein2004, Glassey1986} for existence results in this case. The classical solutions can be extended globally in time provided the momentum support can be controlled. 

\item A weightier generalization towards a more realistic model of electron transport in solids lies in the inclusion of particle interactions in the form of a collisions by means of a Boltzmann operator or BGK model, a simplified form of it. Ignoring magnetic fields, global weak solution to the Vlasov-Poisson-BGK system have been constructed in \cite{Zhang2010,Zhang2013}. Due to the limited regularity of the Lorentz force, however, the arguments based on velocity moments lemmata as in \cite{Perthame1989, Perthame1990} do not extend to the Vlasov-Maxwell-BGK system, an open problem of its own.

\item In the context of the Landau-Litshitz-Gilbert equations, we neglect the coupling of the magnetization field to the Maxwell equations being part of the magnetic field, 
i.e. a constitutive law $\bm B= \bm H + \bm m$, giving rise to the Landau-Lifshitz-Gilbert-Maxwell system. In micromagnetics, it is customary to assume a quasistatic situation where electric fields are ignored and the magnetic Gau{\ss} law
gives rise to the so-called stray-field interaction, which is a non-local but lower order contribution, see e.g. \cite{Melcher2010} an literature therein. 

\item It would also be interesting to further investigate the role of Gilbert damping $\alpha>0$. The corresponding space-time $L^2$ bound on $\partial_t \bm m$
provides a suitable bound for the emergent electric field. The lack of a natural uniform bound in the case $\alpha=0$ requires high regularity of $f$, and it would be interesting to investigate local well-posedness results for this fully conservative system.

\end{enumerate}

\section{Solving the Landau-Lifshitz-Gilbert equation}
We examine global solvability of \eqref{eq:LLG_fourth} for a fixed current density $\bm j$ with regularity specified below. The requisite higher order Sobolev estimates can eventually be reduced to an $H^2$ estimate which is bounded by the energy \eqref{eq:frustratedmagnet}
for large enough $h$.

\begin{lem}\label{lem:energija}
Suppose $h>1/4$. Then $E(\bm m)$ is equivalent to $\|\bm m- \bm{\hat {e}}_3 \|_{H^2}^2\,.$
\end{lem}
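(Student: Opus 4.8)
The claim is that for $h>1/4$, the quadratic functional
\[
E(\bm m) = \frac12\int_{\mathbb R^3} |\nabla^2\bm m|^2 - |\nabla\bm m|^2 + h|\bm m-\bm{\hat e}_3|^2\,d\bm x
\]
is comparable to $\|\bm m-\bm{\hat e}_3\|_{H^2}^2$. The plan is to work on the Fourier side with the shifted field $\bm u\defeq\bm m-\bm{\hat e}_3\in H^2(\mathbb R^3;\mathbb R^3)$, noting that $\nabla\bm m=\nabla\bm u$ and $\nabla^2\bm m=\nabla^2\bm u$, so that
\[
2E(\bm m) = \int_{\mathbb R^3}\bigl(|\xi|^4 - |\xi|^2 + h\bigr)\,|\widehat{\bm u}(\xi)|^2\,d\xi
\]
by Plancherel, while $\|\bm u\|_{H^2}^2 = \int_{\mathbb R^3}(1+|\xi|^2)^2|\widehat{\bm u}(\xi)|^2\,d\xi$. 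Thus the equivalence of the two functionals reduces to showing that the symbol ratio
\[
P(t)\defeq\frac{t^2 - t + h}{(1+t)^2},\qquad t=|\xi|^2\ge 0,
\]
is bounded above and below by positive constants uniformly in $t\in[0,\infty)$.

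For the lower bound I would first check that the numerator $p(t)=t^2-t+h$ is itself strictly positive on $[0,\infty)$: its discriminant is $1-4h<0$ precisely when $h>1/4$, so $p(t)\ge h-\tfrac14>0$ for all real $t$ (minimum attained at $t=1/2$). Hence $P(t)>0$ pointwise. Since $P$ is continuous on $[0,\infty)$ with $P(0)=h>0$ and $\lim_{t\to\infty}P(t)=1$, and $P>0$ throughout, it attains a positive minimum $c>0$ and is bounded above by some $C<\infty$ on the compact-plus-asymptotic range; concretely one can take $C=\max(h,1)$ up to a harmless factor after a short elementary estimate, and $c = \min_{t\ge0}P(t)>0$. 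This gives $c(1+t)^2 \le p(t)\le C(1+t)^2$, which upon integrating against $|\widehat{\bm u}(\xi)|^2$ yields $2cE(\bm m)^{-1}$... more precisely $\tfrac{c}{2}\|\bm u\|_{H^2}^2 \le E(\bm m)\le \tfrac{C}{2}\|\bm u\|_{H^2}^2$, i.e. the asserted equivalence.

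The only mild subtlety — and the step I would be most careful about — is making the two-sided bound on $P$ fully explicit rather than merely invoking compactness, since later parts of the paper presumably track the dependence of constants on $h$. This is entirely elementary: for the upper bound, $t^2-t+h \le t^2+h \le \max(1,h)(1+t)^2$ since $t^2+h\le \max(1,h)(1+2t+t^2)$; for the lower bound, using $p(t)\ge h-\tfrac14$ on $[0,2]$ and $p(t)\ge t^2-t\ge \tfrac12 t^2\ge\tfrac1{18}(1+t)^2$ for $t\ge 2$ (say), one gets a constant $c$ depending only on $h$. Everything else is Plancherel's theorem applied componentwise to $\bm u\in H^2(\mathbb R^3;\mathbb R^3)$; no regularity issues arise because $\bm m - \bm{\hat e}_3 \in H^2$ is exactly the hypothesis, and the Zeeman term is finite precisely on this space.
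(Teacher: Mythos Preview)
Your proof is correct and follows essentially the same Fourier-side strategy as the paper: both reduce to the positivity of the symbol $|\xi|^4-|\xi|^2+h$ for $h>1/4$, with the paper obtaining the lower bound via the single Young inequality $|\xi|^2\le |\xi|^4/(4\epsilon)+\epsilon$ (i.e.\ completing the square) rather than your discriminant-plus-compactness argument. Your version is slightly more verbose but equally valid; the garbled expression ``$2cE(\bm m)^{-1}$'' should be cleaned up, as you yourself note.
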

\begin{proof}
The upper bound is straightforward. To obtain a lower bound, using Young's inequality for arbitrary $\epsilon>0$ we have
\begin{align*}
\|\nabla \bm m \|_{L^2}^2 = \int_{\mathbb R^3} |\xi|^2 \left| \mathcal F (\bm m - \bm{\hat {e}}_3) \right|^2 \, d\bm \xi \leq \int_{\mathbb R^3} \left(\frac{|\xi|^4}{4\epsilon} + \epsilon \right) \left| \mathcal F (\bm m - \bm{\hat {e}}_3) \right|^2 \, d\bm \xi \, .
\end{align*}
Then it follows
\begin{align*}
E(\bm m ) \geq \left(1-\frac{1}{4\epsilon}\right)\| \nabla^2 \bm m \|^2_{L^2} + (h-\epsilon) \| \bm m - \bm{\hat {e}}_3 \|_{L^2}^2 \, .
\end{align*}
Since $h>1/4$ we can take any $\epsilon>0$ such that $1/4<\epsilon<h$ and conclude.
\end{proof}

$H^2$ coercivity of the energy is closely related to uniform parabolicity of the governing Landau-Lifshitz-Gilbert equation.
To highlight the structure of \eqref{eq:LLG} as a fourth order parabolic system, we pass to the so-called Landau-Lifshitz formulation, see e.g. \cite{MelcherPtashnyk2013}. Extracting the leading fourth order terms yields
\begin{equation} \label{eq:LLG_fourth}
    (1+\alpha^2) \partial_t \bm m +   A(\bm m) \Delta^2 \bm m = A(\bm m) \bm f - \alpha \Lambda \bm m \, ,
\end{equation}
where $A(\bm m) \in \mathbb{R}^{3 \times 3}$ is such that for all $\bm \xi \in \mathbb{R}^3$
\[
A(\bm m)  \bm \xi =  \alpha \bm \xi - \bm m \times \bm \xi.
\]
The vector field $\bm f$ and the function $\Lambda$ depend on $\bm m$ and its derivatives. More precisely
\begin{equation*} 
\bm f =  \left[  h \, \bm{\hat {e}}_3 - \bm m \times (\bm j \cdot \nabla) \bm m - \Delta \bm m \right]^{\rm tan} \, ,
\end{equation*}
where $\bm \xi^{\mathrm tan}= \bm \xi - (\bm \xi \cdot \bm m) \bm m$. Moreover $\Lambda = -\bm m \cdot \Delta^2 \bm m$. 
Taking into account that $|\bm m|=1$ 
\begin{equation} 
\Lambda = |\Delta \bm m|^2 + \Delta | \nabla \bm m|^2 + 2 \nabla \bm m \cdot \nabla \Delta \bm m \, .
\end{equation}

\begin{lem} Suppose that, for $T>0$ and $l\ge 4$, 
\[
\bm m\in C([0,T]; H^l(\mathbb R^3;\mathbb S^2)) \quad \text{with} \quad \partial_t \bm m \in C([0,T];H^{l-4}(\mathbb R^3;\mathbb R^3)) \, ,
\]
is a solution of \eqref{eq:LLG}. Then
\begin{equation} \label{eq:LLG_energy_1}
E(\bm m (T))  + \alpha \int_0^T \| \partial_t \bm m \|_{L^2}^2 \, dt = E(\bm m_0) -\int_0^T \langle \bm j, \bm e \rangle dt.
\end{equation}
and, for $\lambda=\alpha/(1+\alpha^2)$, 
\begin{align} \label{eq:LLG_energy_2}
  \|\Delta \bm m(T)\|_{L^2}^2  + \lambda \int_0^T \|\Delta^2 \bm m\|_{L^2}^2 \, dt
\le\|\Delta \bm m_0\|_{L^2}^2 + C\, &\int_0^T  \alpha^{-1}  \| \bm f\|_{L^2}^2 + \lambda\| \Lambda \|_{L^2}^2 \, dt \, . 
\end{align}
\end{lem}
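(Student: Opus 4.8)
The plan is to prove both statements by classical energy identities, the stated regularity ($\bm m\in C([0,T];H^l)$, $\partial_t\bm m\in C([0,T];H^{l-4})$ with $l\ge 4$) being exactly what is needed to make the required integrations by parts and time differentiations legitimate. For the first identity the natural test function is $\partial_t\bm m$; for the second it is $\Delta^2\bm m$, applied to the Landau--Lifshitz form \eqref{eq:LLG_fourth}.

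For \eqref{eq:LLG_energy_1} I would first recast \eqref{eq:LLG} in tangential (Gilbert) form by crossing with $\bm m$ from the left and using $|\bm m|\equiv 1$ (so $\bm m\cdot\partial_t\bm m=0$) together with $\bm m\times(\bm m\times\bm\xi)=(\bm m\cdot\bm\xi)\bm m-\bm\xi$, which yields
\begin{equation*}
\alpha\,\partial_t\bm m+\bm m\times\partial_t\bm m=-\left(\tfrac{\delta E}{\delta\bm m}\right)^{\mathrm{tan}}-\bm m\times(\bm j\cdot\nabla)\bm m ,
\end{equation*}
where $(\cdot)^{\mathrm{tan}}$ denotes the projection onto $\bm m^\perp$ and $\tfrac{\delta E}{\delta\bm m}=\Delta^2\bm m+\Delta\bm m-h\,\bm{\hat e}_3$ modulo the constraint. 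Taking the $L^2$ scalar product with $\partial_t\bm m$ annihilates the gyroterm $\bm m\times\partial_t\bm m$; since $\partial_t\bm m\perp\bm m$ the tangential projection may be dropped and the resulting pairing identified with $\tfrac{d}{dt}E(\bm m)$. The remaining term is handled by the scalar triple product identity $\bm m\cdot\big((\bm j\cdot\nabla)\bm m\times\partial_t\bm m\big)=\partial_t\bm m\cdot\big(\bm m\times(\bm j\cdot\nabla)\bm m\big)$ which, together with the definition \eqref{eq:emergent_fields} of $\bm e$, gives $\langle\bm m\times(\bm j\cdot\nabla)\bm m,\partial_t\bm m\rangle=\langle\bm j,\bm e\rangle$. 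Hence $\tfrac{d}{dt}E(\bm m)+\alpha\|\partial_t\bm m\|_{L^2}^2=-\langle\bm j,\bm e\rangle$, and integrating over $(0,T)$ gives \eqref{eq:LLG_energy_1}.

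For \eqref{eq:LLG_energy_2} I would test \eqref{eq:LLG_fourth} with $\Delta^2\bm m$: two integrations by parts turn $\langle\partial_t\bm m,\Delta^2\bm m\rangle$ into $\tfrac12\tfrac{d}{dt}\|\Delta\bm m\|_{L^2}^2$, while the pointwise algebraic facts $\langle A(\bm m)\bm\xi,\bm\xi\rangle=\alpha|\bm\xi|^2$ (because $(\bm m\times\bm\xi)\cdot\bm\xi=0$) and $|A(\bm m)\bm\xi|^2=\alpha^2|\bm\xi|^2+|\bm m\times\bm\xi|^2\le(1+\alpha^2)|\bm\xi|^2$ produce a coercive term $\alpha\|\Delta^2\bm m\|_{L^2}^2$ on the left and bound the right-hand side by $\big(\sqrt{1+\alpha^2}\,\|\bm f\|_{L^2}+\alpha\|\Lambda\|_{L^2}\big)\|\Delta^2\bm m\|_{L^2}$, using $\|\Lambda\bm m\|_{L^2}=\|\Lambda\|_{L^2}$. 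A Young inequality absorbing two copies of $\tfrac{\alpha}{4}\|\Delta^2\bm m\|_{L^2}^2$ into the left-hand side, followed by division by $\tfrac{1+\alpha^2}{2}$ and integration over $(0,T)$, yields \eqref{eq:LLG_energy_2} with $\lambda=\alpha/(1+\alpha^2)$ and $C=2$.

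The only genuinely delicate point — and the place where I would spend care — is justifying the two time differentiations at the borderline regularity $l=4$, where $\partial_t\bm m$ and $\Delta^2\bm m$ only lie in $C([0,T];L^2)$. There I would invoke the standard Lions--Magenes interpolation lemma for the Gelfand triple $H^2\hookrightarrow L^2\hookrightarrow H^{-2}$ to legitimise $\tfrac{d}{dt}\|\Delta\bm m\|_{L^2}^2=2\langle\partial_t\Delta\bm m,\Delta\bm m\rangle$ and, via the bounded symmetric bilinear form underlying the quadratic functional $E$ on $H^2$, the chain rule $\tfrac{d}{dt}E(\bm m)=\langle\tfrac{\delta E}{\delta\bm m},\partial_t\bm m\rangle$; the spatial integrations by parts carry no boundary contributions because of the $H^l$ decay, and one also needs enough regularity of $\bm j$ for the term $\bm m\times(\bm j\cdot\nabla)\bm m$ in $\bm f$ to be square integrable and for $\langle\bm j,\bm e\rangle$ to be a well-defined pairing. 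With these points in place the rest is a routine computation.
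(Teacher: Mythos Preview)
Your proposal is correct and essentially coincides with the paper's (very terse) proof. For \eqref{eq:LLG_energy_1} the paper simply says ``multiply \eqref{eq:LLG} by $\bm m\times\partial_t\bm m$'', which is algebraically the same as your two-step procedure of first crossing with $\bm m$ to pass to the Gilbert form and then testing with $\partial_t\bm m$ (the triple product identity makes the two routes identical); for \eqref{eq:LLG_energy_2} both you and the paper test \eqref{eq:LLG_fourth} with $\Delta^2\bm m$ and use Young's inequality together with the pointwise bound $|A(\bm m)\bm\xi|^2\le(1+\alpha^2)|\bm\xi|^2$. Your extra care with the Lions--Magenes argument at the borderline $l=4$ is a welcome addition that the paper omits.
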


\begin{proof}
\eqref{eq:LLG_energy_1} is obtained upon multiplying \eqref{eq:LLG} by $\bm m \times \partial_t \bm m$. \eqref{eq:LLG_energy_2} is obtained upon multiplying \eqref{eq:LLG_fourth} by $\Delta^2 \bm m$, using Young's inequality and
the fact that $|A \bm \xi|^2=(1+\alpha^2)|\bm \xi|^2$.
\end{proof}

\paragraph{Short-time solutions} Fourth order quasilinear parabolic systems of the more general form
$\partial_t \bm u +A(\bm u)  \Delta^2 \bm u = \bm B(t, \bm x, \bm u, \dots, \nabla^3 \bm u)$ 
admit a local theory of existence theory in Sobolev spaces $H^l(\mathbb{R}^3; \mathbb{R}^3)$ with $l\ge 5$ so that, by Sobolev embedding, $\nabla^k \bm u$ is uniformly bounded in space for $0 \le k \le 3$.
Assuming that 
$A$ is smooth and uniformly elliptic, i.e., there exists $\alpha>0$ such that $\bm \xi A(\bm u) \bm \xi \ge \alpha |\bm \xi|^2$ for all $\bm u$ and $\bm \xi$, and $\bm B$ is continuous with smooth dependence on $\bm u$ and its derivatives and local bounds that are independent of $x$ and $t$, a priori estimates 
are obtained by using multipliers $\Delta^k \bm u$ as in \eqref{eq:LLG_energy_2} but for $1 \le k \le l$. A bootstrap and Gronwall-type argument yields a $H^l$-bound up to some time $T>0$. Analogue bounds 
can be obtained for suitably approximated or truncated systems that can be solved locally by a ODE argument. Compactness arguments then yield a short time solution to the original system as in \cite{Taylor3}. Details can be found in \cite{Tvrtko_Diss}. Letting $\bm u = \bm m - \bm{\hat {e}}_3$
as is \cite{Melcher2014, MelcherPtashnyk2013}, this modified Galerkin method applies to \eqref{eq:LLG} and yields short time solutions $\bm m\in C([0,T]; H^l(\mathbb R^3;\mathbb S^2)) \cap C^1([0,T]; H^{l-4}(\mathbb R^3;\mathbb S^2))$.

\subsubsection*{Global smooth solution}
Owing to the special structure of the geometric nonlinearities of \eqref{eq:LLG}, uniform bounds extend to all times. 

\begin{thm}\label{thm:LLGglobalno}
Suppose $\bm m_0 \in H^l(\mathbb R^3;\mathbb S^2)$ and $\bm j \in C([0,\infty );H^{l-2}(\mathbb R^3;\mathbb R^3))$ for some integer $l\geq 5\,$. Then there exists a unique global solution of \eqref{eq:LLG_fourth} such that
\begin{equation*}
     \bm m\in C([0,\infty); H^l(\mathbb R^3;\mathbb S^2)) \quad \text{and} \quad \partial_t \bm m \in C([0,\infty);H^{l-4}(\mathbb R^3;\mathbb R^3))\, .
\end{equation*}
\end{thm}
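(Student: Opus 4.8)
\emph{Strategy.} I would argue by continuation. The modified Galerkin scheme sketched above produces, for $\bm m_0\in H^l$ and $\bm j\in C([0,\infty);H^{l-2})$ with $l\ge 5$, a maximal solution $\bm m\in C([0,T^*);H^l)\cap C^1([0,T^*);H^{l-4})$ of \eqref{eq:LLG_fourth}, and it is enough to show that on every finite subinterval $[0,T]\subset[0,T^*)$ there is an a priori bound $\sup_{t\in[0,T]}\|\bm m(t)-\bm{\hat e}_3\|_{H^l}\le C\big(T,\|\bm m_0\|_{H^l},\sup_{[0,T]}\|\bm j(t)\|_{H^{l-2}}\big)$, since then the local theorem can be reapplied at times tending to $T^*$, forcing $T^*=\infty$. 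One first records that a solution with $\mathbb S^2$-valued datum stays $\mathbb S^2$-valued: writing $u=|\bm m|^2-1$ and using $A(\bm m)\bm\xi\cdot\bm m=\alpha\,\bm\xi\cdot\bm m$ together with $\Lambda=-\bm m\cdot\Delta^2\bm m$, one checks that $u$ solves a linear fourth order parabolic equation with $u|_{t=0}=0$, hence $u\equiv 0$ and \eqref{eq:LLG_fourth} is genuinely equivalent to \eqref{eq:LLG}.

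\emph{Base estimates.} The bottom of the hierarchy is \eqref{eq:LLG_energy_1}. Since $|\bm e|\le|\nabla\bm m|\,|\partial_t\bm m|$ and $H^{l-2}\hookrightarrow L^\infty$, Young's inequality absorbs $\tfrac\alpha2\|\partial_t\bm m\|_{L^2}^2$ into the left-hand side; combining with Lemma~\ref{lem:energija} (using $h>1/4$) and Grönwall's inequality — the integrating factor $\exp\!\big(C\!\int_0^T\|\bm j\|_{L^\infty}^2\,dt\big)$ being finite — yields $\sup_{[0,T]}\|\bm m-\bm{\hat e}_3\|_{H^2}^2+\int_0^T\|\partial_t\bm m\|_{L^2}^2\,dt\le C(T)$. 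Feeding this into \eqref{eq:LLG_energy_2}, bounding $\|\bm f\|_{L^2}$ through $\|(\bm j\cdot\nabla)\bm m\|_{L^2}\le\|\bm j\|_{L^\infty}\|\nabla\bm m\|_{L^2}$ and the now-controlled $H^2$ norm, and bounding each summand of $\Lambda$ by a Gagliardo--Nirenberg inequality in $\mathbb R^3$ of the form ``$\|\Delta^2\bm m\|_{L^2}^{\theta}$ with $\theta<2$, times a power of the bounded $H^2$ norm'', one absorbs the top-order contribution and obtains $\sup_{[0,T]}\|\Delta\bm m\|_{L^2}^2+\int_0^T\|\Delta^2\bm m\|_{L^2}^2\,dt\le C(T)$, i.e.\ $\bm m\in L^\infty\big((0,T);H^2\big)\cap L^2\big((0,T);\dot H^4\big)$.

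\emph{Bootstrap.} I would then climb to $H^l$ by induction on $m=3,\dots,l$, testing \eqref{eq:LLG_fourth} against the multipliers $(-1)^m\sum_{|\gamma|=m}\partial^{2\gamma}\bm m$. The linear part of $A(\bm m)\Delta^2\bm m$ produces the favourable term $\tfrac12\tfrac{d}{dt}\sum_{|\gamma|=m}\|\partial^\gamma\bm m\|_{L^2}^2+c\sum_{|\gamma|=m}\|\Delta\partial^\gamma\bm m\|_{L^2}^2$ with $c=c(\alpha)>0$, and the decisive point is that the would-be top-order part of the geometric term $-\langle\bm m\times\Delta^2\bm m,\partial^{2\gamma}\bm m\rangle$, namely a multiple of $\langle\Delta\partial^\gamma\bm m,\Delta\partial^\gamma\bm m\times\bm m\rangle$, vanishes identically by the algebraic identity $\bm a\cdot(\bm a\times\bm b)=0$ forced by $|\bm m|=1$, so that after Leibniz expansion and suitable integrations by parts only multilinear remainders survive in which the highest derivative of $\bm m$ enters at most linearly in $L^2$; the same bookkeeping applied to $\Lambda\bm m$ and to the transport term $\bm m\times(\bm j\cdot\nabla)\bm m$ — where $\bm j\in H^{l-2}$ is precisely the regularity matching $\bm m\in H^l$, as in \cite{MelcherPtashnyk2013,DoeringMelcher2017} — produces remainders of the same type. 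Gagliardo--Nirenberg interpolation against the dissipation term, together with the bounds from the previous rungs, dominates every remainder by $\varepsilon\sum_{|\gamma|=m}\|\Delta\partial^\gamma\bm m\|_{L^2}^2$ plus a locally integrable multiple of $\sum_{|\gamma|=m}\|\partial^\gamma\bm m\|_{L^2}^2$ plus bounded terms; absorbing and invoking Grönwall advances the induction and, at $m=l$, gives the sought $L^\infty_tH^l$ bound, hence $T^*=\infty$. Continuity $\bm m\in C([0,\infty);H^l)$ and $\partial_t\bm m\in C([0,\infty);H^{l-4})$ (the equation reads off the latter from the former) follows by concatenating the continuous local pieces.

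\emph{Uniqueness, and the main obstacle.} For two solutions $\bm m_1,\bm m_2$ with this regularity, $\bm w=\bm m_1-\bm m_2$ solves $(1+\alpha^2)\partial_t\bm w+A(\bm m_1)\Delta^2\bm w=\bm w\times\Delta^2\bm m_2+R$ with $R$ a lower-order difference; testing with $\bm w$, using $\bm w\cdot(\bm w\times\cdot)=0$, integrating by parts in $\langle\bm m_1\times\Delta^2\bm w,\bm w\rangle$ (whose leading part $\langle\Delta\bm w\times\bm m_1,\Delta\bm w\rangle$ vanishes), exploiting the smoothness of $\bm m_1,\bm m_2$ and absorbing $\|\Delta\bm w\|_{L^2}^2$ by Young's inequality, one gets $\tfrac{d}{dt}\|\bm w\|_{L^2}^2\le c(t)\|\bm w\|_{L^2}^2$ with $c\in L^1_{loc}$, whence $\bm w\equiv 0$ by Grönwall. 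I expect the genuinely delicate part to be the bootstrap step: verifying that, after the Leibniz expansion forced by the multiplier $\partial^{2\gamma}\bm m$ and a judicious sequence of integrations by parts, \emph{every} monomial arising from $\bm m\times\Delta^2\bm m$, $\Lambda\bm m$ and $(\bm j\cdot\nabla)\bm m$ has total differential order not exceeding that of the dissipation, with the top factor appearing linearly in $L^2$ and the remaining factors controlled by Gagliardo--Nirenberg against the previously established norms — this accounting, and in particular the systematic use of the cancellations forced by $|\bm m|=1$, is the technical heart of the proof and is carried out in detail in \cite{Tvrtko_Diss}.
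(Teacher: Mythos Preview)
Your proposal is correct and follows the same three-tier architecture as the paper: the energy identity \eqref{eq:LLG_energy_1} with Grönwall yields the $H^2$ bound, this is fed into \eqref{eq:LLG_energy_2} via Gagliardo--Nirenberg to produce the crucial $L^2_t\dot H^4$ control $\int_0^T\|\Delta^2\bm m\|_{L^2}^2\,dt$, and the $H^l$ bound is then closed by Grönwall after eliminating the would-be top-order cross term through $\langle\partial^\gamma\Delta\bm m\times\bm m,\partial^\gamma\Delta\bm m\rangle=0$.

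The one substantive difference is in the bootstrap. You propose an induction $m=3,\dots,l$, interpolating at each rung between the dissipation $\|\Delta\partial^\gamma\bm m\|_{L^2}^2$ and the bounds from the previous level. The paper instead does the full $H^l$ estimate in one shot: it uses Moser's product inequality $\|fg\|_{H^k}\le C(\|f\|_{H^k}\|g\|_{L^\infty}+\|f\|_{L^\infty}\|g\|_{H^k})$ on every nonlinear term, so that after absorbing the highest order, the Grönwall integrand depends only on $\|\nabla\bm m\|_{L^\infty}$, $\|\Delta\bm m\|_{L^\infty}$ and $\|\bm j\|_{H^{l-2}}$, independent of $l$. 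The integrability in $t$ of these low-order $L^\infty$ norms is then read off, via Sobolev--interpolation, from the $L^2_t\dot H^4$ bound alone: one gets $\nabla\bm m\in L^q_tL^\infty_x$ for $q<8$ and $\Delta\bm m\in L^q_tL^\infty_x$ for $q<8/3$, which makes the Grönwall coefficient integrable. This buys a cleaner argument that avoids tracking intermediate $H^m$ and $L^2_tH^{m+2}$ bounds and makes transparent that only the $H^2/\dot H^4$ information is actually needed to close any $H^l$; your inductive route is more pedestrian but equally valid.
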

\begin{rem}
In Theorem \ref{thm:uniqueness} we assume less regularity on $\bm j$ and thus, in particular, it applies to obtain uniqueness for the solution from Theorem \ref{thm:LLGglobalno}.
\end{rem}
\begin{proof}[\textbf{Proof of Theorem \ref{thm:LLGglobalno}}]
We proceed in three steps. We first use Gronwall's inequality on the energy inequality \eqref{eq:LLG_energy_1}. Then we write down estimates that justify the use of Gronwall's inequality in \eqref{eq:LLG_energy_2} which proves $\Delta^2 \bm m$ remains bounded in $L^2((0,T)\times \mathbb R^3;\mathbb R^3)$ for all $T>0$. Finally, we obtain global extension. 

\textbf{\textit{Step 1}} From \eqref{eq:LLG_energy_1}, Lemma \ref{lem:energija} and Young's inequality we obtain
\begin{align*}
    \frac{\alpha}{2} \, \int_0^T \|\partial_t \bm m\|_{L^2}^2 \, dt + C_1\, \| \bm m(T) - \bm{\hat {e}}_3\|_{H^2}^2 \leq  E(\bm m_0) +  C_2 \, \|\bm j\|_{L^\infty_{t,x}}^2 \int_0^T \| \nabla \bm m \|^2_{L^2} \, dt \, ,
\end{align*}
for some constants $C_1,C_2>0$. Using Gronwall's inequality we get
\begin{align}\label{eq:energyStep1}
    \frac{\alpha}{2} \, \int_0^T \|\partial_t \bm m\|_{L^2}^2 \, dt + C_1 \,  \| \bm m(T) - \bm{\hat {e}}_3\|_{H^2}^2 \leq E(\bm m_0) \, e^{C(T,\bm j) } \, ,
\end{align}
where $C(T,\bm j) = C_2/C_1 \,T \, \|\bm j\|_{L^\infty_{t,x}}^2 \, .$

\textbf{\textit{Step 2}} We expand on \eqref{eq:LLG_energy_2}. To estimate $\|\Lambda\|_{L^2}$ we use the following Sobolev and interpolation inequalities
\begin{align*}
    \| f \|_{L^4} \leq C \, \| f\|_{\dot H^{3/4}} \, , \quad
    \| f\|_{\dot H^s} \leq \| f\|_{\dot H^{s_1}}^{(s_2-s)/(s_2-s_1)} \, \| f\|_{\dot H^{s_2}}^{(s-s_1)/(s_2-s_1)} \, ,
\end{align*}
where $0\leq s_1<s<s_2$. For the first term of $\Lambda$ we have
\begin{align*}
    \| |\Delta \bm m|^2 \|_{L^2} &\leq C \, \| \Delta \bm m \|_{\dot H^{3/4}}^2 
    \leq C \, \| \Delta \bm m \|^{3/4}_{\dot H^{2}} \| \Delta \bm m \|_{L^2}^{5/4}  
    \leq C \, \| \Delta^2 \bm m \|_{L^2}^{3/4} \| \bm m - \bm{\hat {e}}_3 \|_{H^2}^{5/4} \, .
\end{align*}
The second term of $\Lambda$ is bounded by
\begin{align*}
    \| \Delta |\nabla \bm m|^2\|_{L^2} \leq C \left( \| |\nabla^2 \bm m |^2 \|_{L^2} +  \| \nabla \bm m \cdot \nabla \Delta  \bm m \|_{L^2}  \right) \, ,
\end{align*}
where $\| |\nabla^2 \bm m |^2 \|_{L^2}$ satisfies the same estimate as $\| |\Delta \bm m|^2 \|_{L^2}$. 
Moreover,
\begin{align*}
    \| \nabla \bm m \cdot \Delta \nabla \bm m \|_{L^2} &\leq C \, \| \nabla \bm m \|_{L^4} \| \Delta \nabla \bm m \|_{L^4} \\
    &\leq C \, \| \nabla \bm m \|_{\dot H^{3/4}} \| \Delta \nabla \bm m \|_{\dot H^{3/4}} \\
    &\leq C \, \| \bm m - \bm{\hat {e}}_3\|_{H^2} \| \Delta \bm m \|_{\dot H^{7/4}} \\
    &\leq C \, \| \bm m - \bm{\hat {e}}_3\|_{H^2} \| \Delta \bm m \|_{L^2}^{1/8} \| \Delta \bm m \|_{\dot H^2}^{7/8} \\
    &\leq C \, \| \bm m - \bm{\hat {e}}_3 \|_{H^2}^{9/8}  \| \Delta^2 \bm m \|^{7/8}_{L^2} \, , 
\end{align*}
which provides the required estimate for the last term of $\Lambda$ as well. We estimate $\|\bm f\|_{L^2}$ by
\begin{align*}
    \| \bm f \|_{L^2} \leq h \|  \bm{\hat {e}}_3^{\rm tan} \|_{L^2} + \| (\bm j \cdot \nabla ) \bm m \|_{L^2} + \|\Delta \bm m \|_{L^2} \, ,
\end{align*}
where we used $|\bm \xi ^{\rm tan} | \leq |\bm \xi|.$ Since $|\bm m| = 1$ we have $2(1-m_3) = |\bm m-  \bm{\hat {e}}_3|^2$ and hence
\begin{align*}
   \|  \bm{\hat {e}}_3^{\rm tan} \|_{L^2} = \left(\int_{\mathbb R^3} 1-m_3^2 \, d\bm x \right)^{1/2} = \left(\int_{\mathbb R^3} \frac{1}{2}|\bm m - \bm{\hat {e}}_3 |^2 (1+m_3)  \, d\bm x \right)^{1/2} \leq \|\bm m - \bm{\hat {e}}_3\|_{L^2} \, . 
\end{align*}
The term including $\bm j$ is bounded by
\begin{align*}
 \| (\bm j \cdot \nabla ) \bm m \|_{L^2}  \leq  \| \bm j \|_{L^\infty_{t,x}} \|\nabla \bm m \|_{L^2} \, . 
\end{align*}
Therefore 
\begin{align*}
    \| \bm f \|_{L^2} \leq C \, (1+\| \bm j\|_{L^\infty_{t,x}} ) \| \bm m - \bm{\hat {e}}_3\|_{H^2} 
\end{align*}
for a constant only depending on $h$.
Using the estimates above in \eqref{eq:LLG_energy_2} we get
\begin{align*}
    \| \Delta \bm m (T) \|^2_{L^2} + \lambda \int_0^T  \|\Delta^2\bm m\|^2_{L^2} \, dt \leq \| \Delta \bm m_0 \|^2_{L^2} + C &\int_0^T R\big(\| \bm m - \bm{\hat {e}}_3\|_{H^2},\| \Delta^2\bm m \|_{L^2}\big) \\ 
    \ &\phantom{{} =  }+ \| \bm j \|_{L^\infty_{t,x}}^2 \|\bm m - \bm{\hat {e}}_3 \|_{H^2}^2 \, dt \,  ,
\end{align*}
where the function $R(a,b)$ is given by
\begin{align*}
    R(a,b) = a^2 + a^4 + a^{5/2} b^{3/2} + a^{9/4} b^{7/4}.
\end{align*}
Using Young's inequality and absorbing the highest order term on the left-hand side leads to
\begin{align*}
     \| \Delta \bm m (T) \|^2_{L^2} + \frac{\lambda}{2} \int_0^T  \|\Delta^2\bm m\|^2_{L^2} \, dt \leq \| \Delta \bm m_0\|^2_{L^2} + C &\int_0^T 1+ \| \bm m - \bm{\hat {e}}_3\|_{H^2}^{18}  \\
     \ &\phantom{{} =  } +\| \bm j \|_{L^\infty_{t,x}}^2 \| \bm m - \bm{\hat {e}}_3\|_{H^2}^2 \, dt \, .
\end{align*}
Using the $H^2$ estimate \eqref{eq:energyStep1}, we obtain
\begin{align}\label{eq:energyStep2}
      \| \Delta \bm m(T) \|^2_{L^2} + \frac{\lambda}{2} \, \int_0^T  \|\Delta^2\bm m\|^2_{L^2} \, dt \leq  \| \Delta \bm m_0\|_{L^2}^2 + C(E(\bm m_0))\, T \, e^{C(T,\bm j)} \left(1+\| \bm j \|_{L^\infty_{t,x}}^2 \right) \, ,
\end{align}
where $C(T,\bm j) = C_2/C_1 \, T  \, \|\bm j\|_{L^\infty_{t,x}}^2 \, .$

\textbf{\textit{Step 3}} We show that the estimates from the previous two steps imply uniform bounds on higher
order Sobolev norms of $\bm m - \bm{\hat {e}}_3$. To this end, we use multipliers $\Delta^k \bm m$ for all $1\leq k \leq l$ and integrate by parts, i.e., letting $D^k = \nabla \otimes \dots \otimes \nabla$ the $k$ fold tensor product, we apply $D^k$ to \eqref{eq:LLG_fourth} and integrate against $D^k \bm m$. We focus on the highest order terms, using Moser's inequality 
$\|fg\|_{H^k} \le C \left( \|f\|_{H^k} \|g\|_{L^\infty}+ \|f\|_{L^\infty} \|g\|_{H^k} \right)$
as an additional tool.
We estimate the first term coming from $\Lambda$ by
\begin{align}\label{eq:LLGStep3_21}
    \big \langle D^k(\bm m \, |\Delta \bm m |^2),D^k \bm m \big \rangle &\leq \| |\Delta \bm m|^2 \bm m \|_{H^l} \| \bm m - \bm {\hat e}_3\|_{H^l} \\
    &\leq C \, \left ( \| \Delta \bm m \|_{L^\infty}^2 \| \bm m - \bm {\hat e}_3\|_{H^l} + \| \Delta \bm m \|_{L^\infty} \| \Delta \bm m \|_{H^l} \right) \| \bm m - \bm {\hat e}_3 \|_{H^l} \nonumber \, .
\end{align}
To estimate the second term from $\Lambda$ we first rewrite it as in Step 2
\begin{align*}
    \big \langle D^k( \bm m \, \Delta |\nabla \bm m|^2), D^k \bm m \big \rangle = \big \langle D^k(\bm m |\nabla^2 \bm m |^2 ) + D^k \big(\bm m (\nabla \bm m \cdot \nabla \Delta \bm m )\big), D^k \bm m \rangle \eqdef (\RN{1})+ (\RN{2}) \, .
\end{align*}
We estimate $(\RN{1})$ in the same way as \eqref{eq:LLGStep3_21} to obtain the same bound.
To estimate $(\RN{2})$ we need to integrate by parts to obtain
\begin{alignat*}{3}
\big \langle D^k \big(\bm m (\nabla \bm m \cdot \nabla \Delta \bm m )\big) , D^k \bm m \big \rangle &= &&- \big \langle D^k (\bm m \, |\Delta \bm m |^2 ), D^k \bm m \big \rangle \\
& &&  - \big \langle D^k \big( (\nabla \bm m \cdot \Delta \bm m)\big) \nabla \bm m, D^k \bm m\big \rangle \\
& &&- \big \langle D^k \big(\bm m  (\nabla \bm m \cdot \Delta \bm m ) \big), \nabla D^k \bm m \big \rangle \\
&\eqdef && \, (a)+(b)+(c) \, .
\end{alignat*}
The first term $(a)$ we already estimated in \eqref{eq:LLGStep3_21}. We estimate $(b)$ as follows
\begin{align*}
    |(b)| &\leq C \,\|\bm m - \bm{\hat {e}}_3\|_{H^l} \left(\| \nabla \bm m\|^2_{L^\infty}\|\Delta \bm m \|_{H^l}  + \|\nabla \bm m \|_{L^\infty} \|\Delta \bm m \|_{L^\infty} \|\nabla \bm m \|_{H^l} \right)\, .
\end{align*}
In the same way we get
\begin{align*}
|(c)|\leq C\, \| \nabla \bm m \|_{H^l} &\Big( \| \Delta \bm m \|_{L^\infty} \| \nabla \bm m \|_{H^l} + \| \nabla \bm m \|_{L^\infty} \| \Delta \bm m \|_{H^l} \\
\ &\phantom{{} +  }+ \| \nabla \bm m \|_{L^\infty} \|\Delta \bm m \|_{L^\infty} \|\bm m - \bm{\hat {e}}_3\|_{H^l} \Big). 
\end{align*}
Remark that we have already estimated the last term coming from $\Lambda$ in $(\RN{2})$.

To treat the leading order term we have
\begin{align*}
    \big \langle D^k \left(A(\bm m) \Delta^2 \bm m \right) , D^k \bm m \big \rangle = \alpha \, \|D^k \Delta \bm m \|_{L^2}^2 + \big \langle D^k ( \bm m \times \Delta^2 \bm m), D^k \bm m \big \rangle \, .
\end{align*}
To estimate the second term we first integrate by parts to get
\begin{align*}
    \big \langle D^k ( \bm m \times \Delta^2 \bm m), D^k \bm m \big \rangle = 2\, \big \langle D^k (\nabla \bm m \times \Delta \bm m ) , \nabla D^k \bm m \big \rangle + \big \langle D^k (\bm m \times \Delta \bm m ), \Delta D^k \bm m \big \rangle \, .
\end{align*}
We only need to estimate the second term on the right-hand side since the first one is estimated analogously to (c). Since $\langle \bm m \times D^k \Delta \bm m, D^k \Delta \bm m \rangle= 0$ we have
\begin{align*}
    \big \langle D^k (\bm m \times \Delta \bm m ), \Delta D^k \bm m \big \rangle &= \big \langle D^{k-1} (D \bm m \times \Delta \bm m ), \Delta D^k \bm m \big \rangle \\
    &\leq C\, \| \Delta \bm m \|_{H^l} \left( \| \nabla \bm m \|_{L^\infty} \|\Delta \bm m \|_{H^{l-1}} + \|\Delta \bm m \|_{L^\infty} \| \bm m - \bm{\hat {e}}_3\|_{H^l} \right) \, .
\end{align*}
We have now estimated all of the highest order terms. We now estimate terms coming from $\bm f$. We have
\begin{align*}
    \left \langle D^k \left[A(\bm m) (\Delta \bm m )^{\rm tan} \right] , D^k \bm m \right \rangle &\leq C\, \|\bm m - \bm{\hat {e}}_3\|_{H^l} \left( \| \Delta \bm m \|_{H^l}  + \|\Delta \bm m \|_{L^\infty} \| \bm m - \bm{\hat {e}}_3\|_{H^l} \right) \, , \\
    \left \langle D^k \left[ A(\bm m) (h \, \bm{\hat {e}}_3)^{\rm tan} \right] , D^k \bm m \right \rangle &\leq C \, \| \bm m - \bm{\hat {e}}_3\|_{H^l}^2 \, .
\end{align*}
The terms including the current density for $k\geq 2$ we estimate as follows
\begin{align*}
    \big \langle D^k \left[ A(\bm m) \, (\bm m \times (\bm j \cdot \nabla ) \bm m) \right], D^k \bm m \big \rangle &= \big \langle D^{k-2} \left[ A(\bm m) \, (\bm m \times  (\bm j \cdot \nabla ) \bm m) \right], D^{k-2} \Delta^2 \bm m \big \rangle \\
    &\leq  C \, \| \Delta \bm m \|_{H^l} \Big( \| \bm m - \bm{\hat {e}}_3\|_{H^{l-2}} \| \bm j \|_{L^\infty} \| \nabla \bm m \|_{L^\infty} \\
    \ &\phantom{{} =  }  + \|\nabla \bm m\|_{L^\infty} \|\bm j \|_{H^{l-2}} + \|\nabla \bm m\|_{H^{l-2}} \|\bm j \|_{L^\infty} \Big) \, .
\end{align*}
For $k=1$ we simply have
\begin{align*}
    \big \langle D \left[ A(\bm m) \, (\bm m \times (\bm j \cdot \nabla ) \bm m) \right], D \bm m \big \rangle &= -\big \langle  \left[ A(\bm m) \, (\bm m \times (\bm j \cdot \nabla ) \bm m) \right], D^2 \bm m \big \rangle \\ &\leq C \, \|\bm j\|_{L^\infty} \| \bm m - \bm{\hat {e}}_3\|_{H^l}^2 \, .
\end{align*}
Making use of the interpolation inequality $\|D f \|_{H^l} \leq C\,\|D^2 f \|_{H^{l}}^{1/2} \| f \|_{H^{l}}^{1/2}\,$ we have
\begin{align}\label{eq:interpolation}
    \|\nabla \bm m \|_{H^l} \leq C\, \|\bm m - \bm{\hat{e}}_3 \|_{H^{l}}^{1/2} \| \Delta \bm m \|_{H^l}^{1/2}\,\quad \text{and} \quad \| \Delta \bm m \|_{H^{l-1}} \leq C\, \|\bm m - \bm{\hat{e}}_3 \|_{H^l}^{1/2} \| \Delta \bm m \|_{H^l}^{1/2} \, .
\end{align}
Summing up the above estimates over all $k \leq l$, using \eqref{eq:interpolation}, Young's inequality, Sobolev embedding and absorbing the highest order term on the left-hand side we obtain 
\begin{align*}
   \|\bm m(T) - \bm{\hat {e}}_3\|_{H^l}^2 + \lambda \int_0^T  \| \Delta \bm m \|_{H^l}^2 \, dt \leq \| \bm m_0- \bm{\hat {e}}_3 \|_{H^l}^2 + C \int_0^T \| \bm m - \bm{\hat {e}}_3\|_{H^l}^2\, F(\bm m , \bm j ) \, dt \, ,
\end{align*}
where
\begin{align*}
    F(\bm m , \bm j ) = 1+ \| \Delta \bm m\|_{L^\infty}^2 + \| \nabla \bm m \|_{L^\infty}^4 + \|\nabla \bm m \|_{L^\infty}^{4/3} \| \Delta \bm m \|_{L^\infty}^{4/3}  
    + \big(1+ \| \nabla \bm m \|^2_{L^\infty}\big) \| \bm j \|_{H^{l-2}}^2 \, ,
\end{align*}
and lower order terms are taken into account as well.
Using Gronwall's inequality we get
\begin{align}\label{eq:Gronwallograda}
    \| \bm m (T) - \bm{\hat {e}}_3\|_{H^l}^2 \leq \| \bm m_0- \bm{\hat {e}}_3 \|_{H^l}^2 \, e^{C(T,\bm m, \bm j)} \, ,
\end{align}
where
\begin{align*}
C(T,\bm m,\bm j) = C \int_0^T F(\bm m, \bm j) \, dt \, .
\end{align*}
We now turn to estimating $F(\bm m, \bm j)$ by using inequality \eqref{eq:energyStep2}. Let $3/2 < s \leq 2$ and $q$ a real number such that $q \,s = 4$. Then we have
\begin{align*}
    \int_0^T \| \Delta \bm m\|_{L^\infty}^q \, dt \leq C  \int_0^T \| \Delta \bm m \|_{\dot H^s}^q \, dt &\leq  C \int_0^T \| \Delta \bm m\|_{\dot H^2}^{qs/2} \|\Delta \bm m\|_{L^2}^{(2-s)q/2} \, dt \\
    &\leq C\, \| \Delta \bm m \|^{(2-s)q/2}_{L^\infty_t L^2_x} \, \int_0^T \| \Delta^2 \bm m  \|_{L^2}^{2} \, dt \\
    &\leq C(T,\bm j, E(\bm m_0)) \, ,
\end{align*}
where $ C(T,\bm j, E(\bm m_0))$ is a constant coming from \eqref{eq:energyStep2}. We have therefore obtained $\Delta \bm m \in L^q_t L^\infty_x$ for all $ 1\leq q < 8/3$\,. Similarly let $3/2<s \leq 2$ and $q$ a real number such that $q (s-1)/2 = 2$, then
\begin{align*}
    \int_0^T \| \nabla \bm m \|_{L^\infty}^q \, dt \leq C \int_0^T \| \nabla \bm m\|_{\dot H^s}^q \, dt &\leq C \int_0^T  \|\nabla \bm m\|_{\dot H^1}^{(3-s)q/2} \|\nabla \bm m \|_{\dot H^3}^{q(s-1)/2} \, dt \\
    &\leq C \,\| \Delta \bm m \|_{L^\infty_t L^2_x}^{(3-s)q/2} \int_0^T \| \Delta^2 \bm m \|_{L^2}^2 \, dt \\
    &\leq C(T,\bm j ,E(\bm m_0)) \, ,
\end{align*}
where again $ C(T,\bm j, E(\bm m_0))$ is the appropriate constant coming from \eqref{eq:energyStep2}. We have then obtained $\nabla \bm m \in L^q_t L^\infty_x$ for all $1\leq q < 8\,$. It remains to bound the mixed term with the help of Holder's inequality
\begin{align*}
    \int_0^T \| \nabla \bm m \|_{L^\infty}^{4/3} \| \Delta \bm m  \|_{L^\infty}^{4/3} \, dt \leq \left(\int_0^T \| \nabla \bm m  \|_{L^\infty}^4 \, dt\right)^{1/3} \left(\int_0^T \| \Delta \bm m  \|_{L^\infty}^2\, dt \right)^{2/3} \, .
\end{align*}
Going back to inequality \eqref{eq:Gronwallograda} we obtain the bound
\begin{align}\label{eq:Gronwallograda1}
\| \bm m(T)-\bm{\hat {e}}_3\|_{H^l} \leq C(T, \bm j  , E(\bm m_0)) \, ,
\end{align}
for all times $T>0$ which gives us a global solution.
\end{proof}

\section{Tools for transport equations}\label{subsection:VM}
We summarize some well known methods and results from the theory of transport equations that will be necessary in our analysis afterwards.

\subsubsection*{Characteristic flow}
 One of the main tools in the topic of kinetic equations is the theory of characteristic flow. In the case of a smooth and divergence-free vector field, the initial distribution gets transported along the characteristics. In particular, let us take the Vlasov equation 
 \begin{align}\label{eq:Vlasovkarakteristike}
     \partial_t f + \bm v \cdot \nabla_x f + \left( \bm F_1(t,\bm x) + \bm v \times \bm F_2(t,\bm x) \right) \cdot \nabla_v f = 0 \, ,
 \end{align}
 for some bounded functions $\bm F_1,\bm F_2\in C([0,T]\times \mathbb R^3 ;\mathbb R^3)$ which are continuously differentiable with respect to $\bm x$. Then for every $t\in [0,T]$ and $(\bm x,\bm v) \in \mathbb R^3 \times \mathbb R^3$ there exists a unique solution $[0,T] \ni s \rightarrow (\bm X,\bm V) (s,t,\bm x,\bm v)$ to the characteristic system of ODEs
 \begin{align*}
    \begin{cases}
    \dot{\bm X}(s) = \bm V(s), \quad &\bm X(t) = \bm x \, , \\
    \dot{\bm V}(s) = \bm F_1(s,\bm X(s)) + \bm V(s) \times  \bm F_2(s,\bm X(s)), \quad &\bm V(t) = \bm v \, .
    \end{cases}
\end{align*}
The characteristic flow is volume preserving as the generating vector field \[
\bm u(t,\bm x, \bm v)=(\bm v,\bm F_1(t ,\bm x) + \bm v \times  \bm F_2(t,\bm x))\, ,\] 
is divergence-free in $(\bm x, \bm v)$. Note that by means of this vector field, the Vlasov equation \eqref{eq:Vlasovkarakteristike} can be recast into the linear transport equation
\begin{equation}\label{eq:Vlasovtransport}
\partial_t f +  \nabla_{x,v} \cdot (\bm u f) = 0.
\end{equation}
For smooth initial data $f_0\in C^1(\mathbb R^3\times \mathbb R^3)$ it is well known that the function given by
\begin{align*}
    f(t,\bm x,\bm v) \defeq f_0(\bm X(0,t,\bm x,\bm v), \bm V(0,t,\bm x, \bm v)), \quad t\in [0,T],(\bm x,\bm v) \in \mathbb R^3\times \mathbb R^3 \, ,
\end{align*}
is the unique solution to \eqref{eq:Vlasovkarakteristike} in the space $C^1([0,T]\times \mathbb R^3 \times \mathbb R^3)$. The solution is constant along every solution of the characteristic system. Moreover, if $f_0$ is non-negative then so is $f$. Finally, by the volume preservation of the characteristic flow,
$f$ satisfies the $L^p$ conservation property
\begin{align*}
    \|f(t)\|_{L^p} =\|f_0\|_{L^p}, \quad t\in [0,T],\; p \in [1,\infty] \, .
\end{align*}
 We refer to \cite{ReinKineticElsevier} for the proof of these results.
\subsubsection*{Velocity averaging}

Let $f\in L^2(\mathbb R \times \mathbb R^N\times \mathbb R^N)$ satisfy the transport equation
\begin{equation} \label{eq:transport}
    \partial_t f + \bm v \cdot \nabla_x f = g \quad \text{in} \quad \mathscr{D}'(\mathbb R\times \mathbb R^N\times \mathbb R^N).
\end{equation}
Depending on the reguarity of the distribution $g$, local averages in $\bm v$ satisfy improve regularity properties in fractional Sobolev spaces in 
space-time:

\begin{thm}[DiPerna, Lions \cite{DiPernaLions1989}]\label{thm:dipernalions}
Let $m$ be a non-negative integer, let $f\in L^2 (\mathbb R\times \mathbb R^N \times \mathbb R^N)$ satisfy \eqref{eq:transport} where $g$ is given by
\begin{equation*}
    g= g_1 + D_v^m g_2
\end{equation*}
and $g_1,\, g_2 \in L^q(B_R \, ; L^p(\mathbb R\times \mathbb R^N_x))$ for all $R<\infty$, where $2(N+1)/(N+3) < p \leq 2$ and $1\leq q \leq 2$. Then,
\begin{equation*}
    \int_{\mathbb R^N} f(t,\bm x , \bm v) \psi(\bm v) \, d\bm v \in H^s(\mathbb R \times \mathbb R^N) \quad \text{for each} \quad \psi \in \mathscr D (\mathbb R^N) \, ,
\end{equation*}
where $s=1/2 (1-\theta) (m+1/q+1/2)^{-1}$ and $\theta = (N+1)(2-p) /2p$.
\end{thm}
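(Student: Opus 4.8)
The plan is to carry out the Fourier-analytic argument of DiPerna and Lions. Write $\widehat{\,\cdot\,}$ for the Fourier transform in the space-time variables $(t,\bm x)\in\mathbb R^{1+N}$ only, with dual variables $(\tau,\bm\xi)$; since $D_v^m$ acts on $\bm v$ alone it commutes with $\widehat{\,\cdot\,}$, so \eqref{eq:transport} becomes the pointwise-in-$(\tau,\bm\xi,\bm v)$ identity $i(\tau+\bm v\cdot\bm\xi)\widehat f=\widehat g_1+D_v^m\widehat g_2$. Fixing $\psi\in\mathscr D(\mathbb R^N)$ and setting $\widehat\rho(\tau,\bm\xi)=\int_{\mathbb R^N}\widehat f(\tau,\bm\xi,\bm v)\psi(\bm v)\,d\bm v$, the goal is $(1+\tau^2+|\bm\xi|^2)^{s/2}\widehat\rho\in L^2(\mathbb R^{1+N})$. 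The obstruction is the resonance set $\{\tau+\bm v\cdot\bm\xi=0\}$, on which the symbol cannot be inverted; the proof exploits the thinness of this set. For a scale $\delta=\delta(\tau,\bm\xi)>0$ to be chosen and a fixed $\chi\in C_c^\infty(\mathbb R)$ equal to $1$ near $0$, I would split $\widehat\rho=\mathrm I+\mathrm{II}$ with $\mathrm I=\int\widehat f\,\psi\,\chi\big(\tfrac{\tau+\bm v\cdot\bm\xi}{\delta}\big)\,d\bm v$ and $\mathrm{II}=\int\widehat f\,\psi\,\Big(1-\chi\big(\tfrac{\tau+\bm v\cdot\bm\xi}{\delta}\big)\Big)\,d\bm v$. (The regime where $|\bm\xi|$ is bounded but $|(\tau,\bm\xi)|$ is large is disposed of separately: there $|\tau+\bm v\cdot\bm\xi|\gtrsim|(\tau,\bm\xi)|$ on $\operatorname{supp}\psi$, so no resonance occurs; hence one may assume $|\bm\xi|\sim|(\tau,\bm\xi)|$.)

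Consider first the $L^2$ case $p=q=2$. On the near-resonance term $\mathrm I$ the equation is useless and one uses only $f\in L^2$: by Cauchy--Schwarz in $\bm v$ together with the fact that the slab $\{|\tau+\bm v\cdot\bm\xi|\lesssim\delta\}$ meets the compact set $\operatorname{supp}\psi$ in a set of measure $\lesssim\delta/|\bm\xi|$, one gets $|\mathrm I|\lesssim\|\widehat f(\tau,\bm\xi,\cdot)\|_{L^2_{\bm v}}(\delta/|\bm\xi|)^{1/2}$. On the off-resonance term $\mathrm{II}$ one inverts the symbol, $\widehat f=\widehat g/(i(\tau+\bm v\cdot\bm\xi))$ on $\{|\tau+\bm v\cdot\bm\xi|\gtrsim\delta\}$, and integrates by parts $m$ times in $\bm v$ to move $D_v^m$ off $\widehat g_2$; since $\nabla_{\bm v}(\tau+\bm v\cdot\bm\xi)=\bm\xi$, each derivative either lands on $(\tau+\bm v\cdot\bm\xi)^{-1}$ (gaining a factor $\bm\xi/(\tau+\bm v\cdot\bm\xi)$), or on the cutoff (gaining $\bm\xi/\delta$ and localization to $\{|\tau+\bm v\cdot\bm\xi|\sim\delta\}$), or harmlessly on $\psi$, so that every resulting term is $\lesssim|\bm\xi|^m|\tau+\bm v\cdot\bm\xi|^{-m-1}$ on $\{|\tau+\bm v\cdot\bm\xi|\gtrsim\delta\}$. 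Decomposing into dyadic slabs and summing (the exponents being negative, the innermost slab dominates) gives $|\mathrm{II}|\lesssim\|\widehat g_1(\tau,\bm\xi,\cdot)\|_{L^2_{\bm v}}\delta^{-1/2}|\bm\xi|^{-1/2}+\|\widehat g_2(\tau,\bm\xi,\cdot)\|_{L^2_{\bm v}}|\bm\xi|^{m-1/2}\delta^{-m-1/2}$. Choosing $\delta=|(\tau,\bm\xi)|^{m/(m+1)}$ balances $\mathrm I$ against the leading ($g_2$) part of $\mathrm{II}$ and yields $|\widehat\rho|\lesssim|(\tau,\bm\xi)|^{-1/(2(m+1))}\big(\|\widehat f\|_{L^2_{\bm v}}+\|\widehat g_1\|_{L^2_{\bm v}}+\|\widehat g_2\|_{L^2_{\bm v}}\big)$; integrating in $(\tau,\bm\xi)$ and using Plancherel (local in $\bm v$, since $\operatorname{supp}\psi$ is compact) gives $\widehat\rho\in H^{1/(2(m+1))}$, which is the claimed $s$ for $p=q=2$.

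For general $1<p\le2$ and $1\le q\le2$ one keeps $f$ in the ambient space $L^2$ and only $g_i$ in the weaker classes, and upgrades the previous estimate by interpolation: the $L^2$ bound above is interpolated, in the space-time variables $(t,\bm x)\in\mathbb R^{1+N}$, against a weaker-integrability endpoint estimate (via Hausdorff--Young / duality, which is where the dimension $N+1$ enters) and, simultaneously, against the velocity integrability exponent $q$. The interpolation weights produce exactly the factor $1-\theta$ with $\theta=(N+1)(2-p)/2p$ and the summand $1/q$ in the denominator $m+1/q+1/2$, so that $s=\tfrac12(1-\theta)(m+1/q+1/2)^{-1}$. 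In particular the restriction $p>2(N+1)/(N+3)$ is precisely the condition $\theta<1$, i.e., the regime in which interpolation still leaves a strictly positive gain $s>0$; at the threshold $\theta\uparrow1$ and $s\downarrow0$. Since $\psi$ is compactly supported, only the local-in-$\bm v$ hypotheses $g_i\in L^q(B_R;L^p)$ are used, and the conclusion is global in $(t,\bm x)$ as is $f\in L^2$.

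The main obstacle is the sharp bookkeeping. In the $L^2$ step one must identify the correct split scale $\delta(\tau,\bm\xi)$ and carry out the $m$ integrations by parts against a genuinely smooth — hence not sharply localized — cutoff, tracking which terms the derivatives hit. In the general case one must arrange the interpolation, in the space-time variables and in the velocity integrability simultaneously, so as to land precisely on the stated exponent, in particular recovering the dimension-dependent $\theta$ rather than a lossy substitute. The slab-measure estimates, the geometric series, and the optimization of $\delta$ are then routine.
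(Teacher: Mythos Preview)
The paper does not supply a proof of this theorem: it is stated as a tool and attributed to DiPerna and Lions \cite{DiPernaLions1989}, with no argument given beyond the citation. Your sketch is therefore not to be compared against anything in the present paper but against the original source, and in that respect it follows the standard Fourier-analytic route of DiPerna--Lions: space-time Fourier transform, near/far-resonance splitting with a smooth cutoff at scale $\delta$, the slab-measure bound $|\{\bm v\in\operatorname{supp}\psi:|\tau+\bm v\cdot\bm\xi|\lesssim\delta\}|\lesssim\delta/|\bm\xi|$ for the near part, inversion of the symbol plus $m$ integrations by parts for the far part, optimization in $\delta$, and finally interpolation in $(p,q)$ to reach the general exponent. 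The $p=q=2$ endpoint computation you give is correct and yields $s=1/(2(m+1))$, matching the statement.

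As a sketch this is faithful to the original argument. Two places where the bookkeeping would need care in a full write-up: first, the choice $\delta=|(\tau,\bm\xi)|^{m/(m+1)}$ balances $\mathrm I$ against the $g_2$ contribution but you should also verify that the $g_1$ contribution is subordinate (it is, since $\delta^{-1/2}\le\delta^{-m-1/2}$ for $\delta\ge1$, and for small frequencies no gain is sought); second, the interpolation step for general $p,q$ is the most delicate part of the DiPerna--Lions proof and your description (``Hausdorff--Young / duality \ldots\ interpolation weights produce exactly the factor $1-\theta$'') is accurate in spirit but compresses a nontrivial complex-interpolation argument that in the original requires some care to set up. None of this is a gap in the approach; it is just where the work lies.
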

\subsubsection*{Velocity moment estimates}

To bound the current density $\bm j$ uniformly we make use of the
velocity moment estimate from kinetic theory. We refer to Lemma 1.8 in \cite{ReinKineticElsevier} for the proof of this result.
\begin{lem} \label{lem:jregularity}
For $k\geq 0$ we denote the kth order moment density and the kth order moment in velocity of a nonnegative, measurable function $f:\mathbb R^6\rightarrow [0,\infty)$ by
\begin{align*}
    m_k(f) (\bm x) \defeq \int_{\mathbb R^3} |\bm v|^k \, f(\bm x,\bm v) \, d \bm v \, ,
\end{align*}
and
\begin{align*}
    M_k(f) \defeq \int_{\mathbb R^3} m_k(f) (\bm x) \, d\bm x = \int \int_{\mathbb R^3 \times \mathbb R^3} |\bm v|^k f(\bm x,\bm v)\, d\bm v \, d\bm x \, .
\end{align*}
Let $1\leq p,q\leq \infty$ with $1/p+1/q = 1$, $0\leq k'\leq k <\infty$ and 
\begin{align*}
    \ell \defeq \frac{k+3/q}{k'+3/q+(k-k')/p} \, .
\end{align*}
If $f\in L^p_+ (\mathbb R^6)$ with $M_k(f)<\infty$ then $m_{k'} (f) \in L^\ell(\mathbb R^3)$ and
\begin{align*}
\| m_{k'}(f)\|_{L^\ell} \leq C \, \| f\|_{L^p}^{(k-k')/(k+3/q)} \, M_k(f)^{(k'+3/q)/(k+3/q)} \, ,
\end{align*}
where $C=C(k,k',p)>0$.
\end{lem}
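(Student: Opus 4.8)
The plan is to reduce the statement to a pointwise-in-$\bm x$ inequality obtained by splitting the velocity integral at an optimally chosen radius, and then to close with a single application of Hölder's inequality in $\bm x$ whose exponents are forced by the very definition of $\ell$. First, for almost every fixed $\bm x$ (so that $f(\bm x,\cdot)\in L^p(\mathbb R^3)$ and $m_k(f)(\bm x)<\infty$, by Fubini) and a radius $R=R(\bm x)>0$ to be chosen, write $m_{k'}(f)(\bm x)=\int_{|\bm v|\le R}|\bm v|^{k'}f\,d\bm v+\int_{|\bm v|>R}|\bm v|^{k'}f\,d\bm v$. On the inner region, Hölder in $\bm v$ with exponents $p,q$ gives $\int_{|\bm v|\le R}|\bm v|^{k'}f\,d\bm v\le\|f(\bm x,\cdot)\|_{L^p}\big(\int_{|\bm v|\le R}|\bm v|^{k'q}\,d\bm v\big)^{1/q}\le C\,\|f(\bm x,\cdot)\|_{L^p}\,R^{k'+3/q}$; on the outer region, the bound $|\bm v|^{k'}=|\bm v|^{k'-k}|\bm v|^{k}\le R^{k'-k}|\bm v|^{k}$ (valid since $k'\le k$) gives $\int_{|\bm v|>R}|\bm v|^{k'}f\,d\bm v\le R^{k'-k}\,m_k(f)(\bm x)$.

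Second, I optimize over $R$. Setting $a=\|f(\bm x,\cdot)\|_{L^p}$, $b=m_k(f)(\bm x)$, $\alpha=k'+3/q$ and $\beta=k-k'\ge 0$, we have $m_{k'}(f)(\bm x)\le C\,a\,R^{\alpha}+b\,R^{-\beta}$ for every $R>0$. Balancing the two terms (and letting $R\downarrow 0$ in the degenerate case $\beta=0$) produces $m_{k'}(f)(\bm x)\le C\,a^{\beta/(\alpha+\beta)}b^{\alpha/(\alpha+\beta)}$, i.e. the pointwise estimate
\[ m_{k'}(f)(\bm x)\le C\,\|f(\bm x,\cdot)\|_{L^p(\mathbb R^3)}^{(k-k')/(k+3/q)}\,m_k(f)(\bm x)^{(k'+3/q)/(k+3/q)}. \]

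Third, I raise this to the power $\ell$, integrate in $\bm x$, and apply Hölder with a conjugate pair $(s_1,s_2)$. Writing $\theta_1=(k-k')/(k+3/q)$ and $\theta_2=(k'+3/q)/(k+3/q)$ (so $\theta_1+\theta_2=1$), the exponents are pinned down by demanding $\theta_1\ell s_1=p$, so that the first factor becomes $\|f\|_{L^p(\mathbb R^6)}^{\theta_1\ell}$ after Fubini, and $\theta_2\ell s_2=1$, so that the second becomes $M_k(f)^{\theta_2\ell}$. The Hölder compatibility $1/s_1+1/s_2=1$ then reads $\ell\big(\theta_1/p+\theta_2\big)=1$, which is exactly the definition of $\ell$; moreover $\theta_1\ell/p=1-\theta_2\ell\in[0,1]$ and $\theta_2\ell\in[0,1]$, so $s_1,s_2\ge 1$ and the step is legitimate. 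Taking the $\ell$-th root yields $\|m_{k'}(f)\|_{L^\ell}\le C\,\|f\|_{L^p}^{(k-k')/(k+3/q)}M_k(f)^{(k'+3/q)/(k+3/q)}$, which is the claim.

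As for difficulties: there is essentially no analytic obstacle — the entire content is the exponent bookkeeping, namely checking that the one-parameter optimization in the second step produces precisely the weights $\theta_1,\theta_2$ and that the Hölder pair in the third step is admissible and reproduces the stated $\ell$. The only genuine care is needed for the degenerate cases, namely $k=k'$ (where $\ell=1$ and the inequality is the trivial identity $\|m_k(f)\|_{L^1}=M_k(f)$) and the endpoint exponents $p=1$ and $p=\infty$ (i.e. $q=1$), where the velocity splitting collapses and the estimate reduces to an elementary moment interpolation or a plain Hölder inequality; I would dispatch these separately at the outset and then treat $1<p<\infty$, $1<q<\infty$ by the scheme above.
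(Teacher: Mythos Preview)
Your proof is correct and is precisely the standard argument for this velocity-moment interpolation lemma: split at a radius $R$, apply H\"older on the inner ball and the trivial weight bound on the outer shell, optimize $R$, and then close with H\"older in $\bm x$ with the exponent pair forced by the definition of $\ell$. The paper does not actually prove the lemma---it simply cites Lemma~1.8 in Rein's survey \cite{ReinKineticElsevier}---and the argument there is exactly the one you outline, including the same degenerate-case caveats.
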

\section{Proof of Theorem \ref{thm:glavni}} \label{section:Theorem1}

The arguments closely follow the strategy from \cite{DiPernaLions1989}, starting from a regularized system which admits global smooth solutions so that the energy estimate provides the requisite uniform bounds. This enables us to apply compactness arguments based on velocity averaging and renormalization to the extended Vlasov equation containing emergent electromagnetic field contribution.

\subsection*{Regularized LLG-VM system}
We first regularize the initial conditions for the VM system, i.e., we consider families 
$f_0^\varepsilon \in \mathscr{D}_+ (\mathbb R^3 \times \mathbb R^3)$ and $\bm E^\varepsilon_0, \, \bm B^\varepsilon_0 \in \mathscr{D}(\mathbb R^3;\mathbb R^3)$ so that
\begin{equation*}
    \int \int_{\mathbb R^3 \times \mathbb R^3} |f_0 - f_0^\varepsilon| (1+|\bm v|^2) + |f_0 - f_0^\varepsilon|^r \; d\bm x \, d\bm v \xrightarrow[]{\varepsilon} 0 \,
\end{equation*}
and
\begin{equation*}
    \int_{\mathbb R^3} |\bm E_0 - \bm E_0^\varepsilon|^2 + |\bm B_0 - \bm B_0^\varepsilon|^2 \, d\bm x \xrightarrow[]{\varepsilon} 0 \, .
\end{equation*}
Moreover, for an integer $l\ge 7$ there exists $\bm m_0^\varepsilon \in H^l(\mathbb R^3;\mathbb S^2)$ such that
\begin{equation*}
    \|\bm m_0 - \bm m_0^\varepsilon\|_{H^2} \xrightarrow[]{\varepsilon} 0 \,,
\end{equation*}
see e.g. \cite{Melcher2012}. Then a regularized system is obtained by
regularizing the current density and the Lorentz force by means of a 
suitable mollifier $K_\varepsilon$, i.e.
\begin{equation}\label{eq:LLG1}
    \partial_t \bm m^\varepsilon = \bm m^{\varepsilon} \times \left( \alpha \, \partial_t \bm m^\varepsilon + \Delta \bm m ^\varepsilon + \Delta^2 \bm m^\varepsilon - h\, \bm{\hat {e}}_3 \right)  - (K_\varepsilon \, \bm j^\varepsilon \cdot \nabla ) \bm m^\varepsilon \,,
\end{equation}
coupled to the regularized VM system
\begin{gather}
    \partial_t f^\varepsilon + \bm v \cdot \nabla_x  f^\varepsilon 
    + (K_\varepsilon \,\bm F^{\varepsilon}) \cdot \nabla_v  f^\varepsilon =0
    \, , \label{eq:Vlasov0} \\[1mm]
    \varepsilon_r \, \partial_t \bm E^\varepsilon - \frac{1}{\mu_r} \, \nabla \times \bm B^\varepsilon = - K_\varepsilon \, \bm j^\varepsilon \, , \quad \partial_t \bm B^\varepsilon + \nabla \times \bm E^\varepsilon= 0 \, , \label{eq:Maxwell}
\end{gather}
where
\begin{gather}
 \bm j^\varepsilon = -\int_{\mathbb R^3} f^\varepsilon \, \bm v \, d\bm v \, , \nonumber \\ 
 \bm F^{\varepsilon} = - ( \bm E^\varepsilon +   \bm e^\varepsilon + \bm v \times( \bm B^\varepsilon +   \bm b^\varepsilon)) \, , \nonumber \\
      e^\varepsilon_i = \bm m^\varepsilon \cdot (\partial_i \bm m^\varepsilon \times \partial_t \bm m^\varepsilon), \quad b^\varepsilon_i = \epsilon^{ijk}\, \bm m^\varepsilon \cdot (\partial_j \bm m^\varepsilon \times \partial_k \bm m^\varepsilon). \label{eq:regularizedemergent}
\end{gather}
With a slight abuse of notation, the operator $K_\varepsilon$ is a convolution operator defined by
\begin{equation*}
    K_\varepsilon f (\bm x) = \int_{\mathbb R^3} K_\varepsilon(\bm x-\bm y)\, f(\bm y) \, d\bm y \quad \text{for} \quad f\in L^1_{loc}(\mathbb R^3)\, ,
\end{equation*}
where $K_\varepsilon$ is a standard mollifier satisfying
\begin{gather*}
    K_\varepsilon \in C_c^\infty(\mathbb R^3),\quad \mathrm{supp}\, K_\varepsilon \subseteq \overline{ B_{\varepsilon}}, \quad \int_\mathbb{R^3} K_\varepsilon\, d\bm x = 1,\\ 
    \quad K_\varepsilon(\bm x) \geq 0 \quad \text{and} \quad K_\varepsilon(\bm x) = K_\varepsilon (-\bm x) \text{ for } \bm x\in \mathbb R^3 \, .
\end{gather*}
From $K_\varepsilon (\bm x) = K_\varepsilon (-\bm x)$ it follows that the operator $K_\varepsilon$ is self-adjoint with respect to the $L^2$ scalar product.

Equation \eqref{eq:LLG1} is convenient due to the divergence structure of the highest order term, i.e. we have
\begin{align}\label{eq:divergencestructure}
    \bm m^\varepsilon \times \Delta ^2 \bm m^\varepsilon &= \Delta (\bm m^\varepsilon \times \Delta \bm m^\varepsilon) - 2 \sum_{k=1}^3 \partial_k \bm m^\varepsilon \times \partial_k (\Delta \bm m^\varepsilon)\notag \\
    &= \Delta (\bm m^\varepsilon \times \Delta \bm m^\varepsilon) - 2 \sum_{k=1}^3 \partial_k \left(\partial_k \bm m^\varepsilon \times \Delta \bm m^\varepsilon \right)\, .
\end{align}

\subsection*{Short-time solution to the regularized system}
\begin{prop}
Let $\varepsilon > 0 $ be fixed. Let $l\geq 7$ be an integer, $R>0$ and suppose we have initial conditions
\begin{gather*}
    f_0^\varepsilon \in H^{l-2} (\mathbb R^3\times \mathbb R^3), \; \mathrm{supp} \, f_0^\varepsilon \subseteq B_R \times B_R \, ,\\
    \bm m^\varepsilon_0 \in H^l(\mathbb R^3;\mathbb S^2) \,, \; \bm E_0^\varepsilon, \; \bm B_0^\varepsilon \in H^{l-2}(\mathbb R^3;\mathbb R^3).
\end{gather*}
Then there exists $T^*>0$ and a local solution for \eqref{eq:LLG1}-\eqref{eq:Maxwell} such that
\begin{gather*}\label{eq:epsilonOgrade}
    \begin{gathered}
    \bm m^\varepsilon \in C([0,T^*];H^l(\mathbb R^3;\mathbb S^2))\quad \text{and} \quad \partial_t \bm m^\varepsilon \in  C ( [0,T^*] ; H^{l-4}(\mathbb R^3;\mathbb R^3))\, , \\
    f^\varepsilon \in C([0,T^*];H^{l-2} (\mathbb R^3\times \mathbb R^3) ) \cap C^1([0,T^*];H^{l-3}(\mathbb R^3\times \mathbb R^3)), \\
    \bm E^\varepsilon, \; \bm B^\varepsilon \in C([0,T^*];H^{l-2}(\mathbb R^3;\mathbb R^3)) \cap C^1 ( [0,T^*] ; H^{l-3}(\mathbb R^3;\mathbb R^3)) \, ,
    \end{gathered}
\end{gather*}
and $\mathrm{supp} \, f^\varepsilon(t) \subseteq B_{2R} \times B_{2R}$ for all $t\in[0,T^*]$.
\end{prop}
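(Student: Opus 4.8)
The plan is to set up a fixed-point / iteration scheme that decouples the magnetization equation from the Vlasov--Maxwell block, using the smooth global LLG theory (Theorem~\ref{thm:LLGglobalno}) on the one side and the classical local existence theory for Vlasov--Maxwell (Wollman, or the symmetric-hyperbolic/characteristic-flow machinery recalled in Section~\ref{subsection:VM}) on the other. Concretely: fix a time $T>0$ to be shrunk later, let $X_T$ be the space of tuples $(\bm m,f,\bm E,\bm B)$ with the regularity claimed in the proposition, equipped with a norm measuring $\bm m$ in $C([0,T];H^l)$, $f$ in $C([0,T];H^{l-2})$ with compact $\bm v$-support, and $\bm E,\bm B$ in $C([0,T];H^{l-2})$. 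Define a map $\Phi$ on (a closed ball of) $X_T$ as follows: given $(\tilde{\bm m},\tilde f,\tilde{\bm E},\tilde{\bm B})$, first form $\tilde{\bm j}^\varepsilon=-\int \tilde f\,\bm v\,d\bm v$ and solve the regularized LLG equation \eqref{eq:LLG1} with this fixed current via Theorem~\ref{thm:LLGglobalno} (the mollified current $K_\varepsilon\tilde{\bm j}^\varepsilon$ is smooth and, since $\tilde f$ is compactly supported, lies in $C([0,T];H^{l-2})$, which is exactly the hypothesis of that theorem); this produces $\bm m^\varepsilon$ and hence, via \eqref{eq:regularizedemergent}, the emergent fields $\bm e^\varepsilon,\bm b^\varepsilon$, which inherit $C([0,T];H^{l-2}\cap L^\infty)$ regularity from $\bm m^\varepsilon$ because the energy/Sobolev estimates of Section~2 give $\partial_t\bm m^\varepsilon\in C([0,T];H^{l-4})$ with $l-4\ge 3$. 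Then solve the linear Vlasov transport equation \eqref{eq:Vlasov0} with the mollified, now \emph{given}, Lorentz force $K_\varepsilon\bm F^\varepsilon$ built from $(\bm E^\varepsilon,\bm B^\varepsilon,\bm e^\varepsilon,\bm b^\varepsilon)$ --- wait, rather: solve the coupled linear VM block \eqref{eq:Vlasov0}--\eqref{eq:Maxwell} with $\bm e^\varepsilon,\bm b^\varepsilon$ as \emph{source/forcing terms} (they are known once $\bm m^\varepsilon$ is known), which is a linear hyperbolic system for $(f^\varepsilon,\bm E^\varepsilon,\bm B^\varepsilon)$; solve it by the characteristic-flow representation recalled in Section~\ref{subsection:VM} for $f^\varepsilon$ coupled with Duhamel for Maxwell, or directly by the linear symmetric-hyperbolic theory. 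Set $\Phi(\tilde{\bm m},\tilde f,\tilde{\bm E},\tilde{\bm B})=(\bm m^\varepsilon,f^\varepsilon,\bm E^\varepsilon,\bm B^\varepsilon)$.

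\textbf{Key steps in order.} (i) \emph{A priori bounds / invariance of a ball.} Show that for $T^*=T^*(\varepsilon,R,l,\text{data})$ small enough, $\Phi$ maps the closed ball $B_M\subset X_{T^*}$ of radius $M$ (with $M$ fixed in terms of the initial data) into itself. For LLG this is Theorem~\ref{thm:LLGglobalno} plus continuity in $T$ of the Gronwall bound \eqref{eq:Gronwallograda1}; for the VM block it is the standard energy estimate for the linear system (multiply Maxwell by $(\bm E^\varepsilon,\bm B^\varepsilon)$, use the $L^p$-conservation and moment structure for $f^\varepsilon$) together with propagation of the $\bm v$-support: along characteristics $\dot{\bm V}=K_\varepsilon\bm F^\varepsilon$ with $\|K_\varepsilon\bm F^\varepsilon\|_{L^\infty}$ bounded on $B_M$, so $|\bm V(s)|\le R + C T^*$ and the spatial support grows by at most $\int|\bm V|\le R+CT^*\cdot$const, whence $\operatorname{supp}f^\varepsilon(t)\subseteq B_{2R}\times B_{2R}$ once $T^*$ is small. (ii) \emph{Contraction.} Estimate the difference $\Phi(u_1)-\Phi(u_2)$ in a \emph{weaker} norm (one derivative less, i.e. $C([0,T^*];H^{l-1})\times C([0,T^*];H^{l-3})\times C([0,T^*];H^{l-3})$) --- this is the usual device since the nonlinearities lose a derivative --- showing a bound $\tfrac12\|u_1-u_2\|_{\text{weak}}$ for $T^*$ small; the mollifier $K_\varepsilon$ is crucial here, since it makes $\bm j^\varepsilon\mapsto K_\varepsilon\bm j^\varepsilon$ and $\bm F^\varepsilon\mapsto K_\varepsilon\bm F^\varepsilon$ bounded from low-regularity spaces into $C^\infty_x$, so the transport/LLG differences can be closed without derivative loss in the $\bm x$ variable. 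Combined with the ball invariance in the strong norm and interpolation, Banach's fixed point theorem in the completion gives a unique fixed point $(\bm m^\varepsilon,f^\varepsilon,\bm E^\varepsilon,\bm B^\varepsilon)$ in $B_M$, which is the desired local solution. (iii) \emph{Regularity upgrade and time-derivative regularity.} From the equations read off $\partial_t\bm m^\varepsilon\in C([0,T^*];H^{l-4})$ (from \eqref{eq:LLG_fourth}), $\partial_t f^\varepsilon\in C([0,T^*];H^{l-3})$ (from \eqref{eq:Vlasov0}: $\bm v\cdot\nabla_x$ loses one derivative, $K_\varepsilon\bm F^\varepsilon\cdot\nabla_v$ loses one in $\bm v$ but is smooth in $\bm x$), and $\partial_t\bm E^\varepsilon,\partial_t\bm B^\varepsilon\in C([0,T^*];H^{l-3})$ from \eqref{eq:Maxwell}; also verify $\nabla\cdot\bm B^\varepsilon=0$ and Gauss's law are propagated (take divergence of \eqref{eq:Maxwell}, use the continuity equation $\partial_t\rho^\varepsilon+\nabla\cdot\bm j^\varepsilon=0$ which follows from integrating \eqref{eq:Vlasov0} in $\bm v$).

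\textbf{Main obstacle.} The delicate point is the contraction estimate (ii), specifically controlling the difference of the emergent fields $\bm e^\varepsilon_1-\bm e^\varepsilon_2$ in terms of $\bm m^\varepsilon_1-\bm m^\varepsilon_2$: since $\bm e^\varepsilon_i$ contains $\partial_t\bm m^\varepsilon_i$, one needs a Lipschitz bound on the time derivative, i.e. $\partial_t\bm m^\varepsilon_1-\partial_t\bm m^\varepsilon_2$ in $C([0,T^*];H^{l-4})$, in terms of the current difference and $\bm m^\varepsilon_1-\bm m^\varepsilon_2$. This requires differencing the quasilinear equation \eqref{eq:LLG_fourth}, which is routine but must be done carefully because $A(\bm m)$ depends on $\bm m$; one writes $A(\bm m_1)\Delta^2\bm m_1-A(\bm m_2)\Delta^2\bm m_2 = A(\bm m_1)\Delta^2(\bm m_1-\bm m_2)+(A(\bm m_1)-A(\bm m_2))\Delta^2\bm m_2$ and uses the uniform ellipticity of $A$ together with the strong-norm bound on $\bm m_2$ from step~(i); the resulting linear fourth-order parabolic estimate for the difference, in $H^{l-1}$, closes with a factor $CT^*$. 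The second mild nuisance is bookkeeping the $\bm v$-support propagation so that the fixed-point space is genuinely invariant, but this is quantitative and harmless. None of this involves ideas beyond those already deployed in Section~2 and Section~\ref{subsection:VM}; the full details follow \cite{DiPernaLions1989} and \cite{Wollman1984} with the LLG part supplied by Theorem~\ref{thm:LLGglobalno}, and are carried out in \cite{Tvrtko_Diss}.
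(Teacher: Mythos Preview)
Your approach is sound and close in spirit to the paper's, but the paper takes a slightly different route that sidesteps precisely what you flag as the ``main obstacle''. Rather than setting up a Banach fixed point with contraction in a weaker norm, the paper runs an explicit Picard iteration: given $\bm j^\varepsilon_{n-1}$, solve LLG via Theorem~\ref{thm:LLGglobalno} to get $\bm m^\varepsilon_n$ and hence $\bm e^\varepsilon_n,\bm b^\varepsilon_n$; solve Maxwell with source $K_\varepsilon\bm j^\varepsilon_{n-1}$ and Vlasov with Lorentz force $K_\varepsilon\bm F^\varepsilon_{n-1}$ (both via Kato's symmetric-hyperbolic theorem, after Wollman's modification to obtain integrable coefficients) to get $f^\varepsilon_n,\bm E^\varepsilon_n,\bm B^\varepsilon_n$ and an updated current $\bm j^\varepsilon_n$. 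Uniform bounds on $[0,T^*]$ and support control are obtained as in \cite{Wollman1984}, and then, since $l\ge 7$ is large enough for Sobolev embedding into $C^1$ in space, Ascoli gives pointwise space-time compactness. The limit is first obtained with slightly reduced regularity, then upgraded by resolving each equation with the limit coefficients (again via Theorem~\ref{thm:LLGglobalno} and Kato).

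The practical difference is that the paper never needs a Lipschitz estimate for $\partial_t\bm m^\varepsilon$ in terms of the current, nor a difference estimate for the quasilinear LLG operator; compactness replaces contraction. Your route would also work, but requires exactly the delicate step you single out. A minor point: your description of the map $\Phi$ is somewhat tangled (the ``wait, rather'' self-correction) about whether the VM block within one application of $\Phi$ is solved as a coupled system or fully linearized through the tilde data; the paper's iteration is fully decoupled, with every nonlinearity evaluated at the previous iterate, which keeps each step linear.
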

\begin{proof}
We set up an iteration scheme: Starting from $\bm j_0^{\varepsilon}= - \int_{\mathbb{R}^3} \bm v f_0^{\varepsilon} \, d\bm v$, there exists,
by Theorem \ref{thm:LLGglobalno}, a global (unique) solution
$\bm m_1^{\varepsilon}$ of \eqref{eq:LLG1}. The solution gives rise to emergent fields $\bm e_1^{\varepsilon}$ and $\bm b_1^{\varepsilon}$ according to \eqref{eq:emergent_fields}. Moreover, by virtue of Theorem \RN{1} from \cite{Kato1975}, there exist unique $\bm E_1^{\varepsilon}$ and $\bm B_1^{\varepsilon}$ solving \eqref{eq:Maxwell} for the given initial fields. After changing the equation \eqref{eq:Vlasov0} like in \cite{Wollman1984} to obtain integrable coefficients, Theorem \RN{1} from \cite{Kato1975} provides a unique global
solution $f_1^{\varepsilon}$ with the total Lorentz force $\bm F_0^{\varepsilon}$ for the given initial distribution. Since we can prove that the support of $f_1^{\varepsilon}$ remains bounded for finite time, it coincides with the solution to equation \eqref{eq:Vlasov0}, providing an update $\bm j_1^{\varepsilon}$. Hence, we arrive to the following iterating scheme with the LLG equation
\begin{align*}
    \begin{aligned}
    \partial_t \bm m^\varepsilon_n = \bm m^{\varepsilon}_n \times \left( \alpha \, \partial_t \bm m^\varepsilon_n + \Delta \bm m ^\varepsilon_n + \Delta^2 \bm m^\varepsilon_n - h\, \bm{\hat {e}}_3 \right)  - (K_\varepsilon \, \bm j^\varepsilon_{n-1} \cdot \nabla ) \bm m^\varepsilon_n \,,
    \end{aligned}
\end{align*}
the Vlasov equation
\begin{equation*}
    \partial_t f_n^\varepsilon + \bm v \cdot \nabla_x \, f_n^\varepsilon + (K_\varepsilon \bm F_{n-1}^{\varepsilon}) \cdot \nabla_v\, f_n^\varepsilon = 0 \, 
\end{equation*}
with Lorentz force $\bm F_{n-1}^{\varepsilon} =  - \left( \bm E^\varepsilon_{n-1} + \bm e^\varepsilon_{n-1} + \bm v \times ( \bm B^\varepsilon_{n-1} +  \bm b_{n-1}^\varepsilon) \right)$ and Maxwell equations
\begin{equation*}
    \varepsilon_r \, \partial_t \bm E_n^\varepsilon - \frac{1}{\mu_r} \, \nabla \times \bm B_{n}^\varepsilon = - K_\varepsilon \, \bm j_{n-1}^\varepsilon \, , \quad
    \partial_t \bm B_n^\varepsilon + \nabla \times \bm E_{n}^\varepsilon = 0 \, .
\end{equation*}
The smoothing properties of $ K_\varepsilon $, the compact support of $f_{n}^\varepsilon$, and the fact that $H^l(\mathbb R^3)$ is an algebra imply
\begin{gather*}
K_\varepsilon \, \bm j_{n-1}^\varepsilon , K_\varepsilon \, \bm E^\varepsilon_{n-1},  K_\varepsilon \, \bm e^\varepsilon_{n-1}, K_\varepsilon \, \bm B^\varepsilon_{n-1} , K_\varepsilon \, \bm b^\varepsilon_{n-1}\in C([0,\infty);H^{l-2}(\mathbb R^3 ; \mathbb R^3)) \, .
\end{gather*} 
Hence we find sequences such that
\begin{gather*}
    \bm m_{n}^\varepsilon  \in C([0,\infty); H^l(\mathbb R^3;\mathbb S^2)) \quad \text{and} \quad \partial_t \bm m_{n}^\varepsilon \in  C([0,\infty);H^{l-4}(\mathbb R^3;\mathbb R^3)) \, ,\\
    f^\varepsilon_{n} \in C([0,\infty); H^{l-2}(\mathbb R^3 \times \mathbb R^3))\cap C^1([0,\infty); H^{l-3}(\mathbb R^3 \times \mathbb R^3)) \, , \\
    \bm E^\varepsilon_{n}, \, \bm B^\varepsilon_{n} \in C([0,\infty);\, H^{l-2}(\mathbb R^3;\mathbb R^3))\cap C^1([0,\infty);\, H^{l-3}(\mathbb R^3;\mathbb R^3))\,.
\end{gather*}
It can be shown by arguments similar to \cite{Wollman1984} that there exists some terminal time $T^*>0$ small enough such that this sequence remains bounded in their respective function spaces and $\mathrm{supp} f^\varepsilon_n \subseteq B_{2R}\times B_{2R}$ uniformly in $n\,$. Since $l\geq 7$ is large enough, we get pointwise compactness in space-time from Ascoli's Theorem. The limit functions
\begin{gather*}
    \begin{gathered}
    \bm m^\varepsilon  \in L^\infty((0,T^*);H^l(\mathbb R^3;\mathbb S^2)) \cap \dot W^{1,\infty}( (0,T^*) ; H^{l-4}(\mathbb R^3;\mathbb R^3))\, ,\\
    f^\varepsilon \in L^\infty((0,T^*); H^{l-2}(\mathbb R^3\times\mathbb R^3))\cap W^{1,\infty} ( (0,T^*) ; H^{l-3}(\mathbb R^3 \times \mathbb R^3)) \, , \\
    \bm E^\varepsilon, \; \bm B^\varepsilon \in L^\infty((0,T^*);H^{l-2}(\mathbb R^3;\mathbb R^3)) \cap W^{1,\infty} ( (0,T^*) ; H^{l-3}(\mathbb R^3;\mathbb R^3)) \, .
    \end{gathered}
\end{gather*}
solve the regularized system \eqref{eq:LLG1}-\eqref{eq:Maxwell}. In particular we have that
\begin{gather*}
 \bm m^\varepsilon  \in C([0,T^*];H^{l-4}(\mathbb R^3;\mathbb S^2)) \, , \\
    f^\varepsilon \in C([0,T^*];H^{l-3}(\mathbb R^3\times \mathbb R^3)) \, , \\
    \bm E^\varepsilon,\bm B^\varepsilon \in C([0,T^*];  H^{l-3} (\mathbb R^3, \mathbb R^3))\, .
\end{gather*}
Using the compactness of the support of $f^\varepsilon$ we get $\bm j^\varepsilon \in C([0,T^*];H^{l-3}(\mathbb R^3))$. By making use of the mollifier $K_\varepsilon$ again  we obtain from Theorem \ref{thm:LLGglobalno} the required regularity for $\bm m^\varepsilon$. Similarly using Theorem \RN{1} from \cite{Kato1975} we get that %
$f^\varepsilon, \bm E^\varepsilon$ and $\bm B^\varepsilon$ belong to spaces stated in the Proposition.
\end{proof}
\subsection*{Global solution to the regularized system}
Once we have a short time solution to the regularized system \eqref{eq:LLG1}-\eqref{eq:Maxwell} we can use the energy argument to extend the solution to a global one.

\begin{lem}
Let $\varepsilon>0$ be fixed. Then for the regularized system of equations \eqref{eq:LLG1}-\eqref{eq:Maxwell} we have the following energy-dissipation law
\begin{align}\label{eq:energyestimate2}
    \alpha \int_0^T  \|\partial_t \bm m^\varepsilon\|^2_{L^2} \, dt + \Big[ \mathbb{E}(f^\varepsilon ,\bm E^\varepsilon, \bm B^\varepsilon, \bm m^\varepsilon)\Big]_{t=0}^T = 0 \, ,
\end{align}
where the total energy is defined in \eqref{eq:abstract_energy}. 
\end{lem}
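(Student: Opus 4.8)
The plan is to establish the energy identity \eqref{eq:energyestimate2} by testing each equation of the regularized system with its natural multiplier and summing the resulting identities, exploiting the fact that the emergent field contributions cancel against the spin-transfer torque. The regularity obtained in the previous Proposition (with $l\ge 7$) is more than sufficient to justify every integration by parts and every use of Fubini's theorem below, so I will treat all manipulations as licit.

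\textbf{Step 1: the magnetic energy.} First I would reproduce the argument of Lemma \ref{lem:energija}'s companion identity \eqref{eq:LLG_energy_1} for the regularized equation \eqref{eq:LLG1}: testing \eqref{eq:LLG1} against $\bm m^\varepsilon\times\partial_t\bm m^\varepsilon$ (equivalently, against $-\bm h_{\rm eff}^\varepsilon+\alpha\partial_t\bm m^\varepsilon$ after using $|\bm m^\varepsilon|=1$) yields
\[
\frac{d}{dt}E(\bm m^\varepsilon)+\alpha\|\partial_t\bm m^\varepsilon\|_{L^2}^2 = -\big\langle (K_\varepsilon\bm j^\varepsilon\cdot\nabla)\bm m^\varepsilon,\ \partial_t\bm m^\varepsilon\big\rangle\cdot(-1)\, ,
\]
more precisely the right-hand side is $-\langle K_\varepsilon\bm j^\varepsilon, \bm e^\varepsilon_{K}\rangle$ where $\bm e^\varepsilon_K$ has components $(K_\varepsilon\bm j^\varepsilon\cdot\nabla)\bm m^\varepsilon\cdot(\ldots)$; the key algebraic point is that the transport term contracted with $\partial_t\bm m^\varepsilon$ produces exactly the emergent electric field paired with the (mollified) current. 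Concretely, since $\partial_t\bm m^\varepsilon = \bm m^\varepsilon\times(\ldots) - (K_\varepsilon\bm j^\varepsilon\cdot\nabla)\bm m^\varepsilon$ and the cross-product part is orthogonal to the transport part only modulo the effective field, one arrives at
\[
\frac{d}{dt}E(\bm m^\varepsilon)+\alpha\|\partial_t\bm m^\varepsilon\|_{L^2}^2 = -\big\langle K_\varepsilon\bm j^\varepsilon,\ \bm e^\varepsilon\big\rangle\, ,
\]
using the self-adjointness of $K_\varepsilon$ to move the mollifier onto $\bm j^\varepsilon$ and the definition $e^\varepsilon_i=\bm m^\varepsilon\cdot(\partial_i\bm m^\varepsilon\times\partial_t\bm m^\varepsilon)$ from \eqref{eq:regularizedemergent}.

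\textbf{Step 2: the kinetic and field energies.} Next I would multiply the Vlasov equation \eqref{eq:Vlasov0} by $\tfrac12|\bm v|^2$ and integrate in $(\bm x,\bm v)$. The transport term $\bm v\cdot\nabla_x f^\varepsilon$ integrates to zero, and the force term, after integrating by parts in $\bm v$, gives $\int\int (K_\varepsilon\bm F^\varepsilon)\cdot\bm v\, f^\varepsilon$; the magnetic parts $\bm v\times(\bm B^\varepsilon+\bm b^\varepsilon)$ drop out since they are perpendicular to $\bm v$, leaving
\[
\frac{d}{dt}\int\int\frac{|\bm v|^2}{2}f^\varepsilon = -\int_{\mathbb R^3}(K_\varepsilon(\bm E^\varepsilon+\bm e^\varepsilon))\cdot\Big(\int_{\mathbb R^3}\bm v f^\varepsilon\,d\bm v\Big)d\bm x = \big\langle \bm E^\varepsilon+\bm e^\varepsilon,\ K_\varepsilon\bm j^\varepsilon\big\rangle\, ,
\]
again using self-adjointness of $K_\varepsilon$ and $\bm j^\varepsilon=-\int\bm v f^\varepsilon$. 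For the electromagnetic energy I would test the first Maxwell equation in \eqref{eq:Maxwell} with $\bm E^\varepsilon$ and the second with $\tfrac1{\mu_r}\bm B^\varepsilon$, add, and use that $\langle\nabla\times\bm B^\varepsilon,\bm E^\varepsilon\rangle=\langle\bm B^\varepsilon,\nabla\times\bm E^\varepsilon\rangle$ to obtain
\[
\frac{d}{dt}\ \frac12\Big(\varepsilon_r\|\bm E^\varepsilon\|_{L^2}^2+\frac1{\mu_r}\|\bm B^\varepsilon\|_{L^2}^2\Big) = -\big\langle K_\varepsilon\bm j^\varepsilon,\ \bm E^\varepsilon\big\rangle\, .
\]

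\textbf{Step 3: summation and cancellation.} Adding the three identities from Steps 1--2, the terms $-\langle K_\varepsilon\bm j^\varepsilon,\bm e^\varepsilon\rangle$, $+\langle\bm E^\varepsilon+\bm e^\varepsilon,K_\varepsilon\bm j^\varepsilon\rangle$, and $-\langle K_\varepsilon\bm j^\varepsilon,\bm E^\varepsilon\rangle$ cancel identically, yielding $\tfrac{d}{dt}\mathbb E(f^\varepsilon,\bm E^\varepsilon,\bm B^\varepsilon,\bm m^\varepsilon)+\alpha\|\partial_t\bm m^\varepsilon\|_{L^2}^2=0$; integrating from $0$ to $T$ gives \eqref{eq:energyestimate2}. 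The main obstacle I anticipate is not any single estimate but getting the \emph{signs and the placement of the mollifier} exactly right in Step 1: one must carefully verify that contracting the transport term $(K_\varepsilon\bm j^\varepsilon\cdot\nabla)\bm m^\varepsilon$ against $\partial_t\bm m^\varepsilon$ reproduces precisely $\langle K_\varepsilon\bm j^\varepsilon,\bm e^\varepsilon\rangle$ with the correct sign — this relies on the scalar triple product identity $\partial_t\bm m^\varepsilon\cdot\big((\bm a\cdot\nabla)\bm m^\varepsilon\big)$ being recast via $|\bm m^\varepsilon|^2=1$ (so that $\partial_t\bm m^\varepsilon$ and $\partial_i\bm m^\varepsilon$ are both tangent to $\mathbb S^2$) and then expressed through the emergent field components. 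A secondary point is that the mollifier in \eqref{eq:LLG1} acts on $\bm j^\varepsilon$ whereas in \eqref{eq:Maxwell} it likewise acts on $\bm j^\varepsilon$; since $K_\varepsilon$ is self-adjoint, all three coupling terms genuinely involve the \emph{same} quantity $K_\varepsilon\bm j^\varepsilon$ paired against $\bm e^\varepsilon$, $\bm E^\varepsilon$, $\bm E^\varepsilon+\bm e^\varepsilon$ respectively, which is what makes the cancellation exact rather than merely approximate — this is the structural reason the regularization preserves the energy law with an equality, and it should be emphasized.
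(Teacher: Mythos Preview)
Your proposal is correct and follows exactly the paper's approach: test \eqref{eq:LLG1} with $\bm m^\varepsilon\times\partial_t\bm m^\varepsilon$, multiply \eqref{eq:Vlasov0} by $|\bm v|^2$, pair \eqref{eq:Maxwell} with $(\bm E^\varepsilon,\mu_r^{-1}\bm B^\varepsilon)$, and sum using the self-adjointness of $K_\varepsilon$. The ``main obstacle'' you flag in Step~1 is in fact the one-line scalar triple product computation $\big\langle (K_\varepsilon\bm j^\varepsilon\cdot\nabla)\bm m^\varepsilon,\,\bm m^\varepsilon\times\partial_t\bm m^\varepsilon\big\rangle=-\langle K_\varepsilon\bm j^\varepsilon,\bm e^\varepsilon\rangle$, which you can state cleanly instead of the convoluted intermediate expressions; also note that in Step~1 the mollifier is already on $\bm j^\varepsilon$, so self-adjointness is only genuinely needed in Step~2.
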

\begin{proof}
We note that the obtained local solution from the previous chapter is smooth and all of the calculus below is therefore rigorous. We multiply the Vlasov equation \eqref{eq:Vlasov0} by $|\bm v|^2$ and integrate by parts to get
\begin{equation}\label{eq:Vlasov}
   \frac{1}{2}\, \frac{d}{dt}  \int \int_{\mathbb R^3 \times \mathbb R^3} f^\varepsilon \, |\bm v|^2 \, d\bm v \, d\bm x = \langle \bm j^\varepsilon , \, K_\varepsilon \, \bm e^\varepsilon + K_\varepsilon \, \bm E^\varepsilon \rangle \, .
\end{equation}
Multiplying the Maxwell's equations \eqref{eq:Maxwell} by $\bm E^\varepsilon$ and $\bm B^\varepsilon /\mu_r$ respectively and integrating we get
\begin{equation}\label{eq:Maxwell1}
   \frac{1}{2}\, \frac{d}{dt} \int_{\mathbb R^3} \varepsilon_r \, |\bm E^\varepsilon|^2 + \frac{1}{\mu_r} \, |\bm B^\varepsilon|^2 \, d\bm x  = -\langle K_\varepsilon \, \bm j^\varepsilon , \, \bm E^\varepsilon \rangle \, .
\end{equation}
We then use $\bm m^\varepsilon \times \partial_t \bm m^\varepsilon $ as a test function for \eqref{eq:LLG1} and integrate by parts to get
\begin{equation}\label{eq:LL}
    \frac{1}{2}\,\frac{d}{dt} E(\bm m^\varepsilon) + \alpha \int_{\mathbb R^3} |\partial_t \bm m^\varepsilon |^2 \, d\bm x = - \langle K_\varepsilon \, \bm j^\varepsilon,\,  \bm e^\varepsilon \rangle \, .
\end{equation}
Since $K_\varepsilon$ is self-adjoint, adding \eqref{eq:Vlasov}, \eqref{eq:Maxwell1} and \eqref{eq:LL} together and integrating in time we obtain the energy estimate \eqref{eq:energyestimate2}.
\end{proof}

We would now like to show that our local solution does not explode at any arbitrary $T>0$ in order to extend the solution from $[0,T^*]$ to $[0,\infty)$. In particular, \eqref{eq:energyestimate2} yields an estimate for
\begin{equation*}
    \int \int_{\mathbb R^3 \times \mathbb R^3} f^{\varepsilon}(t) |\bm v|^2 \, d\bm v \, d \bm x \leq C  \quad \text{for all} \quad t\in [0,T)\, .
\end{equation*}
We recall that from the method of characteristics, we have $f^\varepsilon\geq 0$ and
\begin{equation}\label{eq:LpConservation}
    \|f^\varepsilon (t) \|_{L^p} = \| f_0^\varepsilon \|_{L^p} \quad \text{for all} \quad t \in [0,T), \; p\in[1,\infty] \, .
\end{equation}
By a simple Holder's inequality we then get $\|\bm j^\varepsilon (t) \|_{L^1}\leq C$ for all $t\in [0,T)$. We therefore know that $K_\varepsilon\, \bm j^\varepsilon$ remains bounded in $C([0,T);H^{l-2}(\mathbb R^3;\mathbb R^3))$. Then from \eqref{eq:Gronwallograda1} $\bm m^\varepsilon$ remains bounded in $C([0,T);H^l(\mathbb R^3;\mathbb S^2)$, more precisely
\begin{equation*}
    \|\bm m^\varepsilon(t) - \bm{\hat {e}}_3\|_{H^l} \leq C(T,\varepsilon) \quad \text{for all} \quad t\in [0,T)\, .
\end{equation*}
We use Theorem \RN{1} from \cite{Kato1975} to get that $\bm E^\varepsilon, \, \bm B^\varepsilon$ remain bounded in $C([0,T);H^{l-2}(\mathbb R^3 \times \mathbb R^3))$. From the characteristic equations we then get that $ \mathrm{supp} f^\varepsilon$ remains bounded for finite time $T>0$. We can now use Theorem \RN{1} from \cite{Kato1975} again to obtain that $f^\varepsilon$ is bounded in $C([0,T);H^{l-2}(\mathbb R^3 \times \mathbb R^3).$ Our solution can therefore be extended by continuity to a global solution to the regularized LLG-VM system \eqref{eq:LLG1}-\eqref{eq:Maxwell}.

\subsection*{Compactness} 
In this section, we finish the proof of Theorem \ref{thm:glavni} by passing to the limit $\varepsilon\rightarrow 0^+$. By this we mean we consider some sequence $\varepsilon_k\rightarrow 0^+$ and its subsequences when necessary without relabeling for simplicity. 

From the energy-dissipation law \eqref{eq:energyestimate2} and \eqref{eq:LpConservation}, the solutions to the regularized system \eqref{eq:LLG1}-\eqref{eq:Maxwell} given in the previous section are bounded, uniformly in $\varepsilon$, in their respective function spaces
\begin{gather}
    \bm m^\varepsilon \in C([0,\infty);H^2(\mathbb R^3;\mathbb R^3)) \cap \dot H^1((0,\infty);L^2(\mathbb R^3;\mathbb R^3)) \, , \label{eq:mUniformbound} \\
    f^\varepsilon \in C([0,\infty);L^1\cap L^r(\mathbb R^3\times \mathbb R^3) ) \, , \quad |\bm v|^2 f^\varepsilon \in C([0,\infty);L^1(\mathbb R^3\times \mathbb R^3)) \, ,  \label{eq:fUniformbound}\\
    \bm E^\varepsilon, \; \bm B^\varepsilon \in C([0,\infty);L^2(\mathbb R^3;\mathbb R^3)) \, . \label{eq:EBUniformbound}
\end{gather}
Together with Sobolev embedding, we obtain the following uniform bounds for emergent electromagnetic fields given by \eqref{eq:regularizedemergent}
\begin{align}
\begin{aligned}\label{eq:ebUniformbound}
  \bm e^\varepsilon &\in L^2((0,\infty)\, ; L^{3/2}(\mathbb R^3 ;\mathbb R^3)) \, , \\
\bm b^\varepsilon &\in C ([0,\infty) \, ; L^3(\mathbb R^3;\mathbb R^3)) \, .
\end{aligned}
\end{align}
Let 
\begin{equation*}
    \ell \defeq \frac{5\,r-3}{4\,r-2} > \frac{6}{5} \, .
\end{equation*}
Then from Lemma \ref{lem:jregularity} we have
\begin{align*}
    \| \bm j^\varepsilon (t) \|_{L^\ell} \leq C\, \| f^\varepsilon(t) \|_{L^r}^{r/(5r-3)} \left (\int \int _{\mathbb R^3 \times \mathbb R^3} f^\varepsilon(t) |\bm v|^2 \, d\bm v \, d\bm x \right) ^{(4r-3)/(5r-3)} \, ,
\end{align*} 
and thus we get the uniform bound for
\begin{equation}\label{eq:jUnifomBound}
    \bm j^\varepsilon \in C([0,\infty)\, ; L^1 \cap L^{\ell}(\mathbb R^3;\mathbb R^3)) \, .
\end{equation}
We can now prove existence of a solution for the LLG equation.
\begin{lem}\label{lem:LLGcompactness}
Let $(\bm m^\varepsilon, \bm j^\varepsilon)$ be the sequence solving the regularized LLG equations \eqref{eq:LLG1} satisfying uniform bounds \eqref{eq:mUniformbound} and \eqref{eq:jUnifomBound}. Then, up to a subsequence, there exist limits
\begin{gather}\label{eq:jregularity}
    \begin{gathered}
    \bm m \in L^\infty((0,\infty);H^2(\mathbb R^3 ; \mathbb S^2)) \cap \dot H^1((0,\infty);L^2(\mathbb R^3;\mathbb R^3)) \, , \\
    \bm j \in L^\infty((0,\infty);L^\ell (\mathbb R^3;\mathbb R^3)) \, ,
    \end{gathered}
\end{gather}
solving \eqref{eq:LLG} in the sense of distributions. In addition, for emergent electromagnetic fields given by \eqref{eq:regularizedemergent}, we have that
\begin{alignat*}{3}
    e^\varepsilon_i &\xrightarrow[\varepsilon]{} e_i = \bm m \cdot (\partial_i \bm m \times \partial_t \bm m) \quad &&\text{in} \quad \mathscr D'((0,\infty) \times \mathbb R^3) \, , \\
    b^\varepsilon_i &\xrightarrow[\varepsilon]{} b_i = \epsilon^{ijk} \, \bm m \cdot (\partial_j \bm m \times \partial_k \bm m) \quad &&\text{in} \quad \mathscr D'((0,\infty) \times \mathbb R^3) \, .
\end{alignat*}
\end{lem}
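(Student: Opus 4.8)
The strategy is to extract weak limits from the uniform bounds, upgrade $\bm m^\varepsilon$ to strong local convergence via the Aubin--Lions--Simon lemma, and then pass to the limit term by term in \eqref{eq:LLG1}; the subcriticality $\ell>6/5$ will enter exactly once, to handle the transport term. Since $L^\infty((0,\infty);H^2)$ and, because $\ell>1$, $L^\infty((0,\infty);L^\ell)$ are duals of separable Banach spaces, Banach--Alaoglu together with \eqref{eq:mUniformbound} and \eqref{eq:jUnifomBound} provides a subsequence with $\bm m^\varepsilon\rightharpoonup\bm m$ weakly-$\ast$ in $L^\infty((0,\infty);H^2)$, $\partial_t\bm m^\varepsilon\rightharpoonup\partial_t\bm m$ in $L^2((0,\infty)\times\mathbb R^3)$, and $\bm j^\varepsilon\rightharpoonup\bm j$ weakly-$\ast$ in $L^\infty((0,\infty);L^\ell)$; in particular $\bm m-\bm{\hat e}_3\in L^\infty((0,\infty);H^2)$ and $\bm j\in L^\infty((0,\infty);L^\ell)$. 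Because $K_\varepsilon$ is a symmetric mollifier, $\langle K_\varepsilon\bm j^\varepsilon,\bm\varphi\rangle=\langle\bm j^\varepsilon,K_\varepsilon\bm\varphi\rangle\to\langle\bm j,\bm\varphi\rangle$ for every $\bm\varphi\in\mathscr D((0,\infty)\times\mathbb R^3)$, so $K_\varepsilon\bm j^\varepsilon\to\bm j$ in $\mathscr D'$ as well.

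Next I would establish strong compactness of $\bm m^\varepsilon$. Fix $T,R>0$: on $(0,T)\times B_R$ the functions $\bm m^\varepsilon-\bm{\hat e}_3$ are bounded in $L^\infty(0,T;H^2(B_R))$ while $\partial_t\bm m^\varepsilon$ is bounded in $L^2(0,T;L^2(B_R))$, and by Rellich $H^2(B_R)\hookrightarrow\hookrightarrow H^s(B_R)\hookrightarrow L^2(B_R)$ for any $s<2$. The Aubin--Lions--Simon lemma then yields, along a further subsequence, $\bm m^\varepsilon\to\bm m$ in $C([0,T];H^s(B_R))$ for all $s<2$; a diagonal argument over $T,R\to\infty$ keeps this on every compact set. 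Taking $3/2<s<2$ and using $H^s(B_R)\hookrightarrow C(\overline{B_R})$ gives $\bm m^\varepsilon\to\bm m$ uniformly on compact sets, hence $|\bm m|=1$ a.e. and the continuity of $\bm m$ claimed for the limit; letting $s\uparrow2$ and using $H^{s-1}(B_R)\hookrightarrow L^q(B_R)$ for $q<6$ gives $\nabla\bm m^\varepsilon\to\nabla\bm m$ in $C([0,T];L^q(B_R))$ for every $q<6$, so in particular $\nabla\bm m^\varepsilon\to\nabla\bm m$ strongly in $L^2_{loc}$ while $\Delta\bm m^\varepsilon\rightharpoonup\Delta\bm m$ weakly in $L^2_{loc}$.

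To pass to the limit in \eqref{eq:LLG1} I would test against $\bm\varphi\in\mathscr D((0,\infty)\times\mathbb R^3)$. The terms $\partial_t\bm m^\varepsilon$ and $h\,\bm m^\varepsilon\times\bm{\hat e}_3$ converge directly. In $\langle\bm m^\varepsilon\times\alpha\partial_t\bm m^\varepsilon,\bm\varphi\rangle$ and $\langle\bm m^\varepsilon\times\Delta\bm m^\varepsilon,\bm\varphi\rangle$ one pairs the strongly convergent factor $\bm m^\varepsilon\bm\varphi$ (uniformly on $\mathrm{supp}\,\bm\varphi$, in particular in $L^2$) with the weakly $L^2$-convergent one ($\partial_t\bm m^\varepsilon$, resp. $\Delta\bm m^\varepsilon$). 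For the fourth-order term I would use the divergence identity \eqref{eq:divergencestructure} to move the two derivatives onto $\bm\varphi$, so that $\langle\bm m^\varepsilon\times\Delta\bm m^\varepsilon,\Delta\bm\varphi\rangle$ and $\langle\partial_k\bm m^\varepsilon\times\Delta\bm m^\varepsilon,\partial_k\bm\varphi\rangle$ are again of strong-times-weak type in $L^2$, using the strong $L^2_{loc}$ convergence of $\nabla\bm m^\varepsilon$. Finally, writing $\langle(K_\varepsilon\bm j^\varepsilon\cdot\nabla)\bm m^\varepsilon,\bm\varphi\rangle=\sum_i\langle K_\varepsilon j_i^\varepsilon,\ \partial_i\bm m^\varepsilon\cdot\bm\varphi\rangle$, the factor $\partial_i\bm m^\varepsilon\cdot\bm\varphi$ converges \emph{strongly} in $L^{\ell'}$ because $\ell'<6$ and Step~2 applies on $\mathrm{supp}\,\bm\varphi$, while $K_\varepsilon j_i^\varepsilon\rightharpoonup j_i$ weakly in $L^\ell$; hence the pairing converges to $\langle j_i,\partial_i\bm m\cdot\bm\varphi\rangle$, and $(\bm m,\bm j)$ solves \eqref{eq:LLG} in $\mathscr D'$.

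For the emergent fields from \eqref{eq:regularizedemergent}: in $b_i^\varepsilon=\epsilon^{ijk}\,\bm m^\varepsilon\cdot(\partial_j\bm m^\varepsilon\times\partial_k\bm m^\varepsilon)$ the two gradient factors converge strongly in $L^q_{loc}$ for $q<6$, so $\partial_j\bm m^\varepsilon\times\partial_k\bm m^\varepsilon$ converges strongly in $L^p_{loc}$ for $p<3$, and multiplication by the uniformly convergent $\bm m^\varepsilon$ gives $b_i^\varepsilon\to b_i$ in $L^1_{loc}$, hence in $\mathscr D'$; for $e_i^\varepsilon$ I would use the triple-product identity $\bm m\cdot(\partial_i\bm m\times\partial_t\bm m)=\partial_t\bm m\cdot(\bm m\times\partial_i\bm m)$, note that $\bm m^\varepsilon\times\partial_i\bm m^\varepsilon\to\bm m\times\partial_i\bm m$ strongly in $L^2_{loc}$ and $\partial_t\bm m^\varepsilon\rightharpoonup\partial_t\bm m$ weakly in $L^2$, and conclude $e_i^\varepsilon\to e_i$ in $\mathscr D'$. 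The one genuinely delicate point is the transport term: since $\bm j^\varepsilon$ converges only weakly, identifying the limit $(\bm j\cdot\nabla)\bm m$ requires $\nabla\bm m^\varepsilon$ to be \emph{compact} in $L^{\ell'}_{loc}$, which is available precisely because $\ell>6/5$ forces $\ell'<6$ and the $H^2$-plus-$\partial_t L^2$ bounds yield Aubin--Lions compactness of $\nabla\bm m^\varepsilon$ in every $L^q_{loc}$ with $q<6$; at the critical exponent $\ell=6/5$ one has only boundedness, not compactness, of $\nabla\bm m^\varepsilon$ in $L^6_{loc}$, and this step breaks down.
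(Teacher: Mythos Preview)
Your proposal is correct and follows essentially the same route as the paper: weak-$\ast$ extraction from the uniform bounds, Aubin--Lions to upgrade $\bm m^\varepsilon$ to strong $C_tH^s_{loc}$ convergence for $s<2$ (hence $\nabla\bm m^\varepsilon\to\nabla\bm m$ in $L^q_{loc}$ for $q<6$), passage to the limit term by term via strong--weak pairings, the divergence identity \eqref{eq:divergencestructure} for the fourth-order term, and the key observation $\ell'<6$ to handle $(K_\varepsilon\bm j^\varepsilon\cdot\nabla)\bm m^\varepsilon$. Your treatment of $K_\varepsilon$ and the explicit triple-product rewrite for $e_i^\varepsilon$ are slightly more detailed than the paper's, but the argument is the same.
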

\begin{proof}[\textbf{Proof of Lemma \ref{lem:LLGcompactness}}]
 By \eqref{eq:mUniformbound} the functions $\bm m^\varepsilon-\bm{\hat {e}}_3$ are weakly* compact in the energy space
 \begin{equation*}
     \mathcal E= L^\infty((0,\infty)\, ; H^2(\mathbb R^3;\mathbb R^3)) \cap \dot H^1((0,\infty)\,;L^2(\mathbb R^3;\mathbb R^3))  \, .
 \end{equation*}
 Up to a subsequence, by Aubin-Lions lemma we may assume that, for some $\bm m$ such that $\bm m - \bm{\hat {e}}_3 \in \mathcal E$
 \begin{alignat}{2}
     \bm m^\varepsilon - \bm{\hat {e}}_3 &\xrightharpoonup[\varepsilon]{} \bm m - \bm{\hat {e}}_3 \quad &&\text{weakly* in } \mathcal E\, , \label{eq:weakcvg} \\
     \bm m ^\varepsilon&\xrightarrow[\varepsilon]{} \bm m \quad &&\text{in} \quad C ([0,T]\, ; H^s_{loc}( \mathbb R^3\,;\mathbb R^3)) \quad \text{for all} \quad s<2\, . \label{eq:strongcvg}
 \end{alignat}
From $|\bm m^\varepsilon|=1$ and \eqref{eq:strongcvg} we have $|\bm m |=1$ almost everywhere in $(0,\infty)\times \mathbb R^3$. Hence
 \begin{align*}
\bm m \in L^\infty((0,\infty)\, ;H^2(\mathbb R^3;\mathbb S^2))\, \cap\, \dot H^1((0,\infty)\,;L^2(\mathbb R^3;\mathbb R^3))\,.
\end{align*}
From \eqref{eq:strongcvg} and Sobolev embedding we have
 \begin{alignat}{3}\label{eq:Lqkonvergencija}
 \begin{aligned}
     \bm m^\varepsilon &\xrightarrow[\varepsilon]{} \bm m \quad &&\text{in} \quad L^p((0,T)\, ;L^p_{loc}(\mathbb R^3\,;\mathbb R^3))  \, ,\\
     \nabla \bm m^\varepsilon &\xrightarrow[\varepsilon]{} \nabla \bm m \quad &&\text{in} \quad L^p((0,T)\, ;L^q_{loc}(\mathbb R^3\,;\mathbb R^3))  \, ,
     \end{aligned}
 \end{alignat}
where $1\leq p \leq \infty$ and $1\leq q < 6$.
From \eqref{eq:weakcvg} we deduce that in particular
 \begin{align*}
     \partial_t \bm m^\varepsilon \xrightharpoonup[\varepsilon]{} \partial_t \bm m\, \quad \text{and} \quad \Delta \bm m^\varepsilon \xrightharpoonup[\varepsilon]{} \Delta \bm m \quad \text{weakly in} \quad L^2((0,\infty)\times \mathbb R^3 ;\mathbb R^3).
 \end{align*}
Therefore using \eqref{eq:Lqkonvergencija} we get that
\begin{alignat*}{2}
    \bm m^\varepsilon \times \partial_t \bm m^\varepsilon &\xrightarrow[\varepsilon]{} \bm m \times \partial_t \bm m \quad &&\text{in} \quad \mathscr D'((0,\infty)\times \mathbb R^3) \, ,\\
    \bm m^\varepsilon \times \Delta \bm m^\varepsilon &\xrightarrow[\varepsilon]{} \bm m \times \Delta \bm m \quad &&\text{in} \quad \mathscr D'((0,\infty)\times \mathbb R^3) \, ,\\
    \Delta (\bm m^\varepsilon \times \Delta \bm m^\varepsilon ) &\xrightarrow[\varepsilon]{} \Delta (\bm m \times \Delta \bm m ) \quad &&\text{in} \quad \mathscr D'((0,\infty)\times \mathbb R^3) \, ,\\
     \partial_k \left(\partial_k \bm m^\varepsilon \times  \Delta \bm m^\varepsilon \right) &\xrightarrow[\varepsilon]{} \partial_k \left(\partial_k \bm m \times \Delta \bm m \right) \quad &&\text{in} \quad \mathscr D'((0,\infty)\times \mathbb R^3) \, .
\end{alignat*}
From \eqref{eq:jUnifomBound} there exists $\bm j \in L^\infty((0,\infty);L^{\ell} (\mathbb R^3;\mathbb R^3))$ such that, up to a subsequence
\begin{align*}
    \bm j^\varepsilon \xrightharpoonup[\varepsilon]{} \bm j \quad \text{weakly* in} \quad L^\infty((0,\infty);L^{\ell} (\mathbb R^3;\mathbb R^3)) \, . 
\end{align*}
Since $\ell^* < (6/5)^*=6\,$, using \eqref{eq:Lqkonvergencija} we get that
\begin{equation*}
    (K_\varepsilon\,\bm j^\varepsilon \cdot \nabla)\bm m^\varepsilon \xrightarrow[\varepsilon]{}  (\bm j \cdot \nabla) \bm m \quad \text{in} \quad \mathscr D'((0,\infty)\times \mathbb R^3)\, .
\end{equation*}
In view of \eqref{eq:divergencestructure} the above convergences prove that $\bm m$ is a weak solution to the LLG equation \eqref{eq:LLG}. Moreover from \eqref{eq:Lqkonvergencija} we have that $\bm e^\varepsilon,\, \bm b^\varepsilon$ weakly converge to $\bm e,\, \bm b,$ respectively, i.e.
\begin{alignat*}{3}
\bm m^\varepsilon \cdot (\nabla \bm m^\varepsilon \times \partial_t \bm m^\varepsilon ) &\xrightarrow[\varepsilon]{} \bm m \cdot (\nabla \bm m \times \partial_t \bm m) \quad &&\text{in} \quad \mathscr D'((0,\infty) \times \mathbb R^3) \, , \\
\epsilon^{ijk}\, \bm m^\varepsilon \cdot (\partial_j \bm m^\varepsilon \times \partial_k \bm m^\varepsilon) &\xrightarrow[\varepsilon]{} \epsilon^{ijk} \, \bm m \cdot (\partial_j \bm m \times \partial_k \bm m) \quad &&\text{in} \quad \mathscr D'((0,\infty) \times \mathbb R^3) \, . \quad \mbox{\qedhere}
\end{alignat*}
\end{proof}

It remains to show compactness for terms in the Vlasov equation \eqref{eq:Vlasov0}. The key ingredient is the following velocity averaging lemma. 
\begin{lem}\label{lem:Stability}
Let $(f^\varepsilon,\bm E^\varepsilon, \bm B^\varepsilon, \bm e^\varepsilon, \bm b^\varepsilon)$ be the sequence solving the regularized Vlasov equation \eqref{eq:Vlasov0} satisfying uniform bounds \eqref{eq:fUniformbound}-\eqref{eq:ebUniformbound} and $\psi \in \mathscr{D} (\mathbb R^3) $. Then, up to a subsequence, we have 
\begin{equation}\label{eq:L1compactness}
     \int_{\mathbb R^3} \, f^\varepsilon \psi \, d\bm v \xrightarrow[\varepsilon]{} \int_{\mathbb R^3}  \, f \psi \, d\bm v \quad \text{in} \quad L^1_{loc}((0,\infty)\times \mathbb R^3) \, .
\end{equation}
\end{lem}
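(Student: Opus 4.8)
The plan is to adapt the renormalization and velocity averaging scheme of DiPerna--Lions to the regularized equation \eqref{eq:Vlasov0}; the only genuinely new point is that the emergent electric field enters with the weak integrability $\bm e^\varepsilon\in L^2_tL^{3/2}_x$ from \eqref{eq:ebUniformbound}. Fix $T>0$. Using the uniform bounds \eqref{eq:fUniformbound}--\eqref{eq:ebUniformbound} I would first extract weakly-$\ast$ convergent subsequences: $f^\varepsilon\rightharpoonup f$ in $L^\infty((0,T);L^r(\mathbb R^6))$ with $f\ge0$, $\bm E^\varepsilon\rightharpoonup\bm E$, $\bm B^\varepsilon\rightharpoonup\bm B$ in $L^\infty_tL^2_x$, $\bm b^\varepsilon\rightharpoonup\bm b$ in $L^\infty_tL^3_x$ and $\bm e^\varepsilon\rightharpoonup\bm e$ in $L^2_tL^{3/2}_x$ (the limits $\bm e,\bm b$ being the emergent fields of the limit magnetization, by Lemma \ref{lem:LLGcompactness}). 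It then suffices to produce, for a fixed $\psi$, a subsequence along which $\int_{\mathbb R^3}f^\varepsilon\psi\,d\bm v$ converges in $L^1_{loc}((0,T)\times\mathbb R^3)$, since the strong limit must agree with the weak limit $\int f\psi\,d\bm v$; a Cantor diagonal extraction over a countable dense family of test functions and over $T\in\mathbb N$ then yields a single subsequence.

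For $\delta>0$ set $\beta_\delta(s)=s/(1+\delta s)$, which on $[0,\infty)$ is smooth, increasing, satisfies $0\le\beta_\delta(s)\le\min\{s,\delta^{-1}\}$, $\beta_\delta(s)^2\le\delta^{-1}s$, $0\le s-\beta_\delta(s)\le\delta s^2$, and $\beta_\delta(s)\uparrow s$ as $\delta\downarrow0$. Since the regularized solutions are classical, $\beta_\delta(f^\varepsilon)$ solves the renormalized equation, and since $\nabla_v\cdot(K_\varepsilon\bm F^\varepsilon)=0$ (the fields $\bm E^\varepsilon,\bm e^\varepsilon$ do not depend on $\bm v$ and $\nabla_v\cdot(\bm v\times\bm c)=0$ for $\bm c=\bm c(t,\bm x)$) it takes the flux form
\[
\partial_t\beta_\delta(f^\varepsilon)+\bm v\cdot\nabla_x\beta_\delta(f^\varepsilon)=-\nabla_v\cdot\big((K_\varepsilon\bm F^\varepsilon)\,\beta_\delta(f^\varepsilon)\big),
\]
that is \eqref{eq:transport} with $m=1$, $g_1=0$ and $g_2=-(K_\varepsilon\bm F^\varepsilon)\beta_\delta(f^\varepsilon)$. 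From $\beta_\delta(f^\varepsilon)^2\le\delta^{-1}f^\varepsilon$ and \eqref{eq:fUniformbound}, $\beta_\delta(f^\varepsilon)$ is bounded in $L^\infty((0,T);L^2(\mathbb R^6))\subset L^2((0,T)\times\mathbb R^6)$ uniformly in $\varepsilon$, which supplies the $L^2$ hypothesis of Theorem \ref{thm:dipernalions}.

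The core of the proof is to bound $g_2$, uniformly in $\varepsilon$, in $L^1\big(B_R^v;L^p((0,T)\times\mathbb R^3_x)\big)$ for every $R>0$ and some fixed $p\in(4/3,3/2)$. Since $\|K_\varepsilon g\|_{L^s}\le\|g\|_{L^s}$ one may work with the unmollified fields; writing $\bm F^\varepsilon=-(\bm E^\varepsilon+\bm e^\varepsilon)-\bm v\times(\bm B^\varepsilon+\bm b^\varepsilon)$ and using $|\bm v|\le R$ on $B_R^v$, every term is (field)$\times\beta_\delta(f^\varepsilon)$ with the field in $L^\infty_tL^2_x$, $L^\infty_tL^3_x$ or $L^2_tL^{3/2}_x$. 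For the borderline $\bm e^\varepsilon$-term I would apply H\"older in $\bm x$ with $1/p=2/3+1/a$, then in $t$ with $1/p=1/2+1/b$, and finally take $L^1_v$, arriving at a bound of the shape $\|\bm e^\varepsilon\|_{L^2_tL^{3/2}_x}\,\|\beta_\delta(f^\varepsilon)\|_{L^1(B_R^v;L^b_tL^a_x)}$; the last factor is finite because the boundedness $\beta_\delta(f^\varepsilon)\le\delta^{-1}$ gives $\|\beta_\delta(f^\varepsilon)(t,\cdot,\bm v)\|_{L^a_x}^a\le\delta^{-(a-1)}\int_{\mathbb R^3}f^\varepsilon(t,\bm x,\bm v)\,d\bm x$ for \emph{every} $a$, so that choosing $a$ large enough to force $p>4/3$ still leaves the right-hand side controlled by a low power of $\int_0^T\!\!\int_{\mathbb R^6}f^\varepsilon$, which is bounded by \eqref{eq:fUniformbound}. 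This is exactly where renormalization is indispensable: the bare $f^\varepsilon$ lies only in $L^1\cap L^r$ in $(\bm x,\bm v)$ and cannot be raised to such a high power in $\bm x$ while keeping the $\bm v$-exponent equal to $1$. The $\bm E^\varepsilon,\bm B^\varepsilon,\bm b^\varepsilon$-terms are estimated in the same way and are strictly easier.

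With these bounds in hand I would insert a time cut-off $\chi\in C_c^\infty((0,T))$ (which adds only a harmless lower-order term $g_1=\chi'\,\beta_\delta(f^\varepsilon)$, bounded in $L^1_vL^p_{t,x}$ by the same argument) and apply Theorem \ref{thm:dipernalions} with $N=3$, $q=1$: for every $\psi\in\mathscr D(\mathbb R^3)$ the average $\int\beta_\delta(f^\varepsilon)\psi\,d\bm v$ is bounded uniformly in $\varepsilon$ in $H^s_{loc}((0,T)\times\mathbb R^3)$ with $s=(1-\theta)/5>0$, the strict positivity coming from $\theta=(N+1)(2-p)/(2p)<1$, which holds precisely because $p>4/3$. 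Rellich--Kondrachov then gives, along a further subsequence, strong convergence of $\int\beta_\delta(f^\varepsilon)\psi\,d\bm v$ in $L^1_{loc}$. Finally I would remove the renormalization: using $0\le s-\beta_\delta(s)\le\delta s^2$ and a splitting at level $M$,
\[
\|f^\varepsilon-\beta_\delta(f^\varepsilon)\|_{L^1((0,T)\times\mathbb R^6)}\le\delta M\,T\,\|f^\varepsilon\|_{L^\infty_tL^1}+M^{1-r}\,T\,\|f^\varepsilon\|_{L^\infty_tL^r}^r ,
\]
which tends to $0$ uniformly in $\varepsilon$ on letting first $M\to\infty$ and then $\delta\to0$; together with the convergence for each fixed $\delta$ this shows that $\int f^\varepsilon\psi\,d\bm v$ is Cauchy in $L^1_{loc}$, establishing \eqref{eq:L1compactness}. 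I expect the main obstacle to be the core estimate of the third paragraph: squeezing a uniform $L^1_vL^p_{t,x}$ bound with $p>4/3$ on the emergent-field flux $\bm e^\varepsilon\beta_\delta(f^\varepsilon)$ out of only $\bm e^\varepsilon\in L^2_tL^{3/2}_x$, which forces one to balance the renormalization parameter against the admissible exponent range in Theorem \ref{thm:dipernalions}.
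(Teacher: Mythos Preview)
Your proof is correct and follows the same DiPerna--Lions strategy (renormalization via $\beta_\delta$ plus velocity averaging through Theorem~\ref{thm:dipernalions}) as the paper. The one place where you work harder than necessary is the flux estimate for $\bm e^\varepsilon\,\beta_\delta(f^\varepsilon)$, which you flag as the main obstacle: since $\beta_\delta(f^\varepsilon)\le 1/\delta$ pointwise and $\bm e^\varepsilon\in L^2((0,T);L^{3/2}(\mathbb R^3))\subset L^{3/2}((0,T)\times\mathbb R^3)$ on bounded time intervals, the product is immediately bounded in $L^\infty\big(\mathbb R^3_v;L^{3/2}((0,T)\times\mathbb R^3_x)\big)\subset L^2_{loc}\big(\mathbb R^3_v;L^{3/2}((0,T)\times\mathbb R^3_x)\big)$, and the paper simply takes $p=3/2$, $q=2$ in Theorem~\ref{thm:dipernalions} (yielding $s=1/12$) with no further interpolation. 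Your H\"older gymnastics with $q=1$ and $p<3/2$ also work, but the ``balancing'' you anticipate is not actually required.
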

\begin{proof}[\textbf{Proof of Lemma \ref{lem:Stability}}]
We will prove the Lemma in two steps like in \cite{DiPernaLions1989}. In the first step, we additionally assume that $f^\varepsilon$ is uniformly bounded in $L^\infty((0,\infty)\times \mathbb R^3 \times \mathbb R^3)$. In the second step we prove for general $f^\varepsilon$.

\textbf{\textit{Step 1}} We first remark that we can rewrite \eqref{eq:Vlasov0} as
\begin{equation*}
    \partial_t f^\varepsilon + \bm v \cdot \nabla_x f^\varepsilon = \nabla_v \cdot \bm g_2^\varepsilon \, ,
\end{equation*}
where
\begin{equation*}
    \bm g_2^\varepsilon=  K_\varepsilon \, (\bm E^\varepsilon + \bm e^\varepsilon + \bm v \times (\bm B^\varepsilon + \bm b^\varepsilon )) \, f^\varepsilon \, .
\end{equation*}
Then if $f^\varepsilon$ is uniformly bounded in $L^\infty((0,T)\times \mathbb R^3 \times \mathbb R^3)$ (for all $T<\infty$), from \eqref{eq:energyestimate2} we deduce that $\bm g_2^\varepsilon$ is uniformly bounded in $L^{2}_{loc}(\mathbb R^3_v\,;L^{3/2}((0,T)\times \mathbb R^3_x))$ (for all $T<\infty$).

Next, for all $\delta>0$ and $T>0$, we choose $\zeta \in \mathscr D(\mathbb R)$ such that $\zeta \equiv 1 $ on $[\delta, T]$, $\mathrm{supp} \, \zeta \subset [\frac{1}{2} \, \delta, 2T]$ and we observe that $\tilde f^\varepsilon \defeq \zeta \, f^\varepsilon$ solves
\begin{align*}
    \partial_t \tilde f^\varepsilon + \bm v \cdot \nabla_x \tilde f^\varepsilon = {g}_1^\varepsilon + \nabla_v \cdot  \tilde{ \bm g_2}^\varepsilon
\end{align*}
where both ${g}_1^\varepsilon = \zeta ' f^\varepsilon$ and $\tilde{ \bm g_2}^\varepsilon=\zeta \, \bm g_2^\varepsilon$ are uniformly bounded in ${L^2_{loc}(\mathbb R^3_v \, ; L^{3/2}(\mathbb R \times \mathbb R^3_x))}$. Remark also that $\tilde f^\varepsilon$ is uniformly bounded in ${L^2(\mathbb R \times \mathbb R^3 \times \mathbb R^3)}$ and that
\begin{equation*}
3/2 > 2(N+1)/(N+3)=4/3\, .
\end{equation*}
Therefore if $\psi \in \mathscr D (\mathbb R^3)$, we deduce from Theorem \ref{thm:dipernalions} that
\begin{equation*}
\int_{\mathbb R^3} \tilde f^\varepsilon \, \psi \, d\bm v \quad \text{is bounded in} \quad H^{1/12}(\mathbb R \times \mathbb R^3) \, .
\end{equation*}
In particular, we get
\begin{equation*}
    \int_{\mathbb R^3}\zeta \, f^\varepsilon \psi \, d\bm v \xrightarrow[\varepsilon]{} \int_{\mathbb R^3} \zeta \, f \psi \, d\bm v \quad \text{in} \quad L^1_{loc}((0,T)\times \mathbb R^3) \, .
\end{equation*}
Since $\zeta \equiv 1$ on $[\delta,T]$, we conclude that 
\begin{equation*}
     \int_{\mathbb R^3} \, f^\varepsilon \psi \, d\bm v \xrightarrow[\varepsilon]{} \int_{\mathbb R^3}  \, f \psi \, d\bm v \quad \text{in} \quad L^1_{loc}((0,\infty)\times \mathbb R^3) \, ,
\end{equation*}
provided we show that 
\begin{equation*}
    \sup_{\varepsilon} \int_0^\delta \int \int_{B_R\times B_R} |f^\varepsilon| \, d\bm x \, d\bm v \, dt \rightarrow 0 \quad \text{as} \quad \delta \rightarrow 0_+ \, .
\end{equation*}
This claim is an immediate consequence of the $L^r((0,T)\times \mathbb R^3 \times \mathbb R^3)$ bound on $f^\varepsilon$.

\textbf{\textit{Step 2}} We then define a $C^1([0,\infty))$ function
\begin{align*}
    \beta_\delta(t) = \frac{t}{1+\delta t} \quad \text{for} \quad 0<\delta < 1,\; t\geq 0 \, ,
\end{align*}
and therefore $\beta_\delta (f^\varepsilon)$ solves
\begin{align*}
    \partial_t ( \beta_\delta(f^\varepsilon)) + \bm v \cdot \nabla_x\, \beta_\delta(f^\varepsilon) = \nabla_v \cdot \bm g_2^\varepsilon \, ,
\end{align*}
where 
\begin{equation*}
    \bm g_2^\varepsilon \defeq  K_\varepsilon \, ( \bm E^\varepsilon +  \bm e^\varepsilon + \bm v \times ( \bm B^\varepsilon +  \bm b^\varepsilon )) \,\beta_\delta( f^\varepsilon) \, .
\end{equation*}
Since $\beta_\delta(t)\leq t$ and $\beta_\delta(t)\leq 1/\delta$ we know that $\beta_\delta(f^\varepsilon)$ is bounded in $L^1 \cap L^\infty(\mathbb R^3\times \mathbb R^3)$. Then from \eqref{eq:energyestimate2} we deduce that $\bm g_2^\varepsilon$ is uniformly bounded in $L^{2}_{loc}(\mathbb R^3_v \,;L^{3/2}((0,T)\times \mathbb R^3_x))$ (for all $T<\infty$). We can therefore apply the proof from Step 1 to obtain
\begin{align*}
    \int_{\mathbb R^3} \beta_\delta (f^\varepsilon) \, \psi \, d\bm v \xrightarrow[\varepsilon]{} \int_{\mathbb R^3} f_\delta \, \psi \, d\bm v \quad \text{in} \quad L^1_{loc}((0,\infty)\times \mathbb R^3)\, ,
\end{align*}
where $f_\delta$ is the weak limit of $\beta_\delta(f^\varepsilon)$ and $\psi\in \mathscr{D}(\mathbb R^3)$ is arbitrary. In order to conclude that 
\begin{equation*}
     \int_{\mathbb R^3} \, f^\varepsilon \psi \, d\bm v \xrightarrow[\varepsilon]{} \int_{\mathbb R^3}  \, f \psi \, d\bm v \quad \text{in} \quad L^1_{loc}((0,\infty)\times \mathbb R^3)  \, ,
\end{equation*}
we have to prove that
\begin{align*}
    \sup_{\varepsilon} \int_0^T \int_{B_R \times B_R} |f^\varepsilon - \beta_\delta(f^\varepsilon)| \, d\bm x \, d\bm v \, dt \rightarrow 0 \quad \text{as} \quad \delta \rightarrow 0_+ \quad \text{for all}\quad R,T<\infty \, . 
\end{align*}
Indeed, we have
\begin{align*}
    0\leq f^\varepsilon - \beta_\delta(f^\varepsilon) = \frac{\delta \, (f^\varepsilon) ^2}{1+\delta f^\varepsilon} \leq M\delta f^\varepsilon + f^\varepsilon \mathds{1}_{\{ f^\varepsilon > M\}} \, ,
\end{align*}
where $f^\varepsilon \mathds{1}_{\{ f^\varepsilon > M\}}$ can be made arbitrarily small in $L^1((0,T)\times B_R \times B_R)$ uniformly in $\varepsilon$ by taking $M$ large.
\end{proof}
We are now ready to finish the proof of Theorem \ref{thm:glavni}.
\begin{proof}[\textbf{Proof of Theorem \ref{thm:glavni}}] Let $(\bm m^\varepsilon, f^\varepsilon,\bm E^\varepsilon, \bm B^\varepsilon)$ be the sequence solving the regularized system \eqref{eq:LLG1}-\eqref{eq:Maxwell}. From Lemma \ref{lem:LLGcompactness} there exist $(\bm m,\bm j)$ solving \eqref{eq:LLG} in the sense of distributions. In exactly the same way as in \cite{DiPernaLions1989}, from Lemma \ref{lem:Stability} we get that
\begin{align*}
\bm j^\varepsilon \xrightarrow[\varepsilon]{} - \int_{\mathbb R^3} \bm v \, f \, d\bm v  \quad \text{in} \quad L^1_{loc}((0,\infty)\times \mathbb R^3)  \, ,
\end{align*}
and therefore $\bm j=- \int_{\mathbb R^3}\bm v \, f \, d\bm v \, .$ Moreover, since $\int_{\mathbb R^3} f^\varepsilon \, \psi \, d\bm v$ is uniformly bounded in $L^r_{loc}((0,\infty)\times \mathbb R^3)$ we deduce from Lemma \ref{lem:Stability} that 
\begin{equation*}
    \int_{\mathbb R^3}f^\varepsilon \psi \, d\bm v \rightarrow \int_{\mathbb R^3} f \psi \, d\bm v \quad \text{in} \quad L^p_{loc}((0,\infty)\times \mathbb R^3) \quad \text{for all} \quad 1\leq p <r.
\end{equation*}
In view of \eqref{eq:Vlasovtransport}, we obtain weak convergence of all terms in the Vlasov equation \eqref{eq:Vlasov0} taking into account uniform bounds \eqref{eq:EBUniformbound}-\eqref{eq:ebUniformbound} and $r>3$.
Moreover, from weak convergence of $\bm E^\varepsilon,\,\bm B^\varepsilon$ we get 
\begin{equation*}
    \varepsilon_r \, \partial_t \bm E - \frac{1}{\mu_r} \, \nabla \times \bm B = -\bm j\,,\quad \partial_t \bm B + \nabla \times \bm E = 0 \quad \text{in} \quad \mathscr D'((0,\infty)\times \mathbb R^3)\, .
\end{equation*}
From Sobolev embedding we have that $\bm m \in C^{1/2}([0,\infty);L^2(\mathbb R^3;\mathbb S^2))$ and thus the initial data is attained by the limit, i.e. $\bm m|_{t=0}=\bm m_0$. Then by interpolation we have
 \begin{align*}
    \| \bm m (t_1) - \bm m (t_2)\|_{H^s} &\leq  \| \bm m (t_1) - \bm m (t_2)\|_{H^2}^{s/2}  \| \bm m (t_1) - \bm m (t_2)\|_{L^2}^{1-s/2} \\
     &\leq \| \bm m \|^{s/2}_{L^\infty_t H^2_x} \, |t_1 - t_2|^{1/2 - s/4}\, .
 \end{align*}
 In particular we have $\bm m \in C([0,\infty);H^s(\mathbb R^3;\mathbb S^2))$ for $s<2$.
The VM system \eqref{eq:Vlasov0}-\eqref{eq:Maxwell} combined with the uniform bounds \eqref{eq:fUniformbound}-\eqref{eq:ebUniformbound} gives that 
\begin{gather*}
    \partial_t f^\varepsilon \quad \text{is bounded in} \quad L^2((0,\infty); W^{-1,\,3r/(2r+3)}_{loc}(\mathbb R^3 \times \mathbb R^3)) \, , \\
    \partial_t \bm E^\varepsilon , \; \partial_t \bm B^\varepsilon \quad \text{are bounded in} \quad L^\infty((0,\infty); H^{-2}(\mathbb R^3))\, .
\end{gather*}
These bounds imply that $\bm E^\varepsilon, \bm B^\varepsilon$ are compact in $C([0,T];H^{-s}_{loc}(\mathbb R^3))$ and that $f^\varepsilon$ is compact in $C([0,T];W_{loc}^{-s,\,3r/(2r+3)} (\mathbb R^3 \times \mathbb R^3))$ (for all $T<\infty$, all $s>0$). This, in particular, shows that the initial data are attained by the limit, i.e. $f|_{t=0} = f_0,\ \bm E|_{t=0}=\bm E_0, \ \bm B|_{t=0}=\bm B_0$.

The statement $f\geq 0$ follows from $f_0\geq 0$ and Mazur's lemma. Lastly, the proof to obtain
\begin{equation*}
    \nabla \cdot \bm E = \frac{\rho}{\varepsilon_r} \,, \quad \nabla \cdot \bm B= 0  \quad \text{in}\quad \mathscr D'((0,\infty)\times \mathbb R^3) \, ,
\end{equation*}
and that the mass $\int \int_{\mathbb R^3 \times \mathbb R^3} f \, d\bm x \, d \bm v $ is independent of $t\geq 0$ is analogous to \cite{DiPernaLions1989}.
\end{proof}

Finally we show that helicity functional behaves continuously along the flow of $\bm m$.

\begin{lem}\label{lem:hopfion} The Hopf invariant $H=H(\bm m)$ is a smooth functional over the class of fields
$\bm m: \mathbb{R}^3 \to \mathbb{S}^2$ such that $\bm m  \in H^s(\mathbb{R}^3; \mathbb{S}^2)$ where $s>3/2$.
\end{lem}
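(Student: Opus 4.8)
The plan is to recognize $H$ as a constant multiple of the helicity integral and to show that the latter is a continuous polynomial on a Banach space, which is automatically $C^\infty$. Writing $\bm m = \bm{\hat e}_3 + \bm v$ with $\bm v \in H^s(\mathbb R^3;\mathbb R^3)$ (and only afterwards imposing $|\bm m|=1$), I would set
\[
  b_i[\bm v] = \tfrac12\,\epsilon_{ijk}\,(\bm{\hat e}_3 + \bm v)\cdot(\partial_j \bm v \times \partial_k \bm v),
  \qquad
  \mathbb H(\bm v) = \int_{\mathbb R^3}\bigl(\nabla\times(-\Delta)^{-1}\bm b[\bm v]\bigr)\cdot\bm b[\bm v]\,d\bm x,
\]
so that $\mathbb H(\bm v) = (4\pi)^2\,H(\bm m)$ whenever $|\bm m|=1$, by the Whitehead integral formula. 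Decompose $\bm b = \bm b_2 + \bm b_3$, where $\bm b_2$ is the $\bm{\hat e}_3$-term (homogeneous of degree $2$ in $\bm v$, coming from a bounded \emph{bilinear} map) and $\bm b_3$ is the $\bm v$-term (homogeneous of degree $3$, from a bounded \emph{trilinear} map).

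For the estimates I would use that $s>3/2$ implies $H^s(\mathbb R^3)\hookrightarrow L^\infty$ and, since $s-1>1/2$, $H^{s-1}(\mathbb R^3)\hookrightarrow L^3$; hence $\bm v\in L^\infty$ and $\nabla\bm v\in L^3$. By H\"older, both $\bm b_2$ and $\bm b_3$ map $H^s\times\cdots\times H^s$ boundedly into $L^{3/2}$, so $\bm b[\bm v]\in L^{3/2}$ with $\|\bm b[\bm v]\|_{L^{3/2}}\lesssim\|\bm v\|_{H^s}^2+\|\bm v\|_{H^s}^3$. Moreover $\nabla\cdot\bm b=0$ in $\mathscr D'(\mathbb R^3)$ — for $H^s$ fields this follows from the smooth case by density together with the continuity of $\bm v\mapsto\bm b[\bm v]$ into $L^{3/2}$, or directly from the vanishing of the Jacobian determinant of an $\mathbb S^2$-valued map — so $\bm a[\bm v]:=\nabla\times(-\Delta)^{-1}\bm b[\bm v]$ is the Coulomb-gauge potential, $\nabla\times\bm a[\bm v]=\bm b[\bm v]$. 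The operator $\nabla\times(-\Delta)^{-1}$ is, up to Riesz transforms, the Riesz potential of order $1$, hence maps $L^{3/2}(\mathbb R^3)$ boundedly into $L^3(\mathbb R^3)$ by Hardy--Littlewood--Sobolev; consequently $\bm a[\bm v]\in L^3$, the maps $\bm a_j:=\nabla\times(-\Delta)^{-1}\bm b_j$ are bounded multilinear into $L^3$, and $\bm a[\bm v]\cdot\bm b[\bm v]\in L^1$ by H\"older with exponents $3$ and $3/2$.

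Expanding $\bm a\cdot\bm b=(\bm a_2+\bm a_3)\cdot(\bm b_2+\bm b_3)$ under the integral sign exhibits $\mathbb H$ as a sum of bounded multilinear forms on $H^s(\mathbb R^3;\mathbb R^3)$ that are homogeneous of degrees $4$, $5$ and $6$; thus $\mathbb H$ is a continuous polynomial of degree $\le 6$, hence real-analytic — in particular $C^\infty$ — with Fr\'echet derivatives obtained by polarization. Since $H=(4\pi)^{-2}\mathbb H(\,\cdot-\bm{\hat e}_3\,)$ is $C^\infty$ on the affine space $\bm{\hat e}_3+H^s(\mathbb R^3;\mathbb R^3)$, its restriction to the closed submanifold $H^s(\mathbb R^3;\mathbb S^2)$ (whose tangent space at $\bm m$ is $\{\bm\phi\in H^s:\bm\phi\cdot\bm m=0\text{ a.e.}\}$) is $C^\infty$. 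In particular $H$ is continuous there; combined with $\bm m\in C([0,\infty);H^s(\mathbb R^3;\mathbb S^2))$ for any $3/2<s<2$ from Theorem~\ref{thm:glavni} and the integrality of $H$, this yields that $t\mapsto H(\bm m(t))$ is continuous and hence constant, as needed for the remark following Theorem~\ref{thm:glavni}.

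The only real work is the pair of estimates placing $\bm b[\bm v]$ in $L^{3/2}$ and $\bm a[\bm v]$ in $L^3$: the former rests on the sharp embedding $H^{s-1}(\mathbb R^3)\hookrightarrow L^3(\mathbb R^3)$ and, for the cubic term $\bm v\cdot(\partial\bm v\times\partial\bm v)$, on $\bm v\in L^\infty$ — both of which genuinely require $s>3/2$ — while the latter is the mapping property of $\nabla(-\Delta)^{-1}$. Once $\bm b$ and $\bm a$ are placed in these dual Lebesgue spaces the smoothness is purely formal, since the helicity functional is literally a polynomial in $\bm v$; at the endpoint $s=3/2$ the embedding becomes critical and the argument no longer closes, consistent with the strict inequality in the statement.
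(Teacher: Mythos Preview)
Your proof is correct and follows essentially the same route as the paper's: both bound $\bm b$ in a Lebesgue space via Sobolev embedding for $\nabla \bm m$ and then use the mapping properties of the Biot--Savart (order-one Riesz potential) operator to place $\bm a$ in the dual space, so that $\bm a \cdot \bm b \in L^1$. The only differences are cosmetic: the paper works with $s$-dependent exponents, namely $\bm b \in L^1 \cap L^{3/(5-2s)}$ and $\bm a \in L^p$ for $3/2 < p < 3/(4-2s)$, rather than your fixed $L^{3/2}$--$L^3$ pairing, and it leaves implicit the polynomial-in-$\bm v$ structure that you spell out to justify the $C^\infty$ claim.
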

\begin{proof}It is clear that $H(\bm m)$ is independent of the special choice of $\bm a$.
By translation invariance and Sobolev embedding
\begin{align*}
|\bm b| \lesssim  |\nabla \bm m|^2 \in L^1 \cap L^{\frac{3}{5-2s}}(\mathbb{R}^3).
\end{align*}
By the properties of the Biot-Savart operator $\bm b \mapsto \bm a$ given by the singular integral
\begin{align*}
\bm a(\bm x) = \int_{\mathbb{R}^3} \frac{\bm b(\bm y) \times (\bm x -\bm y)}{|\bm x - \bm y|^3} d \bm y
\end{align*}
we obtain $|\bm a| \in L^p(\mathbb{R}^3)$ for all 
$3/2< p < \frac{3}{2(2-s)}$ which is larger than the dual exponent of $\frac{3}{5-2s}$, and the claim follows.
\end{proof}

\section{Uniqueness for the LLG equation}\label{section:uniqueness}

In this section, we prove Theorem \ref{thm:uniqueness}, i.e., uniqueness for
weak solutions to the LLG equation \eqref{eq:LLG} for the fixed current
$\bm j \in L^\infty((0,\infty); L^{6/5}(\mathbb{R}^3;\mathbb{R}^3))$. In particular, since in \eqref{eq:jregularity} we have $\ell>6/5$, the result holds for the solution given by Theorem \ref{thm:glavni}.

 \begin{proof}[\textbf{Proof of Theorem \ref{thm:uniqueness}}]We start from a weak solution to the LLG equation \eqref{eq:LLG}, i.e. the solution $\bm m$ satisfies
\begin{align*}
    \langle \partial_t \bm m, \bm v \rangle &= \alpha \langle \bm m \times \partial_t \bm m, \bm v \rangle + \langle \bm m \times (\Delta \bm m - h\, \bm{\hat {e}}_3), \bm v\rangle + \langle\bm m \times \Delta \bm m, \Delta \bm v\rangle \\
    \ &\phantom{{} =  }+ 2 \sum_{k=1}^3 \langle \partial_k \bm m \times \Delta \bm m , \partial_k \bm v \rangle - \langle (\bm j \cdot \nabla ) \bm m, \bm v\rangle \, , 
\end{align*}
for almost every $t \in [0,\infty)$ and for all $\bm v \in H^2(\mathbb R^3;\mathbb R^3).$ Since $H^2(\mathbb R^3)$ is closed under pointwise multiplication $\bm v \times \bm m \in H^2(\mathbb R^3;\mathbb R^3)$ is a valid test function and using the identity $\langle \bm m \times \partial_t \bm m ,\bm v \rangle = \langle \partial_t \bm m , \bm v \times \bm m\rangle$ we can pass to the Landau-Lifshitz formulation
\begin{align}\label{eq:LLGpravioblikweakform}
    (1+\alpha^2) \langle \partial_t \bm m, \bm v\rangle +\langle A(\bm m )\Delta^2 \bm m , \bm v \rangle = \langle A(\bm m) \bm f , \bm v \rangle -\alpha \langle \Lambda \bm m , \bm v \rangle  \, ,
\end{align}
where we use the following convenient notations
\begin{gather*}
    \langle \Delta^2 \bm m , \bm v \rangle \defeq \langle \Delta \bm m, \Delta \bm v \rangle \, , \quad \langle \bm m \times \Delta^2 \bm m , \bm v \rangle  \defeq \langle \Delta \bm v \times \bm m , \Delta \bm m \rangle + 2\sum_{k=1}^3 \langle \partial_k \bm v \times \partial_k \bm m, \Delta \bm m \rangle \, , \\
    \langle \Lambda \bm m , \bm v \rangle = - \langle (\bm m \cdot \Delta^2 \bm m ) \bm m , \bm v \rangle \defeq - \big\langle |\Delta \bm m |^2 \bm m , \bm v \big\rangle  + 2 \,\big\langle D\bm m \otimes D \bm m , D^2(\bm v \cdot \bm m )\big\rangle \, .
\end{gather*}
where $\left \langle  D \bm m \otimes D \bm m , D^2(\bm v \cdot \bm m) \right \rangle $ stands for $\sum_{i,j} \left \langle \partial _i \bm m \cdot \partial_j \bm m , \partial_{ij}^2 (\bm v \cdot \bm m ) \right \rangle $. Details can be found in \cite{ChugreevaMelcher2017}. We now assume there exist two distinct solutions to the LLG equation \eqref{eq:LLG}, $\bm m_1$ and $\bm m_2$. We can then subtract the two equations of form \eqref{eq:LLGpravioblikweakform} and integrate in time. We choose the test function to be
\begin{equation*}
    \bm v (t) = \bm m_1 (t) - \bm m_2 (t) \, ,
\end{equation*}
and get
\begingroup
\allowdisplaybreaks
\begin{subequations}
\begin{align}
    \frac{1+\alpha^2}{2}\| \bm v (T)\|^2_{L^2} = &-\int_0^T \alpha \, \langle \Delta \bm v, \Delta \bm v \rangle \, dt \label{eq:subeqa} \\
    &+ \int_0^T \alpha \, \big\langle  |\Delta \bm m_1|^2 \bm m_1 - |\Delta \bm m_2|^2 \bm m_2, \bm v \big\rangle \, dt \\
    &- \int_0^T \alpha \, \big\langle D \bm m_1 \otimes D \bm m_1 , D^2 (\bm m_1 \cdot \bm v) \big \rangle \, dt \\
    &+\int_0^T \alpha \, \big\langle D \bm m_2 \otimes D \bm m_2 , D^2 (\bm m_2 \cdot \bm v) \big \rangle \, dt \\
    &+ \int_0^T \langle \Delta \bm m_1 , \bm m_1 \times \Delta \bm v \rangle \, dt - \int_0^T \langle \Delta \bm m_2, \bm m_2 \times \Delta \bm v \rangle \, dt \\
    &+ 2\, \sum_{k=1}^3 \left [ \int_0^T  \langle \Delta \bm m_1 ,  \partial_k \bm m_1 \times \partial_k \bm v \rangle \, dt - \int_0^T  \langle \Delta \bm m_2 , \partial_k \bm m_2 \times \partial_k \bm v \rangle \, dt \right ] \\
    &+ \int_0^T \alpha \, \langle \bm m_1 \times \bm m_1 \times \Delta \bm m_1 - \bm m_2 \times \bm m_2 \times \Delta \bm m_2 , \bm v \rangle \, dt \\
    &+\int_0^T \langle \bm m_1 \times \Delta \bm m_1 - \bm m_2 \times \Delta \bm m_2 , \bm v \rangle \, dt \, \label{eq:subeqh} \\
    &- \int_0^T \alpha \, \langle \bm m_1 \times \bm m_1 \times h \, \bm{\hat {e}}_3) - \bm m_2 \times \bm m_2 \times h \, \bm{\hat {e}}_3) , \bm v \rangle \, dt \label{eq:subeqi} \\
    &- \int_0^T \left \langle \bm m_1 \times h \, \bm{\hat {e}}_3) - \bm m_2 \times h \, \bm{\hat {e}}_3) ,\bm v \right \rangle \, dt \label{eq:subeqj}\\
    &- \int_0^T \alpha \, \langle \bm m_1 \times (\bm j \cdot \nabla) \bm m_1 - \bm m_2 \times (\bm j \cdot \nabla) \bm m_2 , \bm v \rangle  \, dt \label{eq:subeqk}\\
    &- \int_0^T \langle (\bm j \cdot \nabla) \bm v , \bm v \rangle \, dt. \label{eq:subeql}
\end{align}
\end{subequations}
\endgroup
The estimates concerning the terms \eqref{eq:subeqa}-\eqref{eq:subeqh} were obtained in \cite{ChugreevaMelcher2017}. We estimate \eqref{eq:subeqi} by
\begin{align*}
    \langle \bm m_1 \times \bm m_1 \times h \, \bm{\hat {e}}_3) - \bm m_2 \times \bm m_2 \times h \, \bm{\hat {e}}_3) , \bm v \rangle &= h\, \big\langle (\bm m_1 \cdot \bm{\hat {e}}_3) \bm v + (\bm v \cdot \bm{\hat {e}}_3) \bm m_2, \bm v\big\rangle \leq C \, \|\bm v\|_{L^2}^2 \, .
\end{align*}
Estimate for \eqref{eq:subeqj} goes similarly. It remains to treat the terms \eqref{eq:subeqk} and \eqref{eq:subeql}.
\begin{align*}
    \langle \bm m_1 \times (\bm j \cdot \nabla) \bm m_1 - \bm m_2 \times (\bm j \cdot \nabla) \bm m_2 , \bm v\rangle &= \langle \bm m_2 \times (\bm j \cdot \nabla) \bm v, \bm v\rangle \\
    & \leq  \| \bm j \|_{L^{6/5}} \| \nabla \bm v \|_{L^6} \| \bm v \|_{L^{\infty}} \\
    &\leq \| \bm j \|_{L^{6/5}} \| \bm v \|_{H^2}^{7/4} \| \bm v \|_{L^2}^{1/4} \, .
\end{align*}
%
The last term \eqref{eq:subeql} is estimated in the same way. We therefore get
\begin{align*}
    \frac{1}{2} \| \bm v(T) \|_{L^2}^2 + \lambda \int_0^T \| \Delta \bm v \|_{L^2}^2 \, dt \leq C \int_0^T R \big(\|\bm v \|_{L^2}, \|\bm v\|_{H^2} \big) \, dt \, ,
\end{align*}
where the function $R(a,b)$ is given by
\begin{align*}
    R(a,b)= a^2 + ab+a^{1/4} b^{7/4}+ a^{1/2} b^{3/2} \, .
\end{align*}
Using Young's inequality and absorbing the $\|\Delta \bm v \|_{L^2}$ terms on the left-hand side we obtain
\begin{align*}
     \| \bm v(T) \|_{L^2}^2 + \lambda \int_0^T \| \Delta \bm v \|_{L^2}^2 \, dt \leq C \int_0^T \|\bm v \|_{L^2}^2 \, dt \, .
\end{align*}
We conclude with the Gronwall's lemma that $\|\bm v(T)\|^2_{L^2}=0$ for all $T\in [0,\infty)$.
\end{proof}
\paragraph{Acknowledgments.} We would like to thank Martin Frank for numerous helpful and stimulating discussions on transport equations. 
This work was funded by the Deutsche Forschungsgemeinschaft (DFG)
RTG 2326 \textit{Energy, Entropy, and Dissipative Dynamics}.
\bibliographystyle{siam}
\bibliography{references}

\end{document}